\newtheorem{theorem}{Theorem}[section]
\newtheorem{corollary}[theorem]{Corollary}
\newtheorem{lemma}[theorem]{Lemma}
\theoremstyle{definition}
\newtheorem{definition}[theorem]{Definition}
\theoremstyle{remark}
\newtheorem{remark}[theorem]{Remark}
\theoremstyle{definition}
\newtheorem{example}[theorem]{Example}
\title{Local Complete Segal Spaces}
\author{Nicholas Meadows}
\begin{document}

\maketitle

\section{Introduction}
\label{intro}
The purpose of this paper is to develop a model structure on bi-simplicial presheaves in which the weak equivalences are stalkwise equivalences in the complete Segal model structure on bi-simplicial sets, and show that it is Quillen equivalent to the local Joyal model structure on simplicial presheaves of \cite{Nick}. The existence of the local complete Segal model structure was conjectured in \cite[Section 1.3]{Rezk}. The technique of Boolean localization is used extensively to develop this model structure (c.f. \cite{local} and \cite{Nick}).

This is the second in a series of three papers, including \cite{Nick} and \cite{Nick3}, which establish local analogues of three of the main extant models of higher category theory and establish a series of Quillen equivalences connecting them. The long-term objectives of this project are to apply these results to Simpson's theory of higher stacks, as discussed in \cite{Simpson-Descent}, and study variants of non-abelian cohomology (c.f. \cite[Section 5]{Nick3} for an initial thrust in this direction). As such, we have modelled our approach to local higher category theory on \cite{local}; this book contains numerous applications of local homotopy theory to geometric phenomena. 

In Section \ref{notation}, we establish notational and terminology conventions. In Section \ref{Segal} of the paper, we review some properties of the complete Segal model structure, as well as describe a variety of Quillen adjunctions between the complete Segal model structure, Joyal model structure, and standard model structure. These results are necessary for establishing the main results of the paper. We refer to \cite{Lurie} for facts about the Joyal model structure.  

In Section \ref{newmodel}, we define the local complete Segal model structure as the Bousfield localization of the Reedy model structure for bi-simplicial presheaves along the constant bisimplical presheaf maps $G(n) \subset F(n)$, $F(0) \subset I$. Using the technique of fibred sites (c.f. \cite{fibred-sites}), we can identify the Reedy model structure for bi-simplicial presheaves with the injective (Jardine) model structure on simplicial presheaves. Thus, we can use the localization theory of simplicial presheaves of \cite[Chapter 7]{local} to construct the local complete Segal model structure. This approach was chosen because it makes the fibrant objects, as well as descent, easy to describe (c.f. \ref{thm4.7}). 

In Section \ref{equiv}, we establish the main result of this paper: the Quillen equivalence between the local Joyal model structure and the local complete Segal model structure. 

In the Section \ref{descent}, we establish a result which relates descent in the local Joyal model structure to descent in the injective model structure. Interestingly, this result is proven using the Quillen equivalence established in Section \ref{equiv}. In addition, the description of the local complete Segal model structure as a Bousfield localization is key here.

\section{Notational Conventions}\label{notation}

For any category $\mathcal{C}$ we write $hom_{\mathcal{C}}(X, Y)$ for the set of morphisms between two $X, Y \in Ob(\mathcal{C})$. If the category is unambiguous we omit the subscript $\mathcal{C}$. We write $Iso(\mathcal{C})$ for the subcategory of $\mathcal{C}$ consisting of isomorphisms. If $\mathcal{C}$ is small, we write $B(\mathcal{C})$ for the nerve of the category. Given a simplicial set $K$, write $\pi(K)$ for the fundamental groupoid of $K$. Let $s\textbf{Set}$ denote the category of simplicial sets. Let $s^{2}\textbf{Set}$ denote the category of bi-simplicial sets.  We write $\textbf{hom}(X, Y)$ for the standard mapping complexes in both simplicial sets and bi-simplicial sets.

bi-simplicial sets are functors $X: \Delta^{op} \times \Delta^{op} \rightarrow Set$. We write $X_{m,n}$ for $X(m, n)$. We refer to $X_{m,n}$ as the $(m,n)$ bisimplices of $X$. Given simplicial sets $K$ and $L$, we can define a bi-simplicial set with $K \tilde{\times} L$ so that $(K \tilde{\times} L)_{m,n} = K_{m} \times L_{n}$. We write $\Delta^{p,q}$ for $\Delta^{p} \tilde{\times} \Delta^{q}$. 

In sections \ref{newmodel}- \ref{descent}, we fix a Grothendieck site $\mathscr{C}$. We denote the simplicial sheaves (respectively bi-simplicial sheaves) on $\mathscr{C}$ by $s\textbf{Sh}(\mathscr{C})$ (respectively $s^{2}\textbf{Sh}(\mathscr{C})$). We denote by $s\textbf{Pre}(\mathscr{C})$ (respectively $s^{2} \textbf{Pre}(\mathscr{C})$) the set of simplicial presheaves on $\mathscr{C}$ (respectively bi-simplicial presheaves). We also choose a Boolean localization $p: Sh(\mathscr{B}) \rightarrow Sh(\mathscr{C})$. Boolean localization is described in detail in \cite[Chapter 3 and 4]{local}. However, most of the facts we need are reviewed in \cite[Section 2]{Nick}. The \textbf{injective model structure} on $s\textbf{Pre}(\mathscr{C})$ is the standard model structure in which the cofibrations are monomorphisms and the weak equivalences are 'stalkwise' weak equivalences in the standard model structure on simplicial sets. We call its weak equivalences \textbf{local weak equivalences}. We call its fibrations \textbf{injective fibrations}. We define for a simplicial set $K$ a functor $hom(K, -) : s\textbf{Pre}(\mathscr{C}) \rightarrow \textbf{Pre}(\mathscr{C})$ by $hom(K, X)(U) = hom(K, X(U))$. For a simplicial set $K$ and a simplicial presheaf $X$, we write $X^{K}$ for the simplicial presheaf defined by $U \mapsto \textbf{hom}(K, X(U))$. Given a simplicial set $K$, we write $K$ for the constant simplicial presheaf $U \mapsto K$. The injective model structure has a function complex $\textbf{hom}(X, Y)$ with n-simplices defined by $\textbf{hom}(X, Y)_{n} = hom(X \times \Delta^{n}, Y)$. We also use the local Joyal model structure of \cite[Theorem 3.3]{Nick}. We call its weak equivalences \textbf{local Joyal equivalences} and its fibrations \textbf{quasi-injective fibrations}. We denote the sheafification functor for both simplicial and bi-simplicial presheaves by $L^{2}$.

\section{Complete Segal Spaces}\label{Segal}

\begin{definition}\label{def1.1}
Write $F(k) = \Delta^{k, 0} = \Delta^{k} \tilde{\times} \Delta^{0}$, and $\hat{F}(k) = \partial \Delta^{k} \tilde{\times} \Delta^{0}$. 
\end{definition}

\begin{definition}\label{def1.2}
Given a category C, its \textbf{discrete nerve}, $Disc(C)$, is defined to be the bi-simplicial set $B(C) \tilde{\times} \Delta^{0}$. We write $I = Disc(B\pi(\Delta^{1}))$. If $\textbf{n}$ is the ordinal number category, $Disc(\textbf{n})  = \Delta^{n} \tilde{\times} \Delta^{0} = F(n)$. Thus, there is a map $F(0) \rightarrow I$ induced by the inclusion of the initial vertex $\textbf{0} \subset \pi(\Delta^{1})$. 
\end{definition}

\begin{remark}\label{rmk1.3}
 We will identify $s\textbf{Set}$ with  a subcategory of $s^{2}\textbf{Set}$ via the embedding $K \mapsto \Delta^{0} \tilde{\times} K$. 
\end{remark}

\begin{definition}\label{def1.4}
Let $G(n)$ be the glued together string of 1-simplices $1 \le 2 \le \cdots \le n$ inside $\Delta^{n}$ regarded as a vertically discrete bi-simplicial set. Thus, there are natural inclusions $G(n) \subset F(n)$. 
\end{definition}

\begin{remark}\label{rmk1.5}
 Note that for a bi-simplicial set $X$, $\textbf{hom}(F(k), X) \cong X_{k}$, the vertical simplicial set in horizontal degree $k$ since  
\begin{equation*}
\begin{array}{rcl}
\textbf{hom}(\Delta^{k} \tilde{\times} \Delta^{0}, X)_{n} & \cong & hom(\Delta^{k} \tilde{\times} (\Delta^{0} \times \Delta^{n}), X)\\
& = & hom(\Delta^{k, n}, X ) \\
& \cong & X_{k, n}
\end{array}
\end{equation*}
Note that this implies that $\textbf{hom}(F(n), X) \rightarrow \textbf{hom}(G(n), X)$ can be identified with the map 
\begin{equation*}
X_{n*} \rightarrow X_{1*} \times_{X_{0*}} X_{1*} \cdots \times_{X_{0*}} X_{1*}  
\end{equation*}
where the right hand side is the limit of the diagram
\begin{equation*}
X_{1*} \xrightarrow{d_{1}} X_{0*} \xleftarrow{d_{0}} X_{1*} ....
\end{equation*}
constructed from n copies of $X_{1*}$. 
\end{remark}

\begin{example}\label{exam1.6}
The \textbf{Reedy model structure} on $s^{2}\textbf{Set}$ has cofibrations which are levelwise monomorphisms and weak equivalences which are levelwise weak equivalences. The generating cofibrations for the Reedy model structure are of the form

\begin{equation*} 
\partial(\Delta^{n} \tilde \times \Delta^{k}) = (\partial \Delta^{n} \tilde{\times} \Delta^{k}) \cup (\Delta^{n} \tilde{\times} \partial \Delta^{k}) \subset \Delta^{n} \tilde{\times} \Delta^{k}
\end{equation*} 
for $k, n \in \mathbb{N}$. The generating trivial cofibrations are of the form
\begin{equation*}
(\Delta^{k} \tilde{\times} \Delta^{n}) \cup (\Delta^{k} \tilde{\times} \Lambda_{r}^{n}) \subset \Delta^{k} \tilde{\times} \Delta^{n}
\end{equation*}
where $0 \le r \le n$.
\end{example}

\begin{definition}\label{def1.7}
 The \textbf{complete Segal model structure} is the left Bousfield localization of the Reedy model structure on $s^{2}\textbf{Set}$ along the set of maps  $ G(n) \subset F(n)$, $n \in \mathbb{N}$, and the natural inclusion $F(0) \rightarrow I$, where $I$ is as in \ref{def1.2}. The fibrant objects of this model category are called \textbf{complete Segal spaces}. 
\end{definition}

The complete Segal model structure first appeared in \cite{Rezk}.

\begin{example}\label{exam1.8}
If S is some set of maps in a simplicial model category, we say that $X$ is \textbf{S-local} if and only if X is fibrant and for each $g \in S, g^{*} : \textbf{hom}(D, X) \rightarrow \textbf{hom}(C, X)$ is a weak equivalence. By \cite[Theorem 4.1.1]{Hirschorn}, an object of $X$ is fibrant for the model structure of \ref{def1.7} if and only if it is fibrant in the Reedy model structure and it is S-local, where S is the set of maps in \ref{def1.7}. 
\end{example}

\begin{definition}\label{def1.9}
There are adjoint functors \begin{equation*}
k_{!} : s\textbf{Set} \leftrightarrows s\textbf{Set} : k^{!}
\end{equation*}
where 
$k_{!}(X) = \underset{\underset{\Delta^{n} \rightarrow X}{\longrightarrow}}{lim}(B\pi(\Delta^{n}))$
and $k^{!}(X)_{n} = hom(B\pi(\Delta^{n}), X)$
\end{definition}

The inclusion $\Delta^{n} \rightarrow B\pi(\Delta^{n})$ give a natural map $X \rightarrow k_{!}(X)$. Since $k^{!}(X)_{n} = hom(B\pi (\Delta^{n}), X)$, the inclusion $\Delta^{n} \rightarrow B\pi(\Delta^{n})$ induces natural maps $k^{!}(X)_{n} = hom(B \pi (\Delta^{n}), X) \rightarrow hom(\Delta^{n}, X) = X_{n}$, and hence induces a simplicial set map $k^{!}(X) \rightarrow X$.

\begin{lemma}\label{lem1.10}
The functor $k_{!}$ preserves monomorphisms and the natural map $X \rightarrow k_{!}(X)$ is a weak equivalence for simplicial sets. 
\end{lemma}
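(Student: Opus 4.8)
The plan is to first pin down the cosimplicial object at work and then treat the two assertions in turn. For each $n$ the simplicial set $B\pi(\Delta^{n})$ is the nerve of the fundamental groupoid of the contractible complex $\Delta^{n}$, which is the indiscrete groupoid on the vertex set $\{0,\dots,n\}$; write $E[n] := B\pi(\Delta^{n})$, so that its $m$-simplices are all functions $\{0,\dots,m\} \to \{0,\dots,n\}$. In particular $E[n]$ is an explicitly contractible Kan complex, and the canonical map $\Delta^{n} \to B\pi(\Delta^{n})$ is a weak equivalence. Being a left adjoint, $k_{!}$ preserves all colimits, and $k_{!}(\Delta^{n}) = B\pi(\Delta^{n})$; the natural map $X \to k_{!}(X)$ is the one induced on the co-Yoneda presentations $X = \varinjlim_{\Delta^{n} \to X} \Delta^{n}$ and $k_{!}(X) = \varinjlim_{\Delta^{n} \to X} E[n]$ by the maps $\Delta^{n} \to E[n]$.

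For the first assertion I would reduce to the generating cofibrations. Since $k_{!}$ preserves coproducts, pushouts, transfinite compositions and retracts, and since the monomorphisms of $s\textbf{Set}$ form the smallest class containing the boundary inclusions $\partial\Delta^{n} \subset \Delta^{n}$ and closed under these operations (monomorphisms being stable under pushout, coproduct and transfinite composition in the topos $s\textbf{Set}$), it suffices to check that $k_{!}(\partial\Delta^{n}) \to k_{!}(\Delta^{n}) = E[n]$ is a monomorphism. Writing $\partial\Delta^{n}$ as the colimit of its faces $\Delta^{S}$ over the poset of nonempty proper subsets $S \subsetneq \{0,\dots,n\}$ and applying $k_{!}$, this map becomes the canonical comparison $\varinjlim_{S} E[S] \to E[n]$. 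I expect this to be the main obstacle, and I would settle it by a direct check: each $E[S]$ is the subobject of $E[n]$ consisting of the simplices with entries in $S$, and for nonempty $S,S'$ one has $E[S] \cap E[S'] = E[S\cap S']$ inside $E[n]$ while for $S\cap S' = \emptyset$ the images are disjoint. Hence the comparison is injective, which amounts to saying that $E[\bullet]$ is Reedy cofibrant.

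For the second assertion I would argue by induction on skeleta, using the standard properties of the Kan--Quillen model structure. The map $\mathrm{sk}_{0}X \to k_{!}(\mathrm{sk}_{0}X)$ is an isomorphism, since $E[0] = \Delta^{0}$ and $k_{!}$ preserves coproducts. For the inductive step I would present $\mathrm{sk}_{n}X$ as the pushout of $\mathrm{sk}_{n-1}X \leftarrow \coprod \partial\Delta^{n} \to \coprod \Delta^{n}$ along the nondegenerate $n$-simplices, and apply $k_{!}$ to obtain the corresponding pushout of $k_{!}(\mathrm{sk}_{n-1}X) \leftarrow \coprod k_{!}(\partial\Delta^{n}) \to \coprod E[n]$. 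The natural map $X \to k_{!}(X)$ furnishes a morphism between these two pushout squares whose components on $\coprod \Delta^{n}$ (base case), on $\coprod \partial\Delta^{n}$ and on $\mathrm{sk}_{n-1}X$ (inductive hypothesis, using $\partial\Delta^{n} = \mathrm{sk}_{n-1}\partial\Delta^{n}$) are weak equivalences, while the attaching legs and their $k_{!}$-images are cofibrations, the latter by the first assertion. The gluing lemma then yields that $\mathrm{sk}_{n}X \to k_{!}(\mathrm{sk}_{n}X)$ is a weak equivalence. Finally, passing to the filtered colimit over $n$ and using that $k_{!}$ commutes with it and that filtered colimits of weak equivalences along monomorphisms are weak equivalences shows that $X \to k_{!}(X)$ is a weak equivalence.
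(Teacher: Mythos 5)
Your proof is correct and takes essentially the same route as the paper's: monomorphism preservation is reduced via saturation to the boundary inclusions, with $k_{!}(\partial\Delta^{n})$ identified as a union of subcomplexes of $B\pi(\Delta^{n})$ (the paper presents this union as a coequalizer over codimension-one and codimension-two faces, you as a colimit over the face poset), and the weak equivalence is proved by skeletal induction using the gluing lemma followed by a filtered-colimit argument (the paper filters by finite subcomplexes, you by the skeletal filtration). These differences are purely organizational; both arguments rest on the same key facts, namely that intersections of the subcomplexes $E[S] \subseteq E[n]$ are again of this form and that $\Delta^{n} \rightarrow B\pi(\Delta^{n})$ is a weak equivalence.
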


\begin{proof}
The fundamental groupoid functor takes pullbacks 
\begin{equation*}
\xymatrix{
\Delta^{n-2} \ar[r] \ar[d] & \Delta^{n-1} \ar[d]_{d_{i}} \\
\Delta^{n-1} \ar[r]_{d_{j}} & \Delta^{n}
}
\end{equation*}
to pullbacks. All maps $B\pi(\Delta^{n-1}) \rightarrow B\pi(\Delta^{n})$ are monomorphisms and 
there is a coequalizer diagram
\begin{equation*}
\coprod_{i < j} B\pi(\Delta^{n-2}) \rightarrow \coprod_{0 \le i \le n}B\pi(\Delta^{n-1}) \rightarrow C
\end{equation*}
where C is the union of the images in $B\pi(\Delta^{n})$. The functor $k_{!}$ preserves coequalizers so that $C \cong k_{!}(\partial \Delta^{n})$ and the induced map 
\begin{equation*}
k_{!}(\partial \Delta^{n}) \rightarrow k_{!}(\Delta^{n}) = B\pi(\Delta^{n})
\end{equation*}
is a monomorphism. The monomorphisms are the saturation of the inclusions $\partial \Delta^{n} \subset \Delta^{n}$. Since $k_{!}$ preserves colimits, it follows that $k_{!}$ preserves monomorphisms.

We show by induction on n that $X \rightarrow k_{!}(X)$ is a weak equivalence for all n-skeletal finite simplicial sets X. In the case $n=0$ this is trivial. In general, we can obtain $X$ as a finite succession of pushouts 
\begin{equation*}
\xymatrix{
\partial \Delta^{n} \ar[r] \ar[d] & Y \ar[d] \\
\Delta^{n} \ar[r] & Y'
}
\end{equation*}
where $Y \rightarrow k_{!}(Y)$ is a weak equivalence. By the inductive hypothesis $\partial \Delta^{n} \rightarrow k_{!}(\partial \Delta^{n})$ is a weak equivalence. Furthermore, $\Delta^{n} \rightarrow k_{!}(\Delta^{n}) = B\pi(\Delta^{n})$ is a weak equivalence. Thus, by the gluing lemma (\cite[Lemma 2.8.8]{GJ2}), we conclude that $Y' \rightarrow k_{!}(Y')$ is a weak equivalence. 

Let X be an infinite simplicial set. Let $M(X)$ be the set of finite subcomplexes of $X$. We have a commutative diagram
\begin{equation*}
\xymatrix{
\underset{\underset{K \in M(X)}{\longrightarrow}}{lim} K \ar[r] \ar[d]_{\cong}  & \underset{\underset{K \in M(X)}{\longrightarrow}}{lim} k_{!}(K) \ar[d]_{\cong} \\
X \ar[r]  & k_{!}(X)  
}
\end{equation*}
where the top horizontal map is a filtered colimit of weak equivalences. Since weak equivalences are preserved by filtered colimits, the map $X \rightarrow k_{!}(X)$ is a weak equivalence in general. 
\end{proof}

\begin{lemma}\label{lem1.11}
If $X$ is a Kan complex, the canonical map $k^{!}(X) \rightarrow X$ is a trivial Kan fibration of simplicial sets.
If $X$ is a quasi-category, the induced map $k^{!}(X) \rightarrow J(X)$ is a trivial fibration.
\end{lemma}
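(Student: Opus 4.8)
The plan is to prove both statements by recognizing a trivial fibration through its right lifting property against the generating cofibrations $\partial\Delta^n \subset \Delta^n$, and transporting each such lifting problem across the adjunction $k_! \dashv k^!$ of \Cref{def1.9}, where \Cref{lem1.10} does the real work.

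First I would handle the Kan complex case. A square with $\alpha : \partial\Delta^n \to k^!(X)$ over $\beta : \Delta^n \to X$ transposes, using $k^! \dashv k_!$, to a map $\tilde\alpha : k_!(\partial\Delta^n) \to X$; a lift $\Delta^n \to k^!(X)$ transposes to a map $\tilde\gamma : k_!(\Delta^n) = B\pi(\Delta^n) \to X$. Remembering that the canonical map $k^!(X) \to X$ is levelwise restriction along the unit $\eta_n : \Delta^n \to B\pi(\Delta^n)$, the two constraints on the lift become the requirements that $\tilde\gamma$ extend $\tilde\alpha$ along $k_!(\partial\Delta^n) \to B\pi(\Delta^n)$ and extend $\beta$ along $\eta_n$. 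Commutativity of the original square is exactly the compatibility making $(\tilde\alpha,\beta)$ factor through the pushout $P := \Delta^n \cup_{\partial\Delta^n} k_!(\partial\Delta^n)$, so the whole lifting problem becomes the single problem of extending a map $P \to X$ along the canonical map $P \to B\pi(\Delta^n)$.

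Next I would show $P \to B\pi(\Delta^n)$ is a trivial cofibration, whence fibrancy of $X$ produces the extension. For the weak equivalence, $\eta_{\partial\Delta^n} : \partial\Delta^n \to k_!(\partial\Delta^n)$ is a weak equivalence by \Cref{lem1.10}, so its pushout $\Delta^n \to P$ along the cofibration $\partial\Delta^n \subset \Delta^n$ is a weak equivalence by left properness of $s\textbf{Set}$; the composite $\Delta^n \to P \to B\pi(\Delta^n)$ is the unit $\eta_n$, again a weak equivalence by \Cref{lem1.10}, so $P \to B\pi(\Delta^n)$ is a weak equivalence by two-out-of-three. For the cofibration part, since monomorphisms in $s\textbf{Set}$ are the degreewise injections and colimits are computed degreewise, it suffices to check in each degree $k$ that the images of $\Delta^n_k$ and $k_!(\partial\Delta^n)_k$ inside the set $\mathrm{Fun}([k],[n])$ of all functions $[k] \to [n]$ meet precisely in $\partial\Delta^n_k$; I would identify the image of $\Delta^n$ with the monotone maps, the image of $k_!(\partial\Delta^n)$ with the non-surjective maps, and $\partial\Delta^n$ with the monotone non-surjective maps. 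This combinatorial computation is the step I expect to be the main obstacle, since it is the only place the precise shape of $B\pi(\Delta^n)$ is used and the only input that is not formal.

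Finally I would reduce the quasi-category statement to the Kan case. The key point is that any map $B\pi(\Delta^n) \to X$ out of the groupoid-nerve sends every edge to an equivalence of $X$: an edge of $B\pi(\Delta^n)$ extends along $\Delta^1 \to B\pi(\Delta^1)$, and composing with the given map exhibits its image as an equivalence. Hence the composite $\Delta^n \to B\pi(\Delta^n) \to X$ lands in the maximal Kan subcomplex $J(X)$, so the canonical map factors as $\bar p : k^!(X) \to J(X)$, and the same observation gives a natural isomorphism $hom(B\pi(\Delta^n), X) \cong hom(B\pi(\Delta^n), J(X))$, i.e. $k^!(X) \cong k^!(J(X))$, under which $\bar p$ is identified with the canonical map $k^!(J(X)) \to J(X)$. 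Since $J(X)$ is a Kan complex, the first part applies and shows $\bar p$ is a trivial fibration. The only external facts used here are that the core $J(X)$ of a quasi-category is a Kan complex and that equivalences are detected by extension along $\Delta^1 \to B\pi(\Delta^1)$, both standard in the Joyal model structure.
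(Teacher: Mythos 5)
Your proposal is correct and follows essentially the same route as the paper: transpose the lifting problem across $k_! \dashv k^!$, show $\Delta^n \cup_{\partial\Delta^n} k_!(\partial\Delta^n) \to B\pi(\Delta^n)$ is a trivial cofibration using \Cref{lem1.10} together with the pushout/gluing argument and two-out-of-three, and reduce the quasi-category case to the Kan case via the identification $k^!(X) \cong k^!(J(X))$. The only difference is one of detail: you verify the monomorphism claim by an explicit degreewise computation in $B\pi(\Delta^n)$ (monotone maps versus non-surjective maps), a point the paper's ``diagram of monomorphisms'' leaves implicit, while the paper instead cites \cite[Corollary 1.5]{Joyal1} for the factorization through $J(X)$ that you argue by hand.
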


\begin{proof}
The lifting problem 
\begin{equation*}
\xymatrix{
\partial \Delta^{n} \ar[r] \ar[d] & k^{!}(X) \ar[d] \\
\Delta^{n} \ar@{.>}[ur] \ar[r] & X
}
\end{equation*}
is equivalent to a lifting problem 
\begin{equation*}
\xymatrix{
k_{!}(\partial \Delta^{n}) \cup_{\partial \Delta^{n}} \times \Delta^{n} \ar[r] \ar[d] & X \\
k_{!}(\Delta^{n} )\ar@{.>}[ur] & }
\end{equation*}
The diagram of monomorphisms
\begin{equation*}
\xymatrix{
\partial \Delta^{n} \ar[r]^{w.e.} \ar[d] & k_{!}(\partial \Delta^{n})  \ar[d] \ar@/^/[ddr]  & \\
\Delta^{n} \ar[r]^>>>>{w.e.} \ar[rrd]_{w.e.} & k_{!}(\partial \Delta^{n}) \cup_{\partial \Delta^{n}} \times \Delta^{n} \ar[dr] & \\
& & k_{!}(\Delta^{n})
}
\end{equation*}
 shows that $k_{!}(\partial \Delta^{n}) \cup_{\partial \Delta^{n}} \times \Delta^{n} \rightarrow k_{!}(\Delta^{n})$ is a trivial cofibration. Therefore, since X is a Kan complex, the required lift exists. 

For the second statement, note that every map $B\pi(\Delta^{n}) \rightarrow X$ factors through $J(X)$  by  \cite[Corollary 1.5]{Joyal1}. Thus, $k^{!}J(X) \rightarrow k^{!}(X)$ is an isomorphism. The induced map is the diagonal in the diagram 
\begin{equation*}
\xymatrix{
k^{!}(J(X)) \ar[r]_{\cong} \ar[d] & k^{!}(X) \ar[d] \ar[dl] \\ 
J(X)  \ar[r] & X
}
\end{equation*}
where $k^{!}J(X) \rightarrow J(X)$ is a trivial fibration by the first statement. 

\end{proof}

\begin{lemma}\label{lem1.12}
Suppose that $q: X \rightarrow Y$ is a quasi-fibration (i.e. a fibration in the Joyal model structure) and Y is a quasi-category. Then $k^{!}(q)$ is a Kan fibration. 
\end{lemma}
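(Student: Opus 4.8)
The plan is to check the Kan fibration condition through the right lifting property against the horn inclusions $\Lambda^n_r \subset \Delta^n$ (with $n \geq 1$, $0 \le r \le n$) and to transport each lifting problem across the adjunction $k_! \dashv k^!$. Concretely, a commutative square with $\Lambda^n_r \to k^!(X)$ along the top and $\Delta^n \to k^!(Y)$ along the bottom transposes to a commutative square with $k_!(\Lambda^n_r) \to X$ along the top and $k_!(\Delta^n) = B\pi(\Delta^n) \to Y$ along the bottom, and diagonal lifts correspond under the transposition. Hence $k^!(q)$ has the right lifting property against $\Lambda^n_r \subset \Delta^n$ if and only if $q$ has the right lifting property against the induced map $j_n \colon k_!(\Lambda^n_r) \to B\pi(\Delta^n)$. (In contrast with \Cref{lem1.11}, here both corners are transposed, so no relative pushout--product appears.) Since $q$ is a fibration in the Joyal model structure, it has the right lifting property against every Joyal trivial cofibration; so it suffices to prove that each $j_n$ is a Joyal trivial cofibration. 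The only property of $q$ this argument uses is that it is a Joyal fibration.

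That $j_n$ is a monomorphism is immediate from \Cref{lem1.10}, since $k_!$ preserves monomorphisms. For orientation, when $n = 1$ the map $j_1$ is the inclusion $\Delta^0 = k_!(\Lambda^1_r) \to B\pi(\Delta^1)$ of a vertex into the nerve of the free-living isomorphism, and the right lifting property of $q$ against it is exactly part of $q$ being an isofibration.

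The substance is that $j_n$ is a Joyal weak equivalence. I would first observe that it is a weak equivalence in the standard model structure: the horn inclusion $\Lambda^n_r \subset \Delta^n$ is anodyne, hence a standard weak equivalence, and $k_!$ preserves standard weak equivalences by two-out-of-three applied to the naturality square of the natural standard weak equivalence $X \to k_!(X)$ of \Cref{lem1.10}. To promote this to a Joyal weak equivalence I would show that both source and target are Joyal-contractible. The target $B\pi(\Delta^n)$ is the nerve of a contractible groupoid, so it is a weakly contractible Kan complex and therefore Joyal-equivalent to $\Delta^0$. For the source, the decisive point is that every $1$-simplex of each $B\pi(\Delta^k)$ is invertible, and since colimits of simplicial sets are formed degreewise, every edge of $k_!(\Lambda^n_r) = \varinjlim_{\Delta^k \to \Lambda^n_r} B\pi(\Delta^k)$ is an equivalence as well; thus $k_!(\Lambda^n_r)$ is an $\infty$-groupoid. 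As it is moreover weakly contractible (being standardly weakly equivalent to the contractible horn $\Lambda^n_r$), it too is Joyal-equivalent to $\Delta^0$. A map between Joyal-contractible objects is a Joyal weak equivalence, so $j_n$ is a Joyal trivial cofibration and the argument is complete.

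I expect the main obstacle to be exactly this last promotion from a standard to a Joyal weak equivalence, and in particular the verification that $k_!$ takes values in $\infty$-groupoids and that a weakly contractible $\infty$-groupoid is Joyal-contractible (equivalently, that for Kan complexes categorical equivalences and weak homotopy equivalences coincide). The outer horns $r \in \{0, n\}$ are where this is genuinely needed, since for them $\Lambda^n_r \subset \Delta^n$ is not a Joyal equivalence; for the inner horns the inclusion is inner anodyne and the Joyal weak equivalence of $j_n$ follows at once by two-out-of-three with the Joyal weak equivalence $\Delta^n \to B\pi(\Delta^n)$.
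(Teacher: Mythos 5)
You begin exactly as the paper does---transposing the lifting problem across $k_{!} \dashv k^{!}$ so that everything reduces to the single map $j_{n} \colon k_{!}(\Lambda_{r}^{n}) \rightarrow k_{!}(\Delta^{n}) = B\pi(\Delta^{n})$---but from there the two arguments genuinely diverge. The paper never upgrades $j_{n}$ to a Joyal trivial cofibration: it uses only that $j_{n}$ is a trivial cofibration in the \emph{standard} model structure (\Cref{lem1.10} plus two-out-of-three), and instead pushes the transposed square into the cores, observing that any map from $k_{!}(A)$ into a quasi-category factors through $J(X)$ (every simplex of the colimit $k_{!}(A)$ comes from some $B\pi(\Delta^{m})$, and maps $B\pi(\Delta^{m}) \rightarrow X$ land in $J(X)$ by \cite[Corollary 1.5]{Joyal1}); the lift then exists because $J(q)$ is a Kan fibration by \cite[Proposition 4.27]{Joyal-quasi-cat}. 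Your route proves the stronger claim that $j_{n}$ is a Joyal trivial cofibration. That buys something real: by saturation (since $k_{!}$ preserves colimits and monomorphisms) it shows $k_{!}$ carries all anodyne maps to Joyal trivial cofibrations, which is essentially \Cref{cor1.14}, and it renders the hypothesis that $Y$ be a quasi-category superfluous. But note that the paper derives \Cref{cor1.14} \emph{from} \Cref{lem1.12} (via \Cref{lem1.13}), so your argument inverts the paper's logical order and must source its key input from the literature rather than from anything downstream in this paper.

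That key input is larger than your parenthetical admits, and this is where your proposal has a genuine (though fillable) gap. The simplicial set $k_{!}(\Lambda_{r}^{n})$ is not a Kan complex---it is not even a quasi-category, being a colimit of nerves of groupoids with no reason to admit fillers---so ``for Kan complexes categorical and weak homotopy equivalences coincide'' does not apply to it. What you actually need is: a weakly contractible simplicial set whose fundamental category is a groupoid is Joyal-contractible. This is true, but the proof passes to a Joyal fibrant replacement $A \rightarrow \hat{A}$, notes that the fundamental category of $\hat{A}$ is still a groupoid, and then invokes Joyal's theorem that a quasi-category whose homotopy category is a groupoid is a Kan complex (cf.\ \cite{Joyal1}), after which your weak-contractibility argument closes the loop; so the gap is bridged by a citation of the same weight as those the paper itself leans on. Separately, your inner-horn shortcut is simply false: $\Delta^{n} \rightarrow B\pi(\Delta^{n})$ is \emph{not} a Joyal weak equivalence for $n \geq 1$, since the fundamental category of the source is the poset $[n]$ while that of the target is a contractible groupoid; indeed, if the unit $X \rightarrow k_{!}(X)$ were a Joyal equivalence, \Cref{lem1.13} would be trivial. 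Fortunately your main argument treats inner and outer horns uniformly, so deleting that remark costs nothing.
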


\begin{proof}
 All horn inclusions $\Lambda_{k}^{n} \rightarrow \Delta^{n}$ induce trivial cofibrations by \ref{lem1.10}.
Every diagram 
\begin{equation*}
\xymatrix{
k_{!}(\Lambda_{k}^{n}) \ar[r] \ar[d]_{i_{*}} & X \ar[d] \\
k_{!}(\Delta^{n}) \ar[r] & Y
}
\end{equation*}
can be refined to a diagram
\begin{equation*}
\xymatrix{
k_{!}(\Lambda_{k}^{n}) \ar[r] \ar[d]_{i_{*}} & J(X) \ar[d]_{J(q)} \ar[r] & X \ar[d]_{q} \\
k_{!}(\Delta^{n}) \ar[r] \ar@{.>}[ur] & J(Y) \ar[r] &  Y
}
\end{equation*}
Since $i_{*}$ is a trivial cofibration, to show that the lifting exists, it suffices to show that $J(q)$ is a Kan fibration (i.e. that $J$ takes quasi-fibrations to Kan fibrations).

Let $f$ be a quasi-fibration. By \cite[Corollary 1.3]{Joyal1}, $J$ preserves inner fibrations of quasi-categories. By the dual of \cite[Proposition 2.1.3.3]{Lurie}, it suffices to show that $J(f)$ is a right fibration, i.e. we want to solve liftings
\begin{equation*}
\xymatrix
{
\Lambda_{n}^{n} \ar[d] \ar[r] & J(X) \ar[d]_{J(f)} \\
\Delta^{n} \ar@{.>}[ur] \ar[r] & J(Y)
}
\end{equation*}
In the case $n = 1$ this follows from \cite[Corollary 2.4.6.5]{Lurie}. In the case $n \ge 2$, it follows from \cite[Remark 2.4.1.4]{Lurie} and \cite[Proposition 2.4.1.5]{Lurie}. 
\end{proof}

\begin{lemma}\label{lem1.13}
 The functor $k_{!}$ takes weak equivalences to Joyal equivalences. 
\end{lemma}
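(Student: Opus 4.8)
The plan is to prove that $k_{!}$ is a left Quillen functor from the standard model structure on $s\textbf{Set}$ to the Joyal model structure, and then to invoke Ken Brown's lemma. Since every simplicial set is cofibrant in the standard model structure, a left Quillen functor automatically sends all standard weak equivalences to weak equivalences of the target, which is exactly the assertion of the lemma. By \Cref{lem1.10} the functor $k_{!}$ already preserves cofibrations, which are the monomorphisms in both model structures, so the entire content of the argument is to show that $k_{!}$ carries standard trivial cofibrations to Joyal trivial cofibrations.

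To do this I would \emph{not} attempt to lift against all Joyal fibrations, and this is where the main obstacle lies: \Cref{lem1.12} only controls $k^{!}$ on fibrations whose base is a quasi-category, so testing against arbitrary Joyal fibrations is unavailable. Instead I would use the general model-categorical fact that a cofibration is a trivial cofibration if and only if it has the left lifting property against every fibration between fibrant objects. (This follows from a short diagram chase: factoring and fibrantly replacing produces a comparison map which, read off in the homotopy category, is simultaneously a split monomorphism and a split epimorphism, hence an isomorphism, forcing the original map to be a weak equivalence.) The fibrant objects of the Joyal structure are precisely the quasi-categories, so the relevant test maps are exactly the quasi-fibrations $p : X \rightarrow Y$ with both $X$ and $Y$ quasi-categories, and these are precisely the maps to which \Cref{lem1.12} applies.

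Now let $j : A \rightarrow B$ be a standard trivial cofibration; then $k_{!}(j)$ is a cofibration by \Cref{lem1.10}. Given a quasi-fibration $p : X \rightarrow Y$ between quasi-categories, the adjunction $k_{!} \dashv k^{!}$ transposes any lifting problem for $k_{!}(j)$ against $p$ into a lifting problem for $j$ against $k^{!}(p)$. By \Cref{lem1.12} the map $k^{!}(p)$ is a Kan fibration, and since $j$ is a trivial cofibration of the standard model structure it has the left lifting property against every Kan fibration; transposing the resulting lift back solves the original problem. Hence $k_{!}(j)$ has the left lifting property against every fibration between Joyal-fibrant objects, so by the criterion above it is a Joyal trivial cofibration.

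This shows that $(k_{!}, k^{!})$ is a Quillen adjunction with $k_{!}$ left Quillen. Finally, by Ken Brown's lemma a left Quillen functor preserves weak equivalences between cofibrant objects, and since all objects are cofibrant in the standard model structure, $k_{!}$ sends every standard weak equivalence to a Joyal equivalence, as required. The only delicate point in the whole argument is the reduction to fibrations between fibrant objects, since it is exactly this reduction that allows \Cref{lem1.12}, stated only for fibrations over a quasi-category, to suffice.
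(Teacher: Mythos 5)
Your proof is correct, but it runs in the opposite logical direction from the paper's. The paper proves \Cref{lem1.13} directly: for a quasi-category $Z$ it applies $k^{!}$ to a Joyal path object $Z^{I}$, uses \Cref{lem1.12} (plus preservation of trivial fibrations) to see that $k^{!}(Z^{I})$ is a path object for the standard model structure, and then establishes the bijection $[X, k^{!}(Z)] \cong [k_{!}(X), Z]$ of homotopy classes; since weak equivalences between cofibrant objects are detected by homotopy classes of maps into fibrant objects, this yields the lemma, and the Quillen adjunction (\Cref{cor1.14}) is only afterwards deduced from \Cref{lem1.10,lem1.13}. You instead prove \Cref{cor1.14} first --- showing $k_{!}$ sends trivial cofibrations to Joyal trivial cofibrations by transposing lifting problems across the adjunction and invoking the criterion that a cofibration is trivial if and only if it has the left lifting property against fibrations between fibrant objects --- and then deduce \Cref{lem1.13} from Ken Brown's lemma. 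Both arguments pivot on the same point: \Cref{lem1.12} only controls $k^{!}$ on quasi-fibrations over quasi-categories, so one must reduce the detection of Joyal equivalences (respectively trivial cofibrations) to tests against fibrant objects; the paper does this with homotopy classes, you do it with the lifting criterion. Your route is arguably cleaner and makes \Cref{cor1.14} a byproduct rather than a corollary, but it leans on that lifting characterization, which is a genuine theorem (standard, due to Joyal) whose proof uses exactly the path-object and homotopy-class arguments the paper performs by hand; your parenthetical sketch understates this --- the surjectivity half is indeed a lifting against $Z \rightarrow *$, but the injectivity (``split epimorphism'') half requires lifting a homotopy against a path-object fibration $Z^{I} \rightarrow Z \times Z$, not merely a diagram chase. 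So the two proofs have comparable depth; yours repackages the homotopy-theoretic content into a citable general lemma and, in exchange, obtains the Quillen adjunction for free.
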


\begin{proof}
Suppose $Z$ is a quasi-category. Then $k^{!}(Z)$ is a Kan complex by \ref{lem1.12}. The functor $k^{!}$ preserves trivial fibrations, and takes quasi-fibrations between quasi-categories to Kan fibrations. 

Suppose that 
\begin{equation*}
\xymatrix{
& Z^{I} \ar[d] \\
Z \ar[r]_{\Delta} \ar[ur] & Z \times Z}
\end{equation*}
is a path object for the Joyal model structure. Then the induced diagram
\begin{equation*}
\xymatrix{
& k^{!}(Z^{I}) \ar[d]^{(p_{0*}, p_{1*})} \\
k^{!}(Z) \ar[r]_>>>>{\Delta} \ar[ur] & k^{!}(Z) \times k^{!}(Z)}
\end{equation*}
is a path object for the standard model structure. It follows that there are bijections 
\begin{equation*}
[X, k^{!}(Z)] \cong \pi(X, k^{!}(Z)) \cong \pi(k_{!}(X), Z) \cong [k_{!}(X),Z]
\end{equation*}
where $\pi(K, Y)$ is the set of right homotopy classes for the respective path objects constructed above. Therefore, $k_{!}$ takes weak equivalences to Joyal equivalences. 

\end{proof}

\begin{corollary}\label{cor1.14}
The adjoint pair 
$$
k_{!} : s\textbf{Set} \leftrightarrows s\textbf{Set} : k^{!}
$$
is a Quillen adjunction between the standard model structure on simplicial sets and the Joyal model structure.
\end{corollary}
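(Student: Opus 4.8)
The plan is to verify that $k_{!}$ is a left Quillen functor, using the standard criterion that a left adjoint between two model structures is left Quillen precisely when it preserves cofibrations and trivial cofibrations. The crucial structural observation is that the cofibrations of both the standard model structure and the Joyal model structure on $s\textbf{Set}$ are exactly the monomorphisms; the two model structures differ only in their classes of weak equivalences. This means that the single statement ``$k_{!}$ preserves monomorphisms'' will do double duty, handling the underlying map of both kinds of cofibration at once, and the only remaining issue is the behaviour of $k_{!}$ on weak equivalences.

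First I would dispatch the preservation of cofibrations. By \Cref{lem1.10}, $k_{!}$ preserves monomorphisms; since monomorphisms are the cofibrations in both the source and target model structures, this immediately gives that $k_{!}$ carries cofibrations of the standard model structure to cofibrations of the Joyal model structure. Next I would treat trivial cofibrations. A trivial cofibration for the standard model structure is a monomorphism which is also a weak equivalence of simplicial sets. Its image under $k_{!}$ is again a monomorphism by \Cref{lem1.10}, hence a cofibration in the Joyal model structure, and it is a Joyal equivalence by \Cref{lem1.13}. A map which is simultaneously a monomorphism and a Joyal equivalence is precisely a trivial cofibration in the Joyal model structure, so $k_{!}$ preserves trivial cofibrations as well.

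Combining these two points shows that $k_{!}$ is a left Quillen functor, and therefore that $(k_{!}, k^{!})$ is a Quillen adjunction between the standard and Joyal model structures, as claimed. I do not expect any genuine obstacle here: all of the substantive work has already been carried out in \Cref{lem1.10} and \Cref{lem1.13}, and the proof of the corollary amounts to assembling these facts once one has recorded the coincidence of the cofibrations in the two model structures. The only subtlety worth flagging explicitly is that this coincidence is what allows the preservation of monomorphisms to cover both the cofibration and (together with \Cref{lem1.13}) the trivial-cofibration condition simultaneously.
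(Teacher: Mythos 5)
Your proof is correct and follows exactly the route the paper intends: its proof of \Cref{cor1.14} is simply ``Follows from \Cref{lem1.10,lem1.13},'' and your argument spells out precisely how those two lemmas combine (monomorphism preservation gives cofibrations, and together with preservation of weak equivalences gives trivial cofibrations, using that cofibrations are monomorphisms in both model structures). Nothing is missing; you have just made explicit what the paper leaves implicit.
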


\begin{proof}

Follows from \ref{lem1.10} and \ref{lem1.13}.
\end{proof}
The following theorem (\cite[Theorem 4.12]{JT1}) is a consequence of \ref{cor1.14} (see \cite[Sections 2-4]{JT1}).

\begin{theorem}\label{thm1.15}
  Let $t_{!}$ be the colimit-preserving functor defined by $t_{!}(\Delta^{n} \tilde{\times} \Delta^{m}) = \Delta^{n} \times B\pi(\Delta^{m})$. There is a Quillen equivalence
\begin{equation*}
t_{!} : s^{2} \textbf{Set} \leftrightarrows  s \textbf{Set} : t^{!}
\end{equation*}
between the complete Segal space model structure and the Joyal model structure.
\end{theorem}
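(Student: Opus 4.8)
The plan is to construct the right adjoint $t^!$, verify that $(t_!, t^!)$ is a Quillen pair first for the Reedy structures of \Cref{exam1.6} and then after the localization of \Cref{def1.7}, and finally check the two conditions of a standard recognition criterion for a Quillen equivalence. Since $t_!$ is colimit-preserving on the presheaf category $s^2\textbf{Set}$, it has a right adjoint $t^!$ with $t^!(Y)_{m,n} = hom(t_!(\Delta^{m,n}), Y) = hom(\Delta^m \times B\pi(\Delta^n), Y)$. Two computations guide everything. Setting $n = 0$ gives $t^!(Y)_{*,0} \cong Y$ (the ``bottom row'' of $t^!(Y)$ is $Y$ itself), while the adjunction $(-)\times\Delta^m \dashv (-)^{\Delta^m}$ together with \Cref{def1.9} identifies the vertical simplicial set in horizontal degree $m$ as $t^!(Y)_{m,*} \cong k^!(Y^{\Delta^m})$. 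Since $Y^{\Delta^m}$ is a quasi-category whenever $Y$ is, \Cref{lem1.11,lem1.12} show that every vertical row of $t^!(Y)$ is then a Kan complex.

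Next I would show $(t_!, t^!)$ is a Quillen adjunction. Since $t_!(A \tilde\times B) \cong A \times k_!(B)$, the functor $t_!$ carries the generating Reedy cofibration $\partial(\Delta^n\tilde\times\Delta^m)\subset\Delta^n\tilde\times\Delta^m$ to the pushout-product of $\partial\Delta^n\subset\Delta^n$ with $k_!(\partial\Delta^m)\subset B\pi(\Delta^m)$, a monomorphism by \Cref{lem1.10}; hence $t_!$ preserves cofibrations. Likewise a generating Reedy trivial cofibration (the pushout-product of $\partial\Delta^k\subset\Delta^k$ with $\Lambda^n_r\subset\Delta^n$) is sent to the pushout-product of $\partial\Delta^k\subset\Delta^k$ with $k_!(\Lambda^n_r)\subset B\pi(\Delta^n)$; the latter is a Joyal trivial cofibration by \Cref{lem1.10,lem1.13}, and since the Joyal structure is cartesian the pushout-product is again a Joyal trivial cofibration. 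Thus $(t_!,t^!)$ is Quillen from the Reedy to the Joyal structure. To descend to the localization I compute $t_!$ on the localizing maps: $t_!(G(n)\subset F(n))$ is the spine inclusion $\mathrm{Sp}^n\subset\Delta^n$ (the string of edges $\{0,1\},\dots,\{n-1,n\}$), which is inner anodyne and so a Joyal equivalence, and $t_!(F(0)\subset I)$ is the endpoint inclusion $\Delta^0\subset B\pi(\Delta^1)$ into the contractible interval groupoid, also a Joyal equivalence. By the universal property of left Bousfield localization \cite{Hirschorn}, $(t_!,t^!)$ is therefore a Quillen adjunction between the complete Segal model structure and the Joyal model structure. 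In particular, being right Quillen for the localized structure, $t^!$ carries each quasi-category to a fibrant object of \Cref{def1.7}, i.e.\ to a complete Segal space.

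To upgrade this to a Quillen equivalence I would use the criterion that $t^!$ reflects weak equivalences between fibrant objects and that the derived unit is a weak equivalence on every object (all objects being cofibrant, as cofibrations are monomorphisms). For the reflection property, if $f\colon Y\to Y'$ is a map of quasi-categories and $t^!(f)$ is a weak equivalence in the complete Segal model structure, then, both targets being fibrant, $t^!(f)$ is a Reedy (levelwise) equivalence, so $k^!(Y^{\Delta^m})\to k^!((Y')^{\Delta^m})$ is a standard equivalence for every $m$. Composing with the core identification $k^!(-)\simeq J(-)$ of \Cref{lem1.11} yields equivalences $J(Y^{\Delta^m})\to J((Y')^{\Delta^m})$; the cases $m=0,1$ express that $f$ is essentially surjective and fully faithful, hence a Joyal equivalence (cf.\ \cite{Joyal1}).

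The verification of the derived unit $X\to t^!\!\big(R\,t_!(X)\big)$ is, I expect, the main obstacle. On a representable $\Delta^{m,n}$ the situation is favourable: $t_!(\Delta^{m,n})=\Delta^m\times B\pi(\Delta^n)$ is already a quasi-category, so the fibrant replacement $R$ may be taken to be the identity and the unit $\Delta^{m,n}\to t^!\big(\Delta^m\times B\pi(\Delta^n)\big)$ can be analyzed directly, the vertical contribution being governed by \Cref{lem1.11}. The delicate point is propagating this from representables to arbitrary $X$: because $t^!$ is a right adjoint, $R\,t^!$ does not preserve the homotopy colimits (nor the homotopy pushouts of the skeletal filtration) along which one would like to glue, so the naive reduction fails. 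Controlling this step is exactly where the finer, row-by-row comparison with the adjunction $(k_!,k^!)$ of \Cref{cor1.14} — which already relates the vertical simplicial data of $t^!(Y)$ to $Y$ through the core functor — must be carried out, following \cite[Sections 2--4]{JT1}; this is the technical heart of the argument.
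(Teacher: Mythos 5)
First, a point of context: the paper itself does not prove \Cref{thm1.15}; it imports the result, remarking only that it "is a consequence of \Cref{cor1.14} (see \cite[Sections 2--4]{JT1})". So your proposal has to be judged as a freestanding reconstruction of Joyal--Tierney's argument. The adjunction half of your proposal is sound: the formula $t^{!}(Y)_{m,n} = hom(\Delta^{m} \times B\pi(\Delta^{n}), Y)$ and the identification $t^{!}(Y)_{m,*} \cong k^{!}(Y^{\Delta^{m}})$ agree with equation (1) of \Cref{exam1.16}; $t_{!}$ carries the generating Reedy (trivial) cofibrations of \Cref{exam1.6} to pushout-products that are Joyal (trivial) cofibrations by \Cref{lem1.10,lem1.13} and the cartesian property of the Joyal structure; the localizing maps go to the spine inclusion $\mathrm{Sp}^{n} \subset \Delta^{n}$ and to $\Delta^{0} \subset B\pi(\Delta^{1})$, both Joyal equivalences; and Hirschhorn's universal property of left Bousfield localization then yields a Quillen adjunction for the localized structure of \Cref{def1.7}. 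The argument that $t^{!}$ reflects weak equivalences between quasi-categories, via $J(Y^{\Delta^{m}})$ for $m = 0, 1$ and the identification $k^{!} \simeq J$ from \Cref{lem1.11}, is compressed but essentially correct.

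The genuine gap is the one you flag yourself: the derived-unit condition, that $X \rightarrow t^{!}\bigl(R\,t_{!}(X)\bigr)$ is a complete Segal equivalence for every bisimplicial set $X$, is never established. Your closing paragraph explains accurately why the naive reduction to representables fails ($t^{!}$ is a right adjoint and does not commute with the homotopy colimits along which one would glue), and then defers the step to "\cite[Sections 2--4]{JT1}" --- but that step is not a technical remainder; it is the mathematical content of the theorem being proven. It is precisely where Rezk's completeness condition (imposed by localizing at $F(0) \subset I$) is matched against the invertible arrows of a quasi-category, and no formal consequence of the Quillen-pair structure or of \Cref{cor1.14} can substitute for it. A proof whose central step is delegated to the very theorem being cited is not a proof: as written, your text establishes that $(t_{!}, t^{!})$ is a Quillen adjunction between the complete Segal and Joyal model structures, together with one of the two conditions in the recognition criterion for a Quillen equivalence, and leaves the substantive condition open.
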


\begin{example}\label{exam1.16}
Observe that 

\begin{equation*}
t^{!}(Y)_{m,n} \cong hom(\Delta^{m} \times B\pi(\Delta^{n}), X) \cong hom(B\pi(\Delta^{n}), \textbf{hom}(\Delta^{m}, X))
\end{equation*}
so that 
\begin{equation}\label{eq 1}
t^{!}(Y)_{m, *} =  k^{!}\textbf{hom}(\Delta^{m}, Y) 
\end{equation}

 A bi-simplicial set map $f: X \rightarrow t^{!}(Y)$ consists of maps 
\begin{equation*}
f: k_{!}(X_{m*}) \times \Delta^{m} \rightarrow Y
\end{equation*}
so that the diagrams 
\begin{equation*}
\xymatrix{
k_{!}(X_{n*}) \times \Delta^{m} \ar[d]_{\theta^{*} \times 1} \ar[r]_{1 \times \theta} & k_{!}(X_{n}) \times \Delta^{n} \ar[d] \\
k_{!}(X_{m*}) \times \Delta^{m} \ar[r]_{f} & Y}
\end{equation*}
commute for all ordinal number maps $\theta : [m] \rightarrow [n]$. It follows that
 
\begin{equation}\label{eq 2}
t_{!}(X) \cong d(k_{!}(X))
\end{equation}
\end{example}

\begin{lemma}\label{lem1.17}
Let K be a finite bi-simplicial set (i.e. having finitely many nondegenerate bisimplices) and $X \in s^{2}\textbf{Pre}(\mathscr{C})$. Then we have isomorphisms (natural in $K, X$)

\begin{enumerate}
\item{$p^{*}hom(K, X) \cong hom(K, p^{*}(X))$ if X is a simplicial sheaf }
\item{$p^{*}(X^{K}) \cong p^{*}(X)^{K} $ if X is a simplicial sheaf}
\item{$L^{2}hom(K, X) \cong hom(K, L^{2}(X))$ }
\item{$L^{2}(X^{K}) \cong L^{2}(X)^{K} $}
\end{enumerate}
 where $L^{2}$ denotes sheafification and $p$ is our choice of Boolean localization. 
\end{lemma}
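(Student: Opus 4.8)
The plan is to prove all four statements by a single mechanism, since each asserts that an exact functor---$p^{*}$ in (1)--(2), $L^{2}$ in (3)--(4)---commutes with a construction that, for finite $K$, is assembled entirely out of finite limits of evaluation functors. The two inputs I would use repeatedly are the following. First, $p^{*}$ is the inverse image of the geometric morphism $p$ and $L^{2}$ is sheafification, so both are exact; that is, both preserve finite limits (and both preserve all colimits as left adjoints). Second, both functors commute with evaluation at a fixed bidegree, $(p^{*}X)_{a,b} \cong p^{*}(X_{a,b})$ and $(L^{2}X)_{a,b} \cong L^{2}(X_{a,b})$, which holds essentially by definition, as $p^{*}$ and $L^{2}$ are applied levelwise in the bisimplicial directions.

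First I would record the combinatorial reduction. Since $K$ has only finitely many nondegenerate bisimplices, it is a finite colimit of representables $\Delta^{p,q}$: its skeletal filtration realizes it as a finite sequence of pushouts along the boundary inclusions $\partial(\Delta^{p}\tilde{\times}\Delta^{q})\subset \Delta^{p}\tilde{\times}\Delta^{q}$ of \Cref{exam1.6}, and each boundary is itself a finite colimit of lower representables. The functor $hom(-,X)$ is contravariant and carries these colimits to finite limits, while on a representable one has $hom(\Delta^{p,q},X)(U)=hom(\Delta^{p,q},X(U))\cong X(U)_{p,q}$ by Yoneda, so $hom(\Delta^{p,q},X)$ is just the evaluation presheaf $X_{p,q}$. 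Hence $hom(K,X)$ is a finite limit of evaluation presheaves of $X$. For the internal-hom statements I would use that $(X^{K})(U)_{m,n}=hom(\Delta^{m,n}\times K, X(U))$, and since $\Delta^{m,n}\times K$ is again a finite bisimplicial set, each bidegree of $X^{K}$ is likewise a finite limit of evaluation presheaves of $X$.

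With the reduction in hand, statements (1) and (3) are immediate: applying the exact functor to the finite limit that computes $hom(K,X)$ and using that it commutes with each evaluation yields the desired isomorphism, and naturality in $K$ and $X$ is automatic since every isomorphism used is natural. Statements (2) and (4) go the same way, bidegree by bidegree, via the expression for $(X^{K})_{m,n}$; one only notes that the resulting isomorphisms are compatible with the horizontal and vertical structure maps, which is again automatic from naturality. For (1) and (2) I would add the remark that when $X$ is a sheaf its bisimplices $X_{p,q}$ are sheaves, so all the intermediate finite limits are sheaves and $p^{*}$ is genuinely being applied within its domain.

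The main obstacle is purely the bookkeeping at the base case of the internal-hom functor: unwinding $\textbf{hom}(\Delta^{p,q},Y)$ through the product $(\Delta^{p}\times\Delta^{m})\tilde{\times}(\Delta^{q}\times\Delta^{n})$ and decomposing that finite bisimplicial set into nondegenerate cells, so as to exhibit $(X^{K})_{m,n}$ explicitly as a finite limit of evaluations. This is routine but slightly tedious combinatorics; the one conceptual point to keep in view is finiteness, since it is exactly the finiteness of $K$ that confines everything to finite limits and so lets exactness of $p^{*}$ and $L^{2}$ carry the argument. For infinite $K$ the construction would involve arbitrary limits, which neither functor preserves, and the statement would fail.
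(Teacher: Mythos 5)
Your proof is correct. Note that the paper states \Cref{lem1.17} without any proof at all, so there is no argument to compare against; your mechanism---decomposing the finite $K$ through its skeletal filtration to exhibit $hom(K,X)$, and bidegreewise $(X^{K})_{m,n}\cong hom(K\times\Delta^{m,n},X)$, as finite limits of the evaluation presheaves $X_{p,q}$, then invoking exactness of $p^{*}$ and $L^{2}$ and the fact that both functors are computed levelwise---is exactly the standard argument implicitly being relied upon (it is the same one used for the analogous simplicial-presheaf statement in \cite{Nick} and in \cite{local}), and it supplies the missing proof correctly, including the observation that finiteness of $K$ is what keeps every limit involved finite.
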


\begin{example}\label{exam1.18}
Suppose that $X$ is a simplicial sheaf and $K$ is a simplicial set. 
Let $p : s\textbf{Sh}(\mathscr{B}) \rightarrow \textbf{Sh}(\mathscr{C})$ be a geometric morphism. We have isomorphisms 
\begin{equation*}
p_{*}hom(K, X) \cong \underset{\underset{\Delta^{n} \rightarrow K}{\longleftarrow}}{lim} p_{*}(X_{n}) \cong hom(K, p_{*}(X))
\end{equation*}
Recall that $k^{!}(X)_{m} = hom(B \pi(\Delta^{m}), X)$.
Thus, there is a natural isomorphism of sheaves
\begin{equation*}
p_{*}k^{!}(X) \cong k^{!}p_{*}(X)
\end{equation*}
Thus, by adjunction 
\begin{equation}\label{eq 3}
p^{*}L^{2}k_{!} \cong L^{2}k_{!}p^{*}L^{2} 
\end{equation}
\end{example}

\section{The Model Structure}\label{newmodel}

The following construction is an example of the Grothendieck construction for a presheaf of categories $A$ on a site $\mathscr{C}$.

\begin{definition}\label{def2.1}
There is a site $\mathscr{C}/A$ whose objects are all pairs $(U, x)$ where $U$ is an object of $\mathscr{C}$ and  $x \in Ob(A)(U)$. A morphism $(\alpha, f): (V, y) \rightarrow (U, x)$ in the category $\mathscr{C}/A$ is a pair consisting of a morphism $\alpha : V \rightarrow U$ of $\mathscr{C}$ along with a morphism $f: \alpha^{*}(x) \rightarrow y$ of $A(U)$. Given another morphism $(\gamma, g)$, the composite $(\alpha, f) \circ (\gamma, g)$ is defined by 
\begin{equation*}
(\alpha, f) \circ (\gamma, g) = (\alpha \gamma, g \cdot \gamma^{*}(f))
\end{equation*}
There  is a forgetful functor $c: \mathscr{C} /A \rightarrow \mathscr{C}$ which is defined by $(U, x) \mapsto U$. The covering sieves for $\mathscr{C}/A$ are the sieves which contain a sieve of the form $c^{-1}(S)$ for $S$ is a covering sieve of $\mathscr{C}$. 
\end{definition}

\begin{definition}\label{def2.2}
 Denote $s, t: Mor(A) \rightarrow Ob(A)$ the source and target maps. We will regard $Mor(A)$ and $Ob(A)$ as discrete simplicial presheaves.  An \textbf{$A$-diagram} is a simplicial presheaf map $\pi_{X} : X \rightarrow Ob(A)$ together with an 'action diagram'
\begin{equation*}
\xymatrix{
X\times_{s} Mor(A) \ar[d]_{pr} \ar[r]^<<<<<{m} &  X \ar[d]^{\pi_{X}} \\
Mor(A) \ar[r]_{t} & Ob(A)}
\end{equation*}
One further requires that $m$ respects compositions and identities. We denote by $s\textbf{Pre}(\mathscr{C})^{A}$ the category of A-diagrams whose morphisms are natural transformations 
\begin{equation*}
\xymatrix{
X \ar[rr]^{f} \ar[dr]_{\pi_{X}} & & Y \ar[dl]^{\pi_{Y}}\\
& Ob(A)& }
\end{equation*}
that respect compositions and identities. 
\end{definition}

\begin{example}\label{exam2.3}
There is a natural isomorphism of categories 
\begin{equation*}
sSet \cong Pre(*/ \Delta^{op})
\end{equation*}
Consequently, we have an identification 
\begin{equation*}
s^{2}\textbf{Pre}(\mathscr{C}) \cong s\textbf{Pre}(\mathscr{C}/ \Delta^{op})
\end{equation*}
\end{example}

\begin{theorem}\label{thm2.4}
(\cite[pg. 817-819]{fibred-sites}). Let $A$ be a presheaf of categories on $\mathscr{C}$. There is an equivalence of categories between $s\textbf{Pre}(\mathscr{C}/A)$ and $s\textbf{Pre}(\mathscr{C})^{A^{op}}$. This equivalence induces a model structure on $s\textbf{Pre}(\mathscr{C})^{A^{op}}$ defined as follows
\begin{enumerate}
\item{A weak equivalence (respectively a cofibration)
\begin{equation*}
\xymatrix{
X \ar[rr]^{f} \ar[dr] & & Y \ar[ld] \\
& Ob(A) & }
\end{equation*}
of $A^{op}$-diagrams is a map such that the simplicial presheaf map $f: X \rightarrow Y$ is a local weak equivalence (respectively monomorphism).
}
\item{A fibration of $A^{op}$-diagrams is a map which has the right lifting property with respect to all trivial cofibrations.}
\end{enumerate}
\end{theorem}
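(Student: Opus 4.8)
The plan is to deduce the model structure by transporting the injective model structure on $s\textbf{Pre}(\mathscr{C}/A)$ across the equivalence asserted in the first sentence; thus the real content splits into (i) producing the equivalence of categories, and (ii) checking that the cofibrations and weak equivalences obtained by transport agree with the two classes described in the statement. The fibrations require no separate argument: once the other two classes are fixed and a model structure is known to exist, the fibrations are by definition exactly the maps lifting against trivial cofibrations, which is precisely condition (2).

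First I would record the two functors realizing the equivalence, which is the Grothendieck construction in this setting. Given $Y \in s\textbf{Pre}(\mathscr{C}/A)$, define $\Phi(Y)$ by $\Phi(Y)(U) = \coprod_{x \in Ob(A)(U)} Y(U,x)$, with structure map to $Ob(A)$ sending the summand indexed by $x$ to $x$, restriction along $\alpha : V \to U$ induced by the morphisms $(\alpha, \mathrm{id}) : (V, \alpha^{*}(x)) \to (U,x)$, and the $Mor(A)$-action induced by the vertical morphisms $(\mathrm{id}, g)$; by the contravariance of presheaves on $\mathscr{C}/A$ these data assemble into an object of $s\textbf{Pre}(\mathscr{C})^{A^{op}}$. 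Conversely, given $\pi_{X} : X \to Ob(A)$, define $\Psi(X)(U,x)$ to be the fibre of $\pi_{X}(U)$ over $x$. A direct check gives $\Psi \Phi \cong \mathrm{id}$ and $\Phi \Psi \cong \mathrm{id}$, the latter through the canonical decomposition $X(U) \cong \coprod_{x} \pi_{X}(U)^{-1}(x)$, so $\Phi$ and $\Psi$ are mutually inverse. I would emphasize that the underlying simplicial presheaf on $\mathscr{C}$ of $\Phi(Y)$ is $U \mapsto \coprod_{x} Y(U,x)$, so that the forgetful functor to $s\textbf{Pre}(\mathscr{C})$ used in the statement corresponds under $\Phi$ to formation of this total presheaf.

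With the equivalence in place I would transport the injective model structure from $s\textbf{Pre}(\mathscr{C}/A)$, which exists by the general theory of \cite{local}; transport along an equivalence of categories automatically yields a model structure, so only the identification of the transported classes remains. For cofibrations this is immediate: a map of presheaves on $\mathscr{C}/A$ is a monomorphism iff it is one in each bidegree $(U,x)$, and under $\Phi$ this coincides with the total map $f : X \to Y$ being a monomorphism of simplicial presheaves on $\mathscr{C}$, since monomorphisms of the total presheaves are detected on the summands $Y(U,x)$. This gives condition (1) for cofibrations.

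The main obstacle is the same identification for weak equivalences, namely that a map $\tilde f$ of $s\textbf{Pre}(\mathscr{C}/A)$ is a local weak equivalence for the topology of \Cref{def2.1} if and only if its image $f = \Phi(\tilde f)$ is a local weak equivalence of simplicial presheaves on $\mathscr{C}$. Here I would exploit the explicit description of that topology: every covering sieve of $(U,x)$ contains one of the form $c^{-1}(S)$ for a covering sieve $S$ of $U$, and $c^{-1}(S)$ is generated by the morphisms $(\alpha, \mathrm{id}) : (V, \alpha^{*}(x)) \to (U,x)$ with $\alpha \in S$. This matches the descent condition at $(U,x)$ against $c^{-1}(S)$, on the $x$-summand, with the descent condition at $U$ against $S$, and hence identifies sheafification $L^{2}$ on $\mathscr{C}/A$ with sheafification of the total presheaf on $\mathscr{C}$ (fibred over the sheafification of $Ob(A)$). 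Granting this compatibility, the sheaves of homotopy groups of $\tilde f$ computed on $\mathscr{C}/A$ agree with those of $f$ on $\mathscr{C}$, so $\tilde f$ is a local weak equivalence iff $f$ is; alternatively one checks this after applying the inverse image $p^{*}$ of a Boolean localization chosen for $\mathscr{C}/A$ compatibly with the fixed $p$ for $\mathscr{C}$, so that the stalkwise criterion transports verbatim. Verifying this compatibility of sheafifications, together with the compatible choice of Boolean localizations, is where essentially all the difficulty lies; once it is in hand, conditions (1) and (2) hold and the transported model structure is exactly the one described.
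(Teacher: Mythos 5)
First, a point of comparison: the paper contains no proof of \Cref{thm2.4} at all; it is imported wholesale from \cite[pg. 817--819]{fibred-sites}, so there is no internal argument to measure you against. Your overall plan is the natural one and is, in outline, what the cited reference does: build the Grothendieck-construction equivalence $\Phi, \Psi$, transport the injective model structure on $s\textbf{Pre}(\mathscr{C}/A)$ across it, and identify the transported classes. Several pieces of your write-up are solid: the mutually inverse functors are correctly described, your observation that $c^{-1}(S)$ is generated by the morphisms $(\alpha, \mathrm{id}) : (V, \alpha^{*}(x)) \to (U,x)$ is right (any $(\alpha,f)$ factors as $(\alpha,\mathrm{id})\circ(\mathrm{id},f)$), the cofibration identification is correct since monomorphisms of coproducts are detected summand-wise and the maps in question respect the decomposition over $Ob(A)$, and clause (2) is indeed tautological once the other two classes are identified.

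The genuine gap is exactly the step you flag as ``where essentially all the difficulty lies,'' and it cannot be waved through: the identification of local weak equivalences. Your proposed mechanism --- that the topology on $\mathscr{C}/A$ ``identifies sheafification $L^{2}$ on $\mathscr{C}/A$ with sheafification of the total presheaf on $\mathscr{C}$,'' summand by summand --- is false as stated. A matching family for a cover $S$ of $U$ in the total presheaf $\Phi(Y)$ consists of locally chosen pairs $(y_{\alpha} \in Ob(A)(V),\, g_{\alpha} \in Y(V,y_{\alpha}))$, and the objects $y_{\alpha}$ need not be the restrictions $\alpha^{*}(x)$ of any global $x \in Ob(A)(U)$; consequently $L^{2}\Phi(Y)$ is fibred over $L^{2}Ob(A)$, whose sections are descent data of objects of $A$, and there is no decomposition indexed by $Ob(A)(U)$ against which to compare $(L^{2}Y)(U,x)$. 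The comparison that actually matters is between pointed homotopy-group sheaves: at a vertex $v \in Y(U,x)_{0}$ the homotopy presheaf of $\Phi(Y)|_{U}$ on $\mathscr{C}/U$ is the restriction, along the functor $u : \mathscr{C}/U \to (\mathscr{C}/A)/(U,x)$, $\gamma \mapsto (\gamma,\mathrm{id})$, of the homotopy presheaf of $Y|_{(U,x)}$, so one must prove that isomorphism-after-sheafification can be tested along $u$. This is not formal: $u$ is a fully faithful right adjoint to the forgetful functor $(\beta,g)\mapsto\beta$ but is not an equivalence of sites for a general presheaf of categories $A$ (it is when $A$ is a presheaf of groupoids, since then each canonical map $(\mathrm{id},g)$ is invertible). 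The proof requires exploiting that every cover in $\mathscr{C}/A$ is generated by morphisms in the image of the functors $u$, together with quantification over all objects $(W,z)$ and all basepoints --- or, in your alternative phrasing, the construction of a Boolean localization of $Sh(\mathscr{C}/A)$ compatible with the one on $Sh(\mathscr{C})$, which is again the same nontrivial content. Since this identification is the entire mathematical substance of the theorem (it is what the cited pages of \cite{fibred-sites} establish), what you have is a correct reduction of the theorem to its hard step, not a proof of it. As a minor further caveat, with the paper's stated convention $f : \alpha^{*}(x) \to y$ the fibre of $\mathscr{C}/A$ over $U$ is $A(U)^{op}$, so presheaves on $\mathscr{C}/A$ restrict to covariant $A(U)$-diagrams; the bookkeeping deciding whether the result is an $A$- or $A^{op}$-diagram in the sense of \Cref{def2.2} deserves to be pinned down rather than attributed to ``contravariance.''
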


\begin{remark}\label{rmk2.5}
\ref{exam2.3} and \ref{thm2.4} imply that there is a Quillen equivalence 

\begin{equation*}
s\textbf{Pre}(\mathscr{C}/\Delta^{op}) \leftrightarrows s^{2}\textbf{Pre}(\mathscr{C})
\end{equation*}
 where the latter is equipped with a model structure in which a map $f: X \rightarrow Y$ is a weak equivalence (respectively cofibration) if and only if $X_{n*} \rightarrow Y_{n* }$ is a local weak equivalence (respectively monomorphism).

We call this model structure on bi-simplicial presheaves the \textbf{local Reedy model structure} and its weak equivalence \textbf{local Reedy equivalences}.  
\end{remark}

Suppose we choose a set $S$ of monomorphisms in $s\textbf{Pre}(\mathscr{C})$. By the results of \cite[Chapter 5]{local}, we can choose an uncountable regular cardinal $\alpha$ so that the $\alpha$-bounded cofibrations (respectively $\alpha$-bounded trivial cofibrations) form a set of generating cofibrations (respectively generating trivial cofibrations) for the injective model structure on $s\textbf{Pre}(\mathscr{C})$. We can form a smallest saturated set of monomorphisms $\mathcal{F}$, $S \subseteq \mathcal{F}$ subject to the following conditions 
\begin{enumerate}
\item{The class $\mathcal{F}$ contains all $\alpha$-bounded trivial cofibrations and all elements of $S$.}
\item{If $C \rightarrow D$ is an $\alpha$-bounded cofibration, and $A \rightarrow B$ is an element of $\mathcal{F}$, then $(A \times D) \cup (B \times C) \rightarrow B \times D$ is an element of $\mathcal{F}$.}
\end{enumerate}

The following is \cite[Theorem 7.18]{local}

\begin{theorem}\label{thm2.6}
 Let $\mathcal{F}$ be the set of cofibrations defined above. We call an object $X$ of $s\textbf{Pre}(\mathscr{C})$  $\mathcal{F}$\textbf{-injective} if the map $X \rightarrow *$ has the right lifting property with respect to each map in $\mathcal{F}$. We call a map a $\mathcal{F}-$\textbf{local equivalence} if and only if $\textbf{hom}(f, Z)$ is a weak equivalence of simplicial sets for each $\mathcal{F}$-injective object $Z$. There is a model structure on $s\textbf{Pre}(\mathscr{C})$, called the \textbf{$\mathcal{F}$-local model structure}, in which the weak equivalences are the $\mathcal{F}$-equivalences and cofibrations are monomorphisms.
\end{theorem}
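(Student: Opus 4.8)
The plan is to realize the $\mathcal{F}$-local model structure as a left Bousfield localization of the injective model structure on $s\textbf{Pre}(\mathscr{C})$. The cofibrations and weak equivalences are prescribed, so the real work is to produce the fibrations and verify the model axioms; the crux will be a bounded cofibration lemma that renders the cofibrations which are $\mathcal{F}$-equivalences cofibrantly generated.

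First I would construct a fibrant replacement. Applying the small object argument to the set of $\alpha$-bounded members of $\mathcal{F}$ produces a functorial factorization $X \xrightarrow{\eta_X} L_S X \to *$, where $\eta_X$ is a cofibration built as a transfinite composite of pushouts of $\alpha$-bounded maps of $\mathcal{F}$ and $L_S X$ has the right lifting property with respect to every $\alpha$-bounded map in $\mathcal{F}$. The key preliminary point is that every $f: A \to B$ in $\mathcal{F}$ is itself an $\mathcal{F}$-equivalence: for an $\mathcal{F}$-injective $Z$, a lifting problem for $\partial\Delta^n \subset \Delta^n$ against $\textbf{hom}(f, Z)$ transposes, by the exponential law, into a lifting problem for $(A \times \Delta^n) \cup (B \times \partial\Delta^n) \to B \times \Delta^n$ against $Z \to *$; since $\partial\Delta^n \subset \Delta^n$ is $\alpha$-bounded, condition (2) in the definition of $\mathcal{F}$ places this map in $\mathcal{F}$, so the lift exists and $\textbf{hom}(f, Z)$ is a trivial Kan fibration. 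Hence $\eta_X$, a transfinite composite of pushouts of maps in $\mathcal{F}$, is at once a cofibration and an $\mathcal{F}$-equivalence.

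The main obstacle is the bounded cofibration lemma: if $i: A \to B$ is a cofibration which is an $\mathcal{F}$-equivalence and $C \subseteq B$ is $\alpha$-bounded, then there is an $\alpha$-bounded $D$ with $C \subseteq D \subseteq B$ for which $D \cap A \to D$ is again an $\mathcal{F}$-equivalence. This lemma has two payoffs: it upgrades $L_S X$ from lifting against $\alpha$-bounded maps of $\mathcal{F}$ to being genuinely $\mathcal{F}$-injective, and it shows that the cofibrations which are $\mathcal{F}$-equivalences form the saturation of a set. The difficulty is that being an $\mathcal{F}$-equivalence is a homotopical, non-elementary condition, so a naive cardinality count fails. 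I would control it by transporting the situation along the Boolean localization $p: Sh(\mathscr{B}) \to Sh(\mathscr{C})$, where local weak equivalences are tested sectionwise and the classical bounded subobject arguments for the standard model structure on simplicial sets become available; the compatibilities of $p^{*}$ and sheafification with function complexes recorded in \Cref{lem1.17} are what let this reduction interact correctly with the complexes $\textbf{hom}(-, Z)$ detecting $\mathcal{F}$-equivalences.

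With the bounded cofibration lemma established the remaining steps are formal. I would first record the characterization that $f$ is an $\mathcal{F}$-equivalence if and only if $L_S f: L_S X \to L_S Y$ is a local weak equivalence: every object is cofibrant, $\eta_X$ and $\eta_Y$ are $\mathcal{F}$-equivalences, and a local weak equivalence between injective fibrant objects induces weak equivalences on all function complexes $\textbf{hom}(-, Z)$ with $Z$ injective fibrant. Two-out-of-three and closure under retracts for $\mathcal{F}$-equivalences follow immediately from this description, and limits and colimits are inherited from $s\textbf{Pre}(\mathscr{C})$. For factorizations, the cofibration/trivial-fibration factorization is supplied by the generating cofibrations of the injective structure, while the trivial-cofibration/fibration factorization comes from the small object argument applied to the union of the $\alpha$-bounded injective trivial cofibrations with the $\alpha$-bounded members of $\mathcal{F}$ --- the bounded cofibration lemma certifying that this set generates every cofibration which is an $\mathcal{F}$-equivalence. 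Finally, in the lifting axiom one implication holds by the definition of $\mathcal{F}$-fibration, and the converse follows from the usual retract argument, factoring a given $\mathcal{F}$-trivial cofibration through the generating set just described.
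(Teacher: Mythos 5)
The paper gives no proof of this statement at all: it is quoted verbatim from \cite[Theorem 7.18]{local}, so your proposal has to be measured against Jardine's proof. Your overall architecture matches it — the small object argument producing $\eta_X : X \rightarrow L_S X$, the pushout-product transposition showing that maps in (the saturation of) $\mathcal{F}$ are $\mathcal{F}$-equivalences, a bounded cofibration lemma, and then the standard closing arguments — and you correctly isolate the bounded cofibration lemma as the crux. But your proposed proof of that crux has a genuine gap. You want to ``transport the situation along the Boolean localization,'' yet $\mathcal{F}$-equivalence is not a stalkwise notion: it is defined by function complexes $\textbf{hom}(-, Z)$ into $\mathcal{F}$-injective objects $Z$, and nothing available at this stage lets $p^{*}$ interact with those complexes. \Cref{lem1.17} only concerns $hom(K, X)$ and $X^{K}$ for \emph{finite} (bi)simplicial sets $K$ in the exponent; it says nothing about $\textbf{hom}(D, Z)$ for an $\alpha$-bounded presheaf $D$, and $p^{*}$ does not commute with such internal homs (only $p_{*}$ does, by adjunction, as in \Cref{exam1.18}). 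A stalkwise description of $\mathcal{F}$-equivalences is precisely what one does not possess before the model structure exists — for the family of \Cref{def2.8} the paper only obtains it in \Cref{cor3.12}, as a consequence of the entire theory. The workable proof runs the other way: first establish the detection principle that $f$ is an $\mathcal{F}$-equivalence if and only if $L_S f$ is a local weak equivalence (this needs only that $L_S X$ is $\mathcal{F}$-injective, via the small object argument applied to a generating set of $\mathcal{F}$, together with \Cref{lem2.7}); then show $L_S$ preserves monomorphisms, preserves $\beta$-boundedness for a suitable cardinal $\beta$, and commutes with filtered colimits; and only then deduce the bounded lemma by a zig-zag of bounded subobjects, with Boolean localization entering solely through the already-established bounded monomorphism property for \emph{local} weak equivalences.

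This exposes a circularity in your ordering: you place the detection principle after the bounded lemma and even claim the bounded lemma is needed to make $L_S X$ genuinely $\mathcal{F}$-injective, whereas the only viable proof of the bounded lemma presupposes $L_S$-detection. In fact no such upgrade is needed — when $S$ is $\alpha$-bounded, as in \Cref{def2.8}, the generating set of $\mathcal{F}$ consists of $\alpha$-bounded maps, so lifting against the $\alpha$-bounded members of $\mathcal{F}$ already gives lifting against all of $\mathcal{F}$. A related slip occurs in your factorization and retract arguments: the generating set for the trivial cofibrations must be the set of \emph{all} $\alpha$-bounded cofibrations which are $\mathcal{F}$-equivalences, not the union of the $\alpha$-bounded injective trivial cofibrations with the $\alpha$-bounded members of $\mathcal{F}$. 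Your bounded lemma certifies generation by the former class; the latter, generally strictly smaller, class need not have the same saturation, so the retract argument closing CM4 would not go through as written.
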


Note that local weak equivalences are $\mathcal{F}$-equivalences.

\begin{lemma}\label{lem2.7}
 An $\mathcal{F}$-equivalence between two $\mathcal{F}$-injective objects of $s\textbf{Pre}(\mathscr{C})$ is a sectionwise weak equivalence.  

\end{lemma}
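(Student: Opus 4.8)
The plan is to promote the $\mathcal{F}$-equivalence $f$ to a genuine \emph{simplicial homotopy equivalence} and then to observe that such maps are automatically sectionwise weak equivalences. The whole argument runs through the function complexes $\textbf{hom}(-,-)$, exploiting the definition of $\mathcal{F}$-equivalence more or less directly.

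First I would record two structural facts. In the $\mathcal{F}$-local model structure the cofibrations are the monomorphisms, so $\emptyset \to W$ is a cofibration and every object is cofibrant; and the $\mathcal{F}$-injective objects are exactly the fibrant objects (they are the objects with the right lifting property against all of $\mathcal{F}$, which contains the $\alpha$-bounded trivial cofibrations). Because the $\mathcal{F}$-local structure is a left Bousfield localization of the simplicial injective model structure, it is itself simplicial with the same function complexes. Consequently, for any $W$ and any $\mathcal{F}$-injective $Z$, the mapping space $\textbf{hom}(W,Z)$ is a Kan complex.

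Next I would use the definition of $\mathcal{F}$-equivalence. By hypothesis $X$ and $Y$ are $\mathcal{F}$-injective, so for every $\mathcal{F}$-injective $Z$ the map $f^{*}\colon \textbf{hom}(Y,Z) \to \textbf{hom}(X,Z)$ is a weak equivalence of Kan complexes, hence a bijection on $\pi_0$. Setting $Z = X$ shows that $\mathrm{id}_X$ lies in the image on $\pi_0$, producing $g\colon Y \to X$ with $gf \simeq \mathrm{id}_X$ via a $1$-simplex of $\textbf{hom}(X,X)$. Setting $Z = Y$, the map $f^{*}\colon \textbf{hom}(Y,Y) \to \textbf{hom}(X,Y)$ is injective on $\pi_0$; since $fgf \simeq f$ (post-compose the homotopy $gf \simeq \mathrm{id}_X$ with $f$), the classes $[fg]$ and $[\mathrm{id}_Y]$ have the common image $[f]$, so $fg \simeq \mathrm{id}_Y$. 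This exhibits $f$ as a simplicial homotopy equivalence with inverse $g$.

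Finally, I would evaluate sectionwise. Since the simplicial tensor is computed objectwise, $(W \times \Delta^1)(U) = W(U) \times \Delta^1$, applying ``evaluate at $U$'' to $g$ and to the two homotopies shows that $f(U)\colon X(U) \to Y(U)$ is a simplicial homotopy equivalence of simplicial sets, hence a weak equivalence; as this holds for all $U$, the map $f$ is a sectionwise weak equivalence. The only delicate step I anticipate is the passage from the $\pi_0$-bijections to an honest simplicial homotopy inverse: this relies on the function complexes being Kan, so that two maps in a common path component are joined by a single edge, i.e. by a simplicial homotopy that survives sectionwise evaluation. The remainder is formal.
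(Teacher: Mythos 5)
Your proof is correct and follows essentially the same route as the paper: the paper's proof simply notes that the $\mathcal{F}$-injective objects are the fibrant objects of the $\mathcal{F}$-local model structure (citing \cite[Corollary 7.12]{local}) and that a weak equivalence between fibrant objects (every object being cofibrant) is a simplicial homotopy equivalence, hence sectionwise a weak equivalence. Your argument just unpacks that citation, proving the Whitehead-type step by hand from the definition of $\mathcal{F}$-equivalence via the $\pi_0$ argument on Kan function complexes, which is a valid, self-contained expansion of the same idea.
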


\begin{proof}
The $\mathcal{F}$-injective objects are the fibrant objects (\cite[Corollary 7.12]{local}), and a weak equivalence of fibrant objects is a simplicial homotopy equivalence. 
\end{proof}

\begin{definition}\label{def2.8}
Recall that we can identify bi-simplicial sets with constant bi-simplicial presheaves. Under this identification, let 
\begin{equation*}
S = \{ G(n) \subset F(n): n \in \mathbb{N} \} \cup \{ F(0) \subset I \}
\end{equation*}
Let $\mathcal{F}$ be the smallest saturated set containing $S$ as in \ref{thm2.6}.
Then the identification of \ref{rmk2.5} and \ref{thm2.6} applied to the family $\mathcal{F}$ give a model structure on $s^{2}\textbf{Pre}(\mathscr{C})$ called the \textbf{local complete Segal model structure}. We call its weak equivalences \textbf{local complete Segal equivalences}. We call its fibrations \textbf{Segal-injective} fibrations. 
\end{definition}

Let $U \in Ob(\mathscr{C})$. Then there exists a functor $L_{U} : s^{2}\textbf{Set} \rightarrow s^{2} \textbf{Pre}(\mathscr{C})$ defined by $L_{U}(K) = hom(-, U) \times K$.

\begin{remark}\label{rmk2.9}
Note that if $X$ is a fibrant object for the local complete Segal model structure, then it is a presheaf of complete Segal spaces. 

Indeed, $X$ has the right lifting property with respect to $L_{U}(i)$
where $i$ is one of the generating cofibrations for the Reedy model structure in \ref{exam1.6}. Thus, $X$ is sectionwise Reedy fibrant. 

Let $U \in Ob(\mathscr{C})$. Let $j_{n} : G(n) \rightarrow F(n)$ be the inclusion. By basic localization theory, 
$\textbf{hom}(L_{U}(j_{n}), X)$
is a weak equivalence for $n \in \mathbb{N}$. 
But this can be identified with $\textbf{hom}(F(n), X(U)) \rightarrow \textbf{hom}(G(n), X(U))$ (note that under the identification of \ref{exam2.3}, the constant simplicial presheaf $\Delta^{n}$ gets identified with the constant bi-simplicial presheaf $\Delta^{0} \tilde{\times} \Delta^{n}$). 

\end{remark}

\section{Equivalence with the local Joyal model Structure}\label{equiv}
Let 
$\mathcal{S}_{CSeg} : s^{2}\textbf{Pre}(\mathscr{C}) \rightarrow s^{2}\textbf{Pre}(\mathscr{C})$ and
$\mathcal{S}_{Joyal} : s\textbf{Pre}(\mathscr{C}) \rightarrow s\textbf{Pre}(\mathscr{C})$
denote, respectively, the functors obtained by applying the complete Segal and Joyal fibrant replacement functor sectionwise. Let $\mathcal{L}_{CSeg}, \mathcal{L}_{Joyal}, \mathcal{L}_{inj}$ denote, respectively, the fibrant replacement functors for the local complete Segal, local Joyal and injective model structures.

We define functors $t_{!} : s^{2}\textbf{Pre}(\mathscr{C}) \rightarrow s\textbf{Pre}(\mathscr{C})$ and $t^{!} : s\textbf{Pre}(\mathscr{C}) \rightarrow s^{2}\textbf{Pre}(\mathscr{C})$ by composition with $t_{!}$ and $t^{!}$ respectively. We also have functors $k_{!} : s\textbf{Pre}(\mathscr{C}) \rightarrow s\textbf{Pre}(\mathscr{C})$ and $k^{!} : s\textbf{Pre}(\mathscr{C}) \rightarrow s\textbf{Pre}(\mathscr{C})$.

\begin{lemma}\label{lem3.1}
There is a natural isomorphism $L^{2}t_{!}p^{*}L^{2} \cong p^{*}L^{2}t_{!}$.
\end{lemma}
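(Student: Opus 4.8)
The plan is to deduce this from the analogous statement for $k_{!}$ recorded in \eqref{eq 3}, by exploiting the factorization $t_{!} \cong d \circ k_{!}$ of \eqref{eq 2}. Here $d$ denotes the diagonal functor $s^{2}\textbf{Pre}(\mathscr{C}) \to s\textbf{Pre}(\mathscr{C})$, $(dX)_{n} = X_{n,n}$, and $k_{!}$ is applied in the vertical simplicial direction, i.e.\ $(k_{!}X)_{m,*} = k_{!}(X_{m,*})$; all functors are understood sectionwise on presheaves. Granting this factorization, the proof reduces to two auxiliary facts: (i) the diagonal $d$ commutes, up to natural isomorphism, with both sheafification $L^{2}$ and the inverse image $p^{*}$; and (ii) a bisimplicial (vertical) version of \eqref{eq 3} holds.

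For (i), I would use that $L^{2}$ and $p^{*}$ of a bisimplicial presheaf are computed in each bidegree, whereas $d$ is restriction along the diagonal $\Delta \to \Delta \times \Delta$. Thus for every $n$ one has $(dL^{2}X)_{n} = L^{2}(X)_{n,n} = L^{2}(X_{n,n}) = (L^{2}dX)_{n}$, and the identical computation with $p^{*}$ in place of $L^{2}$ (using that $p^{*}$ is left exact and cocontinuous) gives natural isomorphisms $dL^{2} \cong L^{2}d$ and $dp^{*} \cong p^{*}d$. This step is routine.

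For (ii), I would regard a bisimplicial presheaf $X$ as the horizontal diagram $m \mapsto X_{m,*}$ of vertical simplicial presheaves. Since $L^{2}$, $p^{*}$ and the vertical $k_{!}$ all act degreewise in the horizontal variable, applying \eqref{eq 3} (that is, \Cref{exam1.18}) to the simplicial presheaf $X_{m,*}$ for each $m$ yields isomorphisms $(p^{*}L^{2}k_{!}X)_{m,*} \cong (L^{2}k_{!}p^{*}L^{2}X)_{m,*}$, which assemble into a natural isomorphism $p^{*}L^{2}k_{!} \cong L^{2}k_{!}p^{*}L^{2}$ of functors on $s^{2}\textbf{Pre}(\mathscr{C})$.

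With (i) and (ii) established, the lemma follows from the chain of natural isomorphisms
\[
L^{2} t_{!} p^{*} L^{2} \cong L^{2} d\, k_{!} p^{*} L^{2} \cong d\, L^{2} k_{!} p^{*} L^{2} \cong d\, p^{*} L^{2} k_{!} \cong p^{*} d\, L^{2} k_{!} \cong p^{*} L^{2} d\, k_{!} \cong p^{*} L^{2} t_{!},
\]
in which the outer isomorphisms use $t_{!} \cong d\,k_{!}$ from \eqref{eq 2}, the second and fifth use $dL^{2} \cong L^{2}d$, the fourth uses $dp^{*} \cong p^{*}d$, and the middle isomorphism is the vertical form of \eqref{eq 3} from (ii). The main obstacle I anticipate is (ii): one must verify that the isomorphisms provided by \Cref{exam1.18} (ultimately \Cref{lem1.17}) are natural enough to be compatible with the horizontal face and degeneracy operators, so that they glue to an isomorphism of bisimplicial objects rather than merely a levelwise family. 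Everything else is formal bookkeeping.
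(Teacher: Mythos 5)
Your proposal is correct and takes essentially the same route as the paper: the paper's proof is a one-line appeal to the factorization $t_{!} \cong d\,k_{!}$ of equation \ref{eq 2} together with the commutation relation $p^{*}L^{2}k_{!} \cong L^{2}k_{!}p^{*}L^{2}$ of equation \ref{eq 3}, which is precisely your chain of isomorphisms. The points you flag as needing verification (that the diagonal commutes with $L^{2}$ and $p^{*}$, and that the levelwise isomorphisms from \Cref{exam1.18} are natural in the horizontal structure maps) are exactly the bookkeeping the paper leaves implicit, and they hold because $L^{2}$ and $p^{*}$ are computed in each bidegree.
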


\begin{proof}
This follows from equation \ref{eq 2} of \ref{exam1.16} and equation \ref{eq 3} of \ref{exam1.18}. 
\end{proof}

\begin{lemma}\label{lem3.2}
Let $f: X \rightarrow Y$ be a local weak equivalence. Then $k_{!}(f)$ is a local Joyal equivalence. 
\end{lemma}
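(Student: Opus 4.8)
The plan is to reduce the assertion to a sectionwise statement on the Boolean localization $\mathscr{B}$ and then feed it into the absolute comparison \Cref{lem1.13}. The starting point is the characterization of both classes of equivalences by Boolean localization: a map $g$ of simplicial presheaves on $\mathscr{C}$ is a local weak equivalence (respectively a local Joyal equivalence) if and only if $p^{*}L^{2}(g)$ is a local weak equivalence (respectively a local Joyal equivalence) of simplicial sheaves on $\mathscr{B}$; the first is the defining property of Boolean localization for the injective model structure (\cite[Chapter 4]{local}), and the second is the corresponding statement for the local Joyal structure constructed in \cite{Nick}. Moreover, over the Boolean site such local equivalences between sheaves are detected sectionwise. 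It therefore suffices to show that $p^{*}L^{2}k_{!}(f)$ is a local Joyal equivalence on $\mathscr{B}$.

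First I would rewrite this object using the commutation isomorphism \eqref{eq 3} of \Cref{exam1.18}, which gives $p^{*}L^{2}k_{!}(f) \cong L^{2}k_{!}p^{*}L^{2}(f)$. Since $f$ is a local weak equivalence, $p^{*}L^{2}(f)$ is a local, hence sectionwise, weak equivalence on $\mathscr{B}$. As $k_{!}$ is colimit-preserving and is applied objectwise to presheaves, for each object $b$ of $\mathscr{B}$ the map $k_{!}\big(p^{*}L^{2}(f)\big)(b)$ is $k_{!}$ applied to a weak equivalence of simplicial sets, which is a Joyal equivalence by \Cref{lem1.13}. Hence $k_{!}p^{*}L^{2}(f)$ is a sectionwise, and a fortiori local, Joyal equivalence of simplicial presheaves on $\mathscr{B}$.

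It then remains to commute the outer sheafification past this conclusion. The unit $Z \rightarrow L^{2}(Z)$ is inverted by $L^{2}$ and is therefore a local Joyal equivalence on $\mathscr{B}$ by the characterization above; it follows by two-out-of-three applied to the naturality squares of the unit that sheafification preserves local Joyal equivalences. Applying this to $k_{!}p^{*}L^{2}(f)$ shows that $L^{2}k_{!}p^{*}L^{2}(f)$ is a local Joyal equivalence on $\mathscr{B}$. Combining this with the isomorphism $p^{*}L^{2}k_{!}(f) \cong L^{2}k_{!}p^{*}L^{2}(f)$ and the Boolean characterization of local Joyal equivalences shows that $k_{!}(f)$ is a local Joyal equivalence, as desired.

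The main obstacle I anticipate is the interaction between $k_{!}$ and sheafification: because $k_{!}$ does not carry sheaves to sheaves, one cannot argue entirely inside $s\textbf{Sh}(\mathscr{B})$, and it is precisely the commutation isomorphism \eqref{eq 3} together with the fact that $L^{2}$ leaves the Joyal homotopy type undisturbed that allows the objectwise computation via \Cref{lem1.13} to be transported back to $\mathscr{C}$. The second delicate point is that the reduction to a sectionwise statement is legitimate for the local Joyal structure at all; this rests on its construction by Boolean localization in \cite{Nick} rather than on any formal property visible from the excerpt alone.
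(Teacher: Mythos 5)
Your overall strategy is the same as the paper's: transport the problem to the Boolean site via the isomorphism $p^{*}L^{2}k_{!} \cong L^{2}k_{!}p^{*}L^{2}$ of \eqref{eq 3}, apply the absolute statement \Cref{lem1.13} sectionwise, and come back using the fact that $p^{*}L^{2}$ reflects local Joyal equivalences. However, there is a genuine gap at the pivotal step where you assert that local weak equivalences ``between sheaves are detected sectionwise'' over $\mathscr{B}$ and apply this to $p^{*}L^{2}(f)$ for an \emph{arbitrary} local weak equivalence $f$. The detection statement you are invoking (\cite[Lemma 4.23]{local}, which is what the paper cites here) holds for sheaves of \emph{Kan complexes} on a Boolean localization, not for arbitrary simplicial sheaves. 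The hypothesis is not removable: for instance, if $\mathscr{B}$ is the powerset of an infinite set, sheaves are families indexed by the atoms and sections over an element are infinite products, and infinite products of weak equivalences between non-fibrant simplicial sets need not be weak equivalences; so a stalkwise (local) weak equivalence of such sheaves can fail to be a sectionwise one. Since nothing in your setup guarantees that $X$ and $Y$ are presheaves of Kan complexes, the claim ``$p^{*}L^{2}(f)$ is a local, hence sectionwise, weak equivalence'' is unjustified as written.

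The fix is exactly the paper's first move, which your proposal is missing: precompose with the natural sectionwise fibrant replacement $\phi_{X}\colon X \rightarrow Ex^{\infty}(X)$. Since $\phi_{X}$ and $\phi_{Y}$ are sectionwise weak equivalences, $k_{!}(\phi_{X})$ and $k_{!}(\phi_{Y})$ are sectionwise, hence local, Joyal equivalences by \Cref{cor1.14}, so by two-out-of-three in the naturality square it suffices to prove the lemma for $Ex^{\infty}(f)$, which is a local weak equivalence of presheaves of Kan complexes. After this reduction, the Kan hypothesis needed for \cite[Lemma 4.23]{local} is available, and the remainder of your argument (sectionwise application of \Cref{lem1.13} on $\mathscr{B}$, insensitivity of local Joyal equivalences to sheafification, and reflection along $p^{*}L^{2}$) goes through essentially verbatim and matches the paper's proof.
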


\begin{proof}
Consider the natural sectionwise fibrant replacement map $\phi_{X} : X \rightarrow Ex^{\infty}(X)$. $k_{!}(\phi_{X})$ is a sectionwise, and hence local Joyal equivalence by \ref{cor1.14}. Thus, the diagram
\begin{equation*}
\xymatrix{
k_{!}(X) \ar[d]_{k_{!}(f)}\ar[r]_{} & k_{!}Ex^{\infty}(X) \ar[d]^{k_{!}Ex^{\infty}(f)} \\
k_{!}(Y) \ar[r] & k_{!}Ex^{\infty}(Y)}
\end{equation*}
and the 2 out of 3 property imply that we may assume that $f$ is a map of presheaves of Kan complexes. The fact that $p^{*}L^{2}$ preserves local weak equivalences, along with \cite[Lemma 4.23]{local}, imply that  $p^{*}L^{2}(f)$ is a sectionwise weak equivalence. Consider the diagram
\begin{equation*}
\xymatrix{
k_{!}p^{*}L^{2}(X) \ar[r] \ar[d]_{k_{!}p^{*}L^{2}(f)} & L^{2}k_{!}p^{*}L^{2}(X) \ar[d]^{L^{2}k_{!}p^{*}L^{2}(f)} \\
k_{!}p^{*}L^{2}(X) \ar[r] & L^{2}k_{!}p^{*}L^{2}(Y)
}
\end{equation*}
The left vertical map is a sectionwise, and hence local Joyal equivalence by \ref{cor1.14}. By \cite[Corollary 3.2]{Nick}, the horizontal maps are local Joyal equivalences. Thus, $L^{2}k_{!}p^{*}L^{2}(f) \cong p^{*}L^{2}k_{!}(f)$ is a local Joyal equivalence. But $p^{*}L^{2}$ reflects local Joyal equivalences by \cite[Remark 3.8]{Nick}. 
\end{proof}

\begin{lemma}\label{lem3.3}
Let $f: A \rightarrow B$ be a local Joyal equivalence and $g: C \rightarrow D$ be a cofibration. Then $h: A \times C \rightarrow B \times C$ and $u : (A \times D) \cup_{A \times C} (B \times C) \rightarrow B \times D$ are local Joyal equivalences. 
\end{lemma}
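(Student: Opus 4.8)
The plan is to prove the two assertions in order, and to derive the pushout-product statement (the map $u$) from the product statement (the map $h$) by a purely formal gluing argument, so that the substantive work lies entirely in the first claim. Throughout I would exploit that the Joyal model structure on $s\textbf{Set}$ is cartesian, i.e. that multiplying by an arbitrary simplicial set preserves Joyal equivalences (see \cite{Joyal1}), together with the fact from the proof of \Cref{lem3.2} that $p^{*}L^{2}$ reflects local Joyal equivalences by \cite[Remark 3.8]{Nick} and is left exact, hence preserves finite products.

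For $h \colon A \times C \to B \times C$ the strategy mirrors \Cref{lem3.2}: first reduce to well-behaved objects, then transport the question through the Boolean localization. First I would reduce to the case in which $A$ and $B$ are presheaves of quasi-categories. The sectionwise Joyal fibrant replacements $A \to \mathcal{S}_{Joyal}(A)$ and $B \to \mathcal{S}_{Joyal}(B)$ are sectionwise Joyal equivalences; since the Joyal structure is cartesian, multiplying by $C$ sectionwise yields that $A \times C \to \mathcal{S}_{Joyal}(A) \times C$ and $B \times C \to \mathcal{S}_{Joyal}(B) \times C$ are again sectionwise, hence local, Joyal equivalences. A two-out-of-three argument on the evident commuting square (noting $\mathcal{S}_{Joyal}(f)$ is a local Joyal equivalence by two-out-of-three from the fibrant-replacement square of $f$) then reduces the claim to $\mathcal{S}_{Joyal}(f)$, so we may assume $A$ and $B$ are sectionwise quasi-categories.

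With $A,B$ presheaves of quasi-categories, I would invoke the Joyal analogue of \cite[Lemma 4.23]{local}: that $p^{*}L^{2}$ carries a local Joyal equivalence between presheaves of quasi-categories to a \emph{sectionwise} Joyal equivalence on $\mathscr{B}$. Granting this, left exactness gives $p^{*}L^{2}(h) \cong p^{*}L^{2}(f) \times \mathrm{id}_{p^{*}L^{2}(C)}$, and evaluating at each object of $\mathscr{B}$ and applying cartesianness of the Joyal structure once more shows $p^{*}L^{2}(h)$ is sectionwise, hence a local, Joyal equivalence; since $p^{*}L^{2}$ reflects local Joyal equivalences, $h$ is one. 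This sectionwise-detection step is the step I expect to be the main obstacle, since unlike the Kan case it is not quoted in the excerpt. If it is not available directly, I would establish it by transporting to the Kan setting via the adjunction $k_{!} \dashv k^{!}$: by \Cref{lem1.12} the functor $k^{!}$ sends sectionwise quasi-fibrations between quasi-categories to Kan fibrations and quasi-categories to Kan complexes, converting the problem into one about presheaves of Kan complexes to which \cite[Lemma 4.23]{local} applies, and then pulling the conclusion back.

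Finally, for $u \colon (A \times D) \cup_{A \times C}(B \times C) \to B \times D$ I would use left properness of the local Joyal model structure, which holds because its cofibrations are the monomorphisms and hence every object is cofibrant. In the pushout square
\[
\xymatrix{
A \times C \ar[r]^{A \times g} \ar[d]_{f \times C} & A \times D \ar[d] \\
B \times C \ar[r] & (A \times D) \cup_{A \times C} (B \times C)
}
\]
the top map $A \times g$ is a monomorphism, hence a cofibration, while the left map $f \times C$ is a local Joyal equivalence by the first part. Left properness then makes the pushout leg $A \times D \to (A \times D) \cup_{A \times C}(B \times C)$ a local Joyal equivalence; its composite with $u$ is $f \times D$, again a local Joyal equivalence by the first part, so two-out-of-three forces $u$ to be a local Joyal equivalence.
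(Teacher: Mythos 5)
Your proof is correct and follows essentially the same route as the paper: reduce to presheaves of quasi-categories by sectionwise fibrant replacement and cartesianness of the Joyal model structure (\cite[Corollary 2.2.5.4]{Lurie}), transport through $p^{*}L^{2}$ using left exactness and reflection of local Joyal equivalences, and obtain the map $u$ from left properness plus two-out-of-three, which is exactly the paper's (one-line) argument for the second claim spelled out. The sectionwise-detection step you flagged as the main obstacle is available off the shelf: it is \cite[Corollary 3.11]{Nick}, which the paper cites at precisely that point, so your fallback construction via $k_{!} \dashv k^{!}$ is unnecessary.
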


\begin{proof}
The second statement follows from left properness and the first statement. We prove the first statement. 

The map $A \times C \rightarrow \mathcal{S}_{Joyal}(A) \times \mathcal{S}_{Joyal}(C)$ is a sectionwise Joyal equivalence by \cite[Corollary 2.2.5.4]{Lurie} so it suffices to prove the statement for $A, B, C$ presheaves of quasi-categories. By \cite[Corollary 3.11]{Nick}, $p^{*}L^{2}(f)$ is a sectionwise Joyal equivalence. Thus, since $p^{*}L^{2}$ preserves finite limits, $p^{*}L^{2}(h)$ is isomorphic to 
\begin{equation*}
p^{*}L^{2}(A) \times p^{*}L^{2}(C) \rightarrow p^{*}L^{2}(B) \times p^{*}L^{2}(C)
\end{equation*}
which is a sectionwise Joyal equivalence by \cite[Corollary 2.2.5.4]{Lurie}. Thus, $h$ is a local Joyal equivalence, as required. 
\end{proof}

\begin{example}\label{exam3.4}
Recall that simplicial sets can be identified with constant simplicial presheaves. 
By a matching space argument, the generating trivial cofibrations for the local Reedy model structure on $s^{2}\textbf{Pre}(\mathscr{C})$ are of the form $f = (\Delta^{k} \tilde{\times} X) \cup (\partial \Delta^{k} \tilde{\times} Y) \rightarrow \Delta^{k} \tilde{\times} Y$, where $X \rightarrow Y$ is an $\alpha$-bounded trivial cofibration. 

Thus, since $t_{!}$ preserves colimits, we have

\begin{equation*}
t_{!}(f) =  (\Delta^{k} \times k_{!}(X)) \cup (\partial \Delta^{k} \times k_{!}(Y)) \rightarrow \Delta^{k} \times k_{!}(Y) 
\end{equation*}

The map $k_{!}(X) \rightarrow k_{!}(Y)$  is a local Joyal equivalence by \ref{lem3.2}. Thus, the map $t_{!}(f)$ is a local Joyal equivalence by \ref{lem3.3}.
\end{example}

\begin{lemma}\label{lem3.5}
Let $\mathcal{L}_{CSeg}$ be the fibrant replacement for the local complete Segal model structure. Then the natural map $t_{!}(X) \rightarrow t_{!}(\mathcal{L}_{CSeg}(X))$ is a local Joyal equivalence. 

\end{lemma}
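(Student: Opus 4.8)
The plan is to show that $t_{!}$ carries the generating trivial cofibrations of the local complete Segal model structure to local Joyal equivalences and then to propagate this statement over the small object argument. Concretely, let $\mathcal{G}$ denote the class of monomorphisms $i$ of $s^{2}\textbf{Pre}(\mathscr{C})$ for which $t_{!}(i)$ is a local Joyal equivalence. Since $t_{!}$ preserves colimits and monomorphisms (the latter because $k_{!}$ preserves monomorphisms, \Cref{lem1.10}), each $t_{!}(i)$ with $i \in \mathcal{G}$ is a local Joyal \emph{trivial} cofibration; as trivial cofibrations are closed under pushout, transfinite composition, coproduct and retract, the class $\mathcal{G}$ is saturated. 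The fibrant replacement $X \to \mathcal{L}_{CSeg}(X)$ lies in the saturation of the generating set $\mathcal{F}$ of \Cref{def2.8}, being a transfinite composite of pushouts of coproducts of maps in $\mathcal{F}$, so it suffices to prove $\mathcal{F} \subseteq \mathcal{G}$. Because $\mathcal{F}$ is generated by $S$ together with the $\alpha$-bounded trivial cofibrations under saturation and the pushout-product operation of \Cref{thm2.6}(2), it is enough to check that $\mathcal{G}$ contains $S$ and the $\alpha$-bounded trivial cofibrations and is closed under that pushout-product operation.

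For the generators in $S$ I would compute $t_{!}$ directly, using the identity $t_{!}(K \tilde{\times} L) \cong K \times k_{!}(L)$ for simplicial sets $K,L$, which follows from equation \ref{eq 2} of \Cref{exam1.16}. Since $G(n)$ and $F(n)$ are vertically discrete, $t_{!}(G(n) \subset F(n))$ is the inclusion of the spine $\Delta^{\{0,1\}} \cup \cdots \cup \Delta^{\{n-1,n\}} \hookrightarrow \Delta^{n}$, which is inner anodyne and so a sectionwise, hence local, Joyal equivalence; and $t_{!}(F(0) \subset I)$ is the inclusion $\Delta^{0} \hookrightarrow B\pi(\Delta^{1})$ of a vertex into the nerve of the contractible groupoid, again a sectionwise Joyal equivalence. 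Thus $S \subseteq \mathcal{G}$. That $\mathcal{G}$ contains the $\alpha$-bounded trivial cofibrations is immediate from \Cref{exam3.4}, which places the generating local Reedy trivial cofibrations in $\mathcal{G}$, together with saturation.

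The remaining and most delicate point is closure of $\mathcal{G}$ under the pushout-product operation: given $A \to B$ in $\mathcal{G}$ and an $\alpha$-bounded cofibration $C \to D$, I must show $t_{!}$ of $(A \times D) \cup_{A \times C} (B \times C) \to B \times D$ is a local Joyal equivalence. The obstacle is that $t_{!}$ does not preserve products on the nose. The key technical input I would establish is that the canonical comparison $t_{!}(X \times Y) \to t_{!}(X) \times t_{!}(Y)$ is nevertheless a local Joyal equivalence for all $X,Y$. On representables this reduces, again via $t_{!}(K \tilde{\times} L) \cong K \times k_{!}(L)$, to showing that $k_{!}(K \times L) \to k_{!}(K) \times k_{!}(L)$ is a Joyal equivalence of simplicial sets; this map is a weak homotopy equivalence by \Cref{lem1.10} and two-out-of-three, and both its source and target are $\infty$-groupoids (every edge of $k_{!}(M)$ is invertible, equivalently $k^{!}$ does not distinguish a quasi-category from its core, exactly as in the proof of \Cref{lem1.11}), so a weak homotopy equivalence between them is a Joyal equivalence. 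I would then bootstrap from representables to arbitrary $X,Y$ by a skeletal induction, using that both sides preserve colimits, that $t_{!}$ and products preserve the relevant monomorphisms, and the gluing lemma (the local Joyal structure being left proper).

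Granting the comparison, the proof concludes as follows. Since $t_{!}$ preserves pushouts, applying the comparison at the four corners $A \times C$, $A \times D$, $B \times C$, $B \times D$ and invoking the gluing lemma identifies $t_{!}$ of the pushout-product, up to local Joyal equivalence, with the pushout-product of $t_{!}(A \to B)$ and $t_{!}(C \to D)$ formed in $s\textbf{Pre}(\mathscr{C})$. The first of these is a local Joyal equivalence by hypothesis and the second a cofibration, so \Cref{lem3.3} shows the pushout-product is a local Joyal equivalence, placing it in $\mathcal{G}$. This gives $\mathcal{F} \subseteq \mathcal{G}$ and hence the lemma. I expect the verification that the product comparison is a \emph{Joyal}, and not merely a weak homotopy, equivalence—that is, the identification of the relevant simplicial sets as $\infty$-groupoids—to be the main difficulty.
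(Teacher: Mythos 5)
Your proposal is correct in outline and shares the paper's formal skeleton: both arguments reduce the lemma, via saturation and the fact that $t_{!}$ preserves colimits and monomorphisms, to showing that $t_{!}$ sends a generating family for $\mathcal{F}$ to local Joyal equivalences, both use \Cref{exam3.4} to dispose of the local Reedy trivial cofibrations, and both finish the pushout-product maps with the same pushout-plus-two-out-of-three manipulation. Where you genuinely diverge is at the crucial step, the pushout-products built from $S$. The paper never needs a comparison $t_{!}(X \times Y) \rightarrow t_{!}(X) \times t_{!}(Y)$: for $A \rightarrow B$ in $S$ and an arbitrary $D$ it shows that $A \times D \rightarrow B \times D$ is already a \emph{sectionwise complete Segal equivalence}, by adjointness together with Rezk's theorem (\cite[Corollary 7.3]{Rezk}) that $X^{D}$ is a complete Segal space whenever $X$ is; since all objects are cofibrant and $t_{!}$ is a left Quillen equivalence by \cite[Theorem 4.12]{JT1}, it follows that $t_{!}(A \times D) \rightarrow t_{!}(B \times D)$ is a sectionwise Joyal equivalence. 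In other words, the paper does the work on the complete Segal side of $t_{!}$, where cartesian closure is a known theorem, whereas you do it on the Joyal side, where you must first prove that $t_{!}$ is monoidal up to local Joyal equivalence. Your route buys the explicit identification of $t_{!}$ on the generators (spine inclusions and $\Delta^{0} \rightarrow B\pi(\Delta^{1})$), a comparison lemma of independent interest, and a closure argument that applies to all of your class $\mathcal{G}$ rather than only to listed generators, which is formally exactly what the definition of $\mathcal{F}$ in \Cref{thm2.6} demands; the cost is precisely the difficulty you flag, namely showing $k_{!}(K \times L) \rightarrow k_{!}(K) \times k_{!}(L)$ is a Joyal rather than merely weak homotopy equivalence. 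That step does go through as you sketch it: every edge of $k_{!}(M)$ is the image of an edge of some $B\pi(\Delta^{n})$, so the homotopy category of its Joyal fibrant replacement is a groupoid, hence that replacement is a Kan complex, and a weak homotopy equivalence between simplicial sets with Kan fibrant replacements is a Joyal equivalence; the subsequent skeletal induction with the gluing lemma is routine but necessary, since the diagonal description $t_{!} = d \circ k_{!}$ does not let you argue levelwise (general $t_{!}(X)$ is not groupoid-like). So both arguments are sound; the paper's is shorter because it outsources the cartesian-closure difficulty to Rezk, while yours is more self-contained at the price of an extra lemma.
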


\begin{proof}

Let $\mathcal{F}$ be the family defined in \ref{def2.8}. The fibrant objects of the local complete Segal model structure are the $\mathcal{F}$-injective objects by \cite[Corollary 7.12]{local}. Thus, $\mathcal{L}_{CSeg}$ is obtained by taking iterated pushouts along maps in a set $\mathcal{G}$ generating $\mathcal{F}$ (c.f. \cite[Lemma 10.21]{local}). 
The functor $t_{!}$ commutes with colimits, and filtered colimits of local Joyal equivalences are local Joyal equivalences. Thus, it suffices to show that $t_{!}(\phi)$ is a local Joyal equivalence where $\phi$ is in the diagram

\begin{equation*}
\xymatrix{
\coprod_{\mathcal{G}}Q \times  hom(Q, X) \ar[r] \ar[d] & X  \ar[d]_{\phi} \\
 \coprod_{\mathcal{G}} R \times hom(Q, X)  \ar[r] &  E_{1}(X)}
\end{equation*}
where $Q \rightarrow R$ is an element of $\mathcal{G}$. We can take $\mathcal{G}$ to be the set of maps $A \times D \cup B \times C \rightarrow B \times D$, where $C \rightarrow D$ is a $\alpha$-bounded cofibration and $A \rightarrow B$ is either
\begin{enumerate}
\item{$G(n) \subset F(n)$}
\item{$F(0) \rightarrow I $}
\item{A generating trivial cofibration for the local Reedy model structure}
\end{enumerate} 

Let $X$ be a complete Segal space. Then $\textbf{hom}(I \times D, X) \rightarrow \textbf{hom}(F(0) \times D, X)$ is naturally isomorphic to $\textbf{hom}(I, X^{D}) \rightarrow \textbf{hom}(F(0), X^{D})$. By \cite[Corollary 7.3]{Rezk}, $X^{D}$ is a complete Segal space. Since $F(0) \rightarrow I$ is a complete Segal equivalence, $\textbf{hom}(I, X^{D}) \rightarrow \textbf{hom}(F(0), X^{D})$ is a weak equivalence. It follows that 
$F(0) \times D \subset  I \times D$ is a complete Segal equivalence. Similarly, we can show that $G(n) \times D \subset F(n) \times D$ is a complete Segal equivalence. 

The functor $t_{!}$ takes sectionwise complete Segal equivalences to sectionwise Joyal equivalences by \cite[Theorem 4.12]{JT1}. The maps $t_{!}(F(0) \times D) \subset t_{!}(I \times D)$ and $t_{!}(F(0) \times D) \subset t_{!}(I \times D)$ are sectionwise Joyal equivalences, and hence local Joyal equivalences.  If $f$ a generating trivial cofibration for the local Reedy model structure, then $t_{!}(f \times id_{D})$ is a local Joyal equivalence by \ref{exam3.4} and \ref{lem3.3}. Thus $t_{!}(g)$, $g \in \mathcal{G}$, can be written as
\begin{equation*}
(t_{!}(A \times D)) \cup (t_{!}(B \times C)) \rightarrow t_{!}(B \times D)
\end{equation*}
The maps $t_{!}(A \times D) \rightarrow t_{!}(B \times D)$ and $t_{!}(A \times C) \rightarrow t_{!}(B \times C)$ are local Joyal trivial cofibrations by \ref{lem3.3} and \cite[Theorem 4.12]{JT1}. Thus, the map $t_{!}(g)$ is a local Joyal trivial cofibration. In conclusion, $t_{!}(\phi)$ is a pushout of a trivial cofibration for the local Joyal model structure, and is thus a trivial cofibration.
\end{proof}

\begin{lemma}\label{lem3.6}
$J$ preserves both trivial Kan fibrations and Kan fibrations.
\end{lemma}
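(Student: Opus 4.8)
The plan is to verify directly that $J(q)$ has the right lifting property against the generating (trivial) cofibrations: the boundary inclusions $\partial\Delta^n \hookrightarrow \Delta^n$ in the case of a trivial Kan fibration, and the horn inclusions $\Lambda_k^n \hookrightarrow \Delta^n$ in the case of a Kan fibration. Given such a lifting square for $J(q)$, I would post-compose with the monomorphisms $J(X) \hookrightarrow X$ and $J(Y) \hookrightarrow Y$ to obtain a lifting square for $q$ itself, which admits a solution $\ell \colon \Delta^n \to X$ since $q$ is a (trivial) Kan fibration. The whole content of the lemma is then that $\ell$ factors through $J(X)$; equivalently, that every edge of $\ell$ is invertible in the homotopy category $\mathrm{Ho}(X)$.

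For the combinatorial cases this is automatic. If $n \ge 3$, every edge $\{i,j\}$ of $\Delta^n$ already lies in the subcomplex $\Lambda_k^n$ (resp.\ $\partial\Delta^n$), hence is the image of an edge of $J(X)$ and is invertible. If $n = 2$ and $q$ is a Kan fibration, the horn $\Lambda_k^2$ supplies two of the three edges of $\ell$ as invertible edges, and the relation $[d_1] = [d_0]\circ[d_2]$ attached to the $2$-simplex $\ell$ in $\mathrm{Ho}(X)$ forces the third edge to be invertible as well, by the two-out-of-three property for isomorphisms in a category. (For a trivial Kan fibration the boundary $\partial\Delta^n$ already contains every edge once $n \ge 2$, so nothing is needed here.) Thus in all cases with $n \ge 2$ the lift $\ell$ lands in $J(X)$, and the problem collapses to $n \le 1$; the case $n = 0$ being trivial, everything reduces to the single statement that an edge $\ell$ of $X$ with $q(\ell)$ invertible in $Y$ is itself invertible in $X$.

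For a trivial Kan fibration this edge statement is immediate: a trivial Kan fibration is a trivial fibration in the Joyal model structure (the cofibrations there are again the monomorphisms), hence a categorical equivalence, so the induced functor $\mathrm{Ho}(q)$ is an equivalence of categories and in particular conservative; invertibility of $q(\ell)$ therefore reflects to invertibility of $\ell$. (In fact, for trivial fibrations one can argue more directly and check the lifting property against all monomorphisms at once, using conservativity to handle the edges not already in the domain.) For a Kan fibration I would instead use that the given edge lies in $J(Y)$, so that it extends along the inclusion $\Delta^1 \hookrightarrow B\pi(\Delta^1)$ to a map $B\pi(\Delta^1) \to Y$; since $\Delta^1 \hookrightarrow B\pi(\Delta^1)$ is a trivial cofibration of the standard model structure by \Cref{lem1.10} and $q$ is a Kan fibration, the edge $\ell$ then extends to a map $B\pi(\Delta^1) \to X$, which exhibits $\ell$ as invertible in $X$.

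The main obstacle is precisely this last, $n = 1$, case for Kan fibrations: it is here (and only here) that the genuinely ``outer'' lifting content of a Kan fibration --- as opposed to a mere inner fibration --- is used, through the anodyne extension $\Delta^1 \hookrightarrow B\pi(\Delta^1)$. The point that must be handled with care is the claim that an edge of $J(Y)$ extends to a map $B\pi(\Delta^1) \to Y$; this is clear when $J(Y)$ is a Kan complex, which holds whenever $Y$ is a quasi-category --- the setting in which $J$ is applied throughout this paper --- since then $\Delta^1 \hookrightarrow B\pi(\Delta^1)$ already lifts inside the Kan complex $J(Y)$ before including into $Y$. By contrast, the cases $n \ge 2$ and the entire trivial-fibration statement are purely formal consequences of the two-out-of-three property for isomorphisms and of conservativity, and require no input beyond the elementary combinatorics of horns and boundaries.
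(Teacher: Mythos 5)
Your argument is correct where it applies, and its crux is the same as the paper's: the fibration reflects invertibility of edges (``creates quasi-isomorphisms''), proved by lifting against the trivial cofibration $\Delta^{1} \hookrightarrow B\pi(\Delta^{1})$ of \Cref{lem1.10}. The difference is in the packaging. The paper records the creation property as the statement that the square formed by the inclusions $J(X) \subseteq X$, $J(Y) \subseteq Y$ is a pullback, $J(X) \cong X \times_{Y} J(Y)$; then $J(q)$ is a base change of $q$ and is a (trivial) Kan fibration by pullback stability, with no induction over generating cofibrations. Your generator-by-generator verification is valid, but the cases $n \ge 2$ are redundant once the edge case is settled: if every edge of $X$ whose image is invertible in the path category of $Y$ is itself invertible, then \emph{any} lift $\Delta^{n} \rightarrow X$ of a simplex of $J(Y)$ has all of its edges invertible and so lands in $J(X)$, for all $n$ at once --- which is exactly the pullback formulation. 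On the other hand, your handling of the trivial-fibration half genuinely differs from the paper's ``the same proof applies'': identifying trivial Kan fibrations with Joyal trivial fibrations, hence Joyal equivalences, so that $\tau_{1}(q)$ is an equivalence of categories and in particular conservative, requires no hypothesis on $Y$ whatsoever. That extra generality is what \Cref{cor3.7} actually needs, since there the lemma is applied to sectionwise trivial fibrations between objects that are not assumed to be presheaves of quasi-categories.

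Your caveat in the Kan-fibration case is also well placed rather than a defect of your proof alone. For a general simplicial set $Y$, an edge invertible in the path category need not extend along $\Delta^{1} \hookrightarrow B\pi(\Delta^{1})$: if $Y$ is $2$-dimensional, no nondegenerate edge admits such an extension (the image of the nondegenerate $3$-simplices of $B\pi(\Delta^{1})$ would have to be degenerate $3$-simplices of $Y$ with a nondegenerate zeroth face and a degenerate first face, which is impossible), yet such an edge can easily be invertible in the path category. So the lifting step needs $Y$ to be a quasi-category, where the extension is supplied by \cite[Corollary 1.6]{Joyal1}, equivalently by your observation that $J(Y)$ is then a Kan complex. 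The paper's proof invokes exactly the same result of Joyal at exactly the same step, so it carries the same implicit hypothesis; your write-up is the one that makes this hypothesis visible. In short: same key mechanism, somewhat longer bookkeeping on your side, a more robust argument for the trivial-fibration half, and an honest flag on the Kan-fibration half.
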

\begin{proof}
Let $f: X \rightarrow Y$ be a Kan fibration. The map $f$ creates quasi-isomorphisms (i.e. 1-simplices that represent isomorphisms in the path category) since 
$\Delta^{1} \rightarrow B\pi \Delta^{1}$ is a trivial cofibration (c.f. \cite[Corollary 1.6]{Joyal1}). Thus, one has a pullback
\begin{equation*}
\xymatrix{
J(X) \ar[d] \ar[r] & J(Y) \ar[d] \\
X \ar[r] & Y 
}
\end{equation*}
 The same proof applies to trivial fibrations. 
\end{proof}

\begin{lemma}\label{cor3.7}
$J$ preserves local trivial fibrations. 
\end{lemma}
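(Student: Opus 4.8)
The plan is to detect local trivial fibrations by Boolean localization and thereby reduce the statement to \Cref{lem3.6}. As in the proof of \Cref{lem3.2}, the relevant input is the standard fact that a map $f$ of simplicial presheaves is a local trivial fibration if and only if $p^{*}L^{2}(f)$ is a trivial Kan fibration in each section over $\mathscr{B}$. Granting this, it suffices to prove that $p^{*}L^{2}(J(f))$ is a sectionwise trivial Kan fibration, since the same characterization then forces $J(f)$ to be a local trivial fibration.

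The bridge between the two sides is a natural isomorphism $p^{*}L^{2}J \cong J p^{*}L^{2}$. Assuming it, we obtain $p^{*}L^{2}(J(f)) \cong J(p^{*}L^{2}(f))$; since $p^{*}L^{2}(f)$ is a trivial Kan fibration in each section and $J$ preserves trivial Kan fibrations by \Cref{lem3.6}, applying \Cref{lem3.6} section by section shows that $J(p^{*}L^{2}(f))$ is again a sectionwise trivial Kan fibration, which is exactly what we need. Equivalently, one can route this through the pullback square of \Cref{lem3.6}: over $\mathscr{B}$ the map $p^{*}L^{2}(f)$ is a genuine Kan fibration, so that square exhibits $J(p^{*}L^{2}(f))$ as a base change of $p^{*}L^{2}(f)$, and local trivial fibrations are stable under base change.

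The main obstacle is the commutation $p^{*}L^{2}J \cong J p^{*}L^{2}$. One inclusion is formal: $p^{*}L^{2}$ induces functors on path categories and takes invertible $1$-simplices to invertible $1$-simplices (invertibility is visibly preserved by the filtered colimits computing $p^{*}L^{2}$), so the subobject $J(X) \subseteq X$ yields a natural comparison $p^{*}L^{2}J(X) \to J(p^{*}L^{2}X)$. The content is that this comparison is an isomorphism, i.e. that $p^{*}L^{2}$ creates local invertibility of edges. I would establish this using exactness of $p^{*}L^{2}$ together with Joyal's characterization of invertible edges as those extending along the trivial cofibration $\Delta^{1} \subset B\pi(\Delta^{1})$ --- precisely the ingredient already exploited in \Cref{lem3.6} --- or, alternatively, by reducing to the commutation relations for $k_{!}$ and $k^{!}$ recorded in \Cref{exam1.18}, combined with \Cref{lem1.11}, which compares $k^{!}$ with $J$ on presheaves of quasi-categories. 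This step, rather than the bookkeeping around \Cref{lem3.6}, is where essentially all the work lies.
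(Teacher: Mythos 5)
Your proposal follows the paper's proof exactly: detect local trivial fibrations via Boolean localization (so that $f$ local trivial fibration implies $p^{*}L^{2}(f)$ is a sectionwise trivial fibration), apply \Cref{lem3.6} sectionwise, use the commutation $Jp^{*}L^{2} \cong p^{*}L^{2}J$, and conclude by the detection property of $p^{*}L^{2}$. The only difference is that the commutation isomorphism you identify as the main work is simply cited by the paper from \cite[Lemma 3.6]{Nick}, so your sketch of its proof via exactness of $p^{*}L^{2}$ is supplementary detail rather than a divergence in approach.
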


\begin{proof}

Let $f$ be a local trivial fibration. Then $p^{*}L^{2}(f)$ is a sectionwise trivial fibration so that $Jp^{*}L^{2}(f)$ is a sectionwise trivial fibration. But \cite[Lemma 3.6]{Nick} implies that $Jp^{*}L^{2}(f) \cong p^{*}L^{2}J(f)$. Thus, $J(f)$ is a local trivial fibration by \cite[Lemma 4.15]{local}.
\end{proof}

\begin{lemma}\label{lem3.8}
Let $f: X \rightarrow Y$ be a local Joyal equivalence of presheaves of quasi-categories. Then $t^{!}(f)$ is a local Reedy equivalence. 
\end{lemma}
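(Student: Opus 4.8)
The plan is to reduce the statement, one horizontal level at a time, to a question about the maximal Kan complex functor $J$, and then to resolve that question by Boolean localization.

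First, recall from \Cref{rmk2.5} that $t^{!}(f)$ is a local Reedy equivalence precisely when each map of vertical simplicial presheaves $t^{!}(f)_{m*} \colon t^{!}(X)_{m*} \to t^{!}(Y)_{m*}$ is a local weak equivalence. Applying equation (1) of \Cref{exam1.16} sectionwise, one has $t^{!}(f)_{m*} = k^{!}(f^{\Delta^{m}})$, where $f^{\Delta^{m}} \colon X^{\Delta^{m}} \to Y^{\Delta^{m}}$ is obtained by cotensoring $f$ with the finite simplicial set $\Delta^{m}$. So I would fix $m$ and aim to show that $k^{!}(f^{\Delta^{m}})$ is a local weak equivalence. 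Since $X$ and $Y$ are presheaves of quasi-categories and $\Delta^{m}$ is a simplicial set, $X^{\Delta^{m}}$ and $Y^{\Delta^{m}}$ are again presheaves of quasi-categories.

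The main obstacle is that $k^{!}$ does not commute with $p^{*}L^{2}$: the simplicial set $B\pi(\Delta^{m})$ computing $k^{!}$ is infinite, so \Cref{lem1.17} does not apply directly. I would circumvent this by replacing $k^{!}$ with $J$. By \Cref{lem1.11}, for any quasi-category the natural map $k^{!}(-) \to J(-)$ is a trivial fibration; applying this sectionwise to the presheaves of quasi-categories $X^{\Delta^{m}}$ and $Y^{\Delta^{m}}$ yields sectionwise trivial fibrations $k^{!}(X^{\Delta^{m}}) \to J(X^{\Delta^{m}})$ and $k^{!}(Y^{\Delta^{m}}) \to J(Y^{\Delta^{m}})$, which are in particular local weak equivalences. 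By naturality and two-out-of-three, it then suffices to prove that $J(f^{\Delta^{m}})$ is a local weak equivalence.

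This last claim I would establish by Boolean localization. Since $J$ commutes with $p^{*}L^{2}$ by \cite[Lemma 3.6]{Nick}, and since $\Delta^{m}$ is finite so that $p^{*}L^{2}(f^{\Delta^{m}}) \cong (p^{*}L^{2}f)^{\Delta^{m}}$ by \Cref{lem1.17}, there is a natural isomorphism $p^{*}L^{2}J(f^{\Delta^{m}}) \cong J\bigl((p^{*}L^{2}f)^{\Delta^{m}}\bigr)$. Now $p^{*}L^{2}f$ is a sectionwise Joyal equivalence of quasi-categories by \cite[Corollary 3.11]{Nick}, and cotensoring with $\Delta^{m}$ preserves Joyal equivalences of quasi-categories by \cite[Corollary 2.2.5.4]{Lurie}, so $(p^{*}L^{2}f)^{\Delta^{m}}$ is again a sectionwise Joyal equivalence of quasi-categories. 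Finally, $J$ carries a Joyal equivalence of quasi-categories to a weak homotopy equivalence: this I would deduce from \Cref{cor1.14} (so that $k^{!}$ is right Quillen and hence preserves Joyal equivalences between the fibrant objects of the Joyal model structure, i.e. between quasi-categories, by Ken Brown's lemma) together with \Cref{lem1.11} and two-out-of-three. Hence $J\bigl((p^{*}L^{2}f)^{\Delta^{m}}\bigr)$ is a sectionwise weak equivalence, so $p^{*}L^{2}J(f^{\Delta^{m}})$ is a sectionwise weak equivalence, and therefore $J(f^{\Delta^{m}})$ is a local weak equivalence because $p^{*}L^{2}$ reflects local weak equivalences (\cite[Lemma 4.15]{local}). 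Combining this with the reduction of the previous paragraph shows that $t^{!}(f)_{m*} = k^{!}(f^{\Delta^{m}})$ is a local weak equivalence for every $m$, as required.
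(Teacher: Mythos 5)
Your proof is correct, but it follows a genuinely different route from the paper's. The paper first uses functorial factorization (\cite[Example 3.16]{Nick}) to reduce to the case where $f$ is a sectionwise quasi-fibration (this reduction being legitimate precisely because $t^{!}$ preserves sectionwise Joyal equivalences of quasi-categories), then invokes \cite[Lemma 3.15]{Nick} to conclude that $f$ is a \emph{local trivial fibration}; from there everything is fibration bookkeeping: local trivial fibrations are preserved by $(-)^{\Delta^{n}}$ and by $J$ (\Cref{cor3.7}), and the conclusion follows from the comparison $k^{!}(f^{\Delta^{n}}) \rightarrow J(f^{\Delta^{n}})$. You perform the same final comparison via \Cref{lem1.11}, but you never reduce to fibrations at all: instead you push the weak equivalence itself through the Boolean localization, using \cite[Corollary 3.11]{Nick} to make $f$ a sectionwise Joyal equivalence, the commutation isomorphisms (\Cref{lem1.17} for the finite cotensor $(-)^{\Delta^{m}}$, \cite[Lemma 3.6]{Nick} for $J$) to move $p^{*}L^{2}$ inside, a Ken Brown argument built on \Cref{cor1.14} and \Cref{lem1.11} to see that $J$ sends Joyal equivalences of quasi-categories to weak homotopy equivalences, and finally reflection of local weak equivalences along $p^{*}L^{2}$. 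In effect you inline the Boolean-localization content that the paper keeps encapsulated in \Cref{cor3.7} and \cite[Lemma 3.15]{Nick}. What the paper's route buys is economy: once the fibration lemmas are in place, the proof is three lines, and no commutation isomorphisms or reflection statements need to be invoked explicitly. What your route buys is independence from the factorization trick and from the theory of local (trivial) fibrations, working directly at the level of weak equivalences; the price is that you must verify the naturality of the various isomorphisms and appeal to the cartesian structure of the Joyal model category for the cotensor step. Both arguments ultimately rest on the same two pillars --- the trivial fibration $k^{!} \rightarrow J$ on quasi-categories and descent of equivalences along the Boolean localization --- so each is a legitimate proof.
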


\begin{proof}
By functorial factorization (\cite[Example 3.16]{Nick}), we can assume that $f$ is a sectionwise quasi-fibration (since $t^{!}$ preserves Joyal equivalences of quasi-categories). Thus, $f$ is a local trivial fibration by \cite[Lemma 3.15]{Nick}. Thus, so are the maps $f^{\Delta^{n}}$. By \ref{cor3.7}, each $J(f^{\Delta^{n}})$ is a local trivial fibration. But $J(f^{\Delta^{n}})$ is sectionwise Joyal equivalent to $t^{!}(f)_{n*} = k^{!}(f^{\Delta^{n}})$.  
\end{proof}

\begin{theorem}\label{thm3.9}
There is a Quillen equivalence 
\begin{equation*}
t_{!} : s^{2} \textbf{Pre} (\mathscr{C}) \leftrightarrows  s \textbf{Pre}(\mathscr{C}) : t^{!}
\end{equation*}
\end{theorem}

\begin{proof}
If X is a fibrant object of the local Joyal model structure then it is a presheaf of quasi-categories and $t_{!}t^{!}(X) \rightarrow X$ is a sectionwise Joyal equivalence by \cite[Theorem 4.12]{JT1} (note that every object is cofibrant in the model structures involved).

We want to show that the natural map $X \rightarrow t^{!}\mathcal{L}_{Joyal}t_{!}(X)$ is a local complete Segal equivalence. There is a commutative diagram 
\begin{equation*}
\xymatrix{
X \ar[r] \ar[d] & t^{!}\mathcal{L}_{Joyal}t_{!}(X) \ar[d] \\
\mathcal{L}_{CSeg}(X) \ar[r] & t^{!}\mathcal{L}_{Joyal}t_{!}\mathcal{L}_{CSeg}(X)\\
}
\end{equation*}
The map $\mathcal{L}_{Joyal}t_{!}(X) \rightarrow \mathcal{L}_{Joyal}t_{!}\mathcal{L}_{CSeg}(X)$ is a local Joyal equivalence of presheaves of quasi-injective objects by \ref{lem3.5}. Thus, it is a sectionwise Joyal equivalence. It follows from \cite[Theorem 4.12]{JT1} that the right vertical map is a sectionwise complete Segal equivalence of presheaves of complete Segal spaces. In particular, it is a sectionwise Reedy, and hence local complete Segal equivalence. The left vertical map is a local complete Segal equivalence by definition. Thus, we may assume that $X$ is a presheaf of complete Segal spaces.

The map $\mathcal{S}_{Joyal}t_{!}(X) \rightarrow \mathcal{L}_{Joyal}t_{!}(X)$ is a local Joyal equivalence of presheaves of quasi-categories. Thus, $t^{!}\mathcal{S}_{Joyal}t_{!}(X) \rightarrow t^{!}\mathcal{L}_{Joyal} t_{!}(X)$ is a local complete Segal equivalence by \ref{lem3.8}. By \cite[Theorem 4.12]{JT1}, the map $X \rightarrow t^{!}\mathcal{S}_{Joyal}t_{!}(X)$ is a sectionwise complete Segal equivalence. It is also a sectionwise Reedy equivalence (since it is a map of presheaves of complete Segal spaces), and hence a local complete Segal equivalence. It follows that the map 
\begin{equation*}
X \rightarrow t^{!}t_{!}(X) \rightarrow t^{!}\mathcal{S}_{Joyal}t_{!}(X) \rightarrow t^{!}\mathcal{L}_{Joyal}t_{!}(X)
\end{equation*}
is a local complete Segal equivalence, as required. 

\end{proof}

\begin{lemma}\label{lem3.10}
$t^{!}$ preserves and reflects local Joyal equivalences of presheaves of quasi-categories.
\end{lemma}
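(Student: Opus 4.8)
The plan is to split the statement into its two halves, obtain preservation almost for free from what is already proved, and reserve the genuine argument for reflection, which I will handle by a short diagram chase through the counit.

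For preservation, suppose $f : X \to Y$ is a local Joyal equivalence of presheaves of quasi-categories. Then \Cref{lem3.8} already shows that $t^{!}(f)$ is a local Reedy equivalence. Since the local complete Segal model structure is the $\mathcal{F}$-local localization of the local Reedy structure, and local Reedy equivalences are $\mathcal{F}$-equivalences (the remark following \Cref{thm2.6}), it follows that $t^{!}(f)$ is a local complete Segal equivalence. Thus preservation is immediate.

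For reflection, suppose $f : X \to Y$ is a map of presheaves of quasi-categories with $t^{!}(f)$ a local complete Segal equivalence. I would first apply $t_{!}$. Because $t_{!}$ is the left adjoint of the Quillen equivalence of \Cref{thm3.9} and every object of both model structures is cofibrant, Ken Brown's lemma shows that $t_{!}$ carries local complete Segal equivalences to local Joyal equivalences; hence $t_{!}t^{!}(f)$ is a local Joyal equivalence. Next I would bring in the counit $\epsilon : t_{!}t^{!} \Rightarrow \mathrm{id}$ and use its naturality to form the square with vertical edges $t_{!}t^{!}(f)$ and $f$ and horizontal edges $\epsilon_{X}$ and $\epsilon_{Y}$. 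The key input is that for any presheaf of quasi-categories $Z$ the counit $\epsilon_{Z} : t_{!}t^{!}(Z) \to Z$ is a sectionwise Joyal equivalence: this is the sectionwise statement extracted from \Cref{thm1.15}, applied in each section $U$ to the quasi-category $Z(U)$ (which is Joyal-fibrant), exactly as in the proof of \Cref{thm3.9}. Since sectionwise Joyal equivalences are local Joyal equivalences, both $\epsilon_{X}$ and $\epsilon_{Y}$ are local Joyal equivalences, and the two-out-of-three property applied to the naturality square forces $f$ to be a local Joyal equivalence.

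The main obstacle is not the formal two-out-of-three step but correctly securing its two inputs. One must know that $t_{!}$ descends to \emph{all} local complete Segal equivalences, which is why the appeal to cofibrancy of every object together with Ken Brown's lemma is needed; and, more delicately, one must know that $\epsilon_{Z}$ is a sectionwise — and therefore local — Joyal equivalence for every presheaf of quasi-categories, not merely for locally fibrant objects. This last point is what lets the argument stay within presheaves of quasi-categories and avoid any fibrant replacement on $X$ and $Y$; it rests on the genuinely sectionwise nature of \Cref{thm1.15}. Once these facts are in place the reflection argument is a one-diagram chase.
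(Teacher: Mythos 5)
Your proof is correct, and its harder half --- reflection --- runs on exactly the same engine as the paper's own argument: the counit $t_{!}t^{!} \Rightarrow \mathrm{id}$ is a sectionwise (hence local) Joyal equivalence on presheaves of quasi-categories by sectionwise application of \Cref{thm1.15} (all objects being cofibrant), $t_{!}$ carries local complete Segal equivalences to local Joyal equivalences by \Cref{thm3.9} plus cofibrancy, and two-out-of-three in the counit naturality square finishes. Where you genuinely diverge is in the decomposition. The paper proves both directions simultaneously from a single diagram: it first establishes that $f$ is a local Joyal equivalence if and only if $t_{!}t^{!}(f)$ is, and then invokes that $t_{!}$ both preserves \emph{and reflects} local Joyal equivalences, so even the preservation direction of the lemma is routed through the reflection property of $t_{!}$ (which is where the full strength of the Quillen equivalence \Cref{thm3.9} is needed). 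You instead obtain preservation for free from \Cref{lem3.8} together with the observation that local Reedy equivalences are $\mathcal{F}$-equivalences; this is cleaner, needs only the Bousfield localization setup rather than \Cref{thm3.9}, and in fact yields the stronger conclusion that $t^{!}(f)$ is a local \emph{Reedy} equivalence, not merely a local complete Segal one. A further small gain of your route is that, by using the counit statement directly for arbitrary presheaves of quasi-categories, you bypass the sectionwise fibrant replacement appearing in the paper's diagram (which, as printed, does not even typecheck, since $\mathcal{S}_{Joyal}$ cannot be applied to the bisimplicial presheaf $t^{!}(X)$); your write-up is therefore, if anything, tighter than the original.
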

\begin{proof}
Consider the diagram
\begin{equation*}
\xymatrix
{
X \ar[r] \ar[d]_{f} & t_{!}t^{!} (X) \ar[r]_>>>>>{a} \ar[d]_{t_{!}t^{!}(f)} & t_{!}\mathcal{S}_{Joyal} t^{!} (X) \ar[d] \\
Y \ar[r] \ar[r] & t_{!}t^{!} (Y) \ar[r]_>>>>>{b} & t_{!}\mathcal{S}_{Joyal} t^{!} (Y)
}
\end{equation*}
The horizontal composites, $a$ and $b$ are all local Joyal equivalences. Thus, by 2 out of 3, the left horizonal maps are local Joyal equivalences.We conclude that $f$ is a local Joyal equivalence if and only if $t_{!}t^{!}(f)$ is a local Joyal equivalence. But $t_{!}$ preserves and reflects local Joyal equivalences. 
\end{proof}

\begin{corollary}\label{cor3.11}
A sectionwise complete Segal equivalence is a local complete Segal equivalence.
\end{corollary}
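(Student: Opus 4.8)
The plan is to transport the problem across the Quillen equivalence $t_! \dashv t^!$ of \Cref{thm3.9}. The crucial inputs are the fact, already isolated in the proof of \Cref{lem3.5}, that $t_!$ sends sectionwise complete Segal equivalences to sectionwise Joyal equivalences (via \Cref{thm1.15}), together with \Cref{lem3.10}, which lets me carry local Joyal equivalences of presheaves of quasi-categories across $t^!$ to local complete Segal equivalences. Conceptually the statement is just the assertion that the left adjoint of a Quillen equivalence reflects weak equivalences between cofibrant objects, and here every object is cofibrant because the cofibrations are exactly the monomorphisms; but I would prefer to phrase the argument through the named lemmas rather than invoke the abstract reflection principle.

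First I would apply $t_!$ to a sectionwise complete Segal equivalence $f: X \to Y$. Since $t_!$ is computed sectionwise, $(t_!f)(U) = t_!(f(U))$, and each $f(U)$ is a complete Segal equivalence of bisimplicial sets; because every bisimplicial set is cofibrant in the complete Segal model structure, \Cref{thm1.15} gives that each $t_!(f(U))$ is a Joyal equivalence. Hence $t_!(f)$ is a sectionwise, and therefore local, Joyal equivalence. Passing to local Joyal fibrant replacements $t_!(X) \to \mathcal{L}_{Joyal}t_!(X)$ and $t_!(Y) \to \mathcal{L}_{Joyal}t_!(Y)$ and applying two-out-of-three, the induced map $\mathcal{L}_{Joyal}t_!(f)$ is a local Joyal equivalence between presheaves of quasi-categories (the fibrant objects on the Joyal side being presheaves of quasi-categories).

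Next I would run this against the derived unit. The proof of \Cref{thm3.9} shows that the natural map $X \to t^!\mathcal{L}_{Joyal}t_!(X)$ is a local complete Segal equivalence for every $X$. Applying this to both $X$ and $Y$ and using naturality of the unit yields a commuting square whose two horizontal maps are local complete Segal equivalences and whose right-hand vertical map is $t^!\mathcal{L}_{Joyal}t_!(f)$. By \Cref{lem3.10}, this right-hand map is a local complete Segal equivalence, since $\mathcal{L}_{Joyal}t_!(f)$ is a local Joyal equivalence of presheaves of quasi-categories. Two-out-of-three then forces $f$ itself to be a local complete Segal equivalence.

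I expect the only genuine subtlety to be that $f$ is an arbitrary map, not a map between fibrant objects, so \Cref{lem2.7} is unavailable and the conclusion cannot be read off sectionwise; the passage from $t_!(f)$ being a local Joyal equivalence back to $f$ being a local complete Segal equivalence is exactly where the full strength of the Quillen \emph{equivalence} (and not merely the Quillen adjunction) of \Cref{thm3.9} is needed, together with the observation that every object is cofibrant. Everything else is formal two-out-of-three bookkeeping of equivalences already produced by \Cref{lem3.5}, \Cref{lem3.10}, and \Cref{thm1.15}.
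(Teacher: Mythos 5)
Your proposal is correct and takes essentially the same route as the paper: the paper likewise first applies $t_!$ to obtain a sectionwise, hence local, Joyal equivalence, and then concludes by the fact that $t_!$, as the left adjoint of the Quillen equivalence of \Cref{thm3.9} in which every object is cofibrant, reflects weak equivalences between cofibrant objects. The only difference is presentational: the paper invokes this reflection principle as a black box, whereas you unwind it explicitly via the derived unit $X \rightarrow t^!\mathcal{L}_{Joyal}t_!(X)$ from the proof of \Cref{thm3.9}, \Cref{lem3.10}, and two-out-of-three, which is precisely the standard proof of that principle specialized to this setting.
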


\begin{proof}
Let $f$ be a sectionwise complete Segal equivalence. Then $t_{!}(f)$ is a sectionwise Joyal equivalence, and hence a local Joyal equivalence. But $t_{!}$ reflects weak equivalences between cofibrant objects of the local Joyal model structure, as required. 
\end{proof}

\begin{corollary}\label{cor3.12}
 $p^{*}, L^{2}$ both preserve and reflect local complete Segal equivalences. 
\end{corollary}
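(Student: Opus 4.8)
The plan is to transfer the statement to the local Joyal model structure through the Quillen equivalence of \Cref{thm3.9}. Since cofibrations are monomorphisms, every object is cofibrant in both the local complete Segal and the local Joyal model structures, so the left Quillen equivalence $t_!$ not only preserves but also reflects weak equivalences between cofibrant objects; concretely, a map $f$ is a local complete Segal equivalence if and only if $t_!(f)$ is a local Joyal equivalence (this is already exploited in the proof of \Cref{cor3.11}). On the Joyal side the analogous facts are available from \cite{Nick}: $p^*L^2$ preserves and reflects local Joyal equivalences (\cite[Remark 3.8]{Nick} for reflection, and \cite[Corollary 3.11]{Nick} together with sectionwise fibrant replacement as in \Cref{lem3.2} for preservation). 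Moreover, since $p^*L^2(\eta_Z)$ is an isomorphism for the sheafification unit $\eta_Z : Z \to L^2 Z$, the map $\eta_Z$ is a local Joyal equivalence, whence $L^2$ preserves and reflects local Joyal equivalences as well. It remains to transport these facts across $t_!$.

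For $L^2$ I would argue directly inside the complete Segal world. Sheafification commutes with restriction to a fixed horizontal degree, so $(L^2 X)_{n*} \cong L^2(X_{n*})$, and the unit $X_{n*} \to L^2(X_{n*})$ is a local weak equivalence of simplicial presheaves. Hence $\eta_X : X \to L^2 X$ is a local Reedy equivalence and therefore, the local complete Segal structure being a localization of the local Reedy structure, a local complete Segal equivalence. Naturality of $\eta$ and the two-out-of-three property then give that $f$ is a local complete Segal equivalence if and only if $L^2(f)$ is, establishing both preservation and reflection for $L^2$.

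For $p^*$---equivalently for the composite $p^*L^2$, which restricts to $p^*$ on sheaves---I would run, abbreviating local complete Segal and local Joyal equivalences as l.c.S.\ and l.J.\ respectively, the chain
$$
f\ \text{l.c.S.} \iff t_!(f)\ \text{l.J.} \iff p^*L^2 t_!(f)\ \text{l.J.} \iff L^2 t_! p^*L^2(f)\ \text{l.J.} \iff t_! p^*L^2(f)\ \text{l.J.} \iff p^*L^2(f)\ \text{l.c.S.}
$$
The first and last equivalences are \Cref{thm3.9} applied over the sites $\mathscr{C}$ and $\mathscr{B}$ respectively; the second is the Joyal preservation/reflection property of $p^*L^2$; the third is the natural isomorphism $p^*L^2 t_! \cong L^2 t_! p^*L^2$ of \Cref{lem3.1}; and the fourth is the Joyal preservation/reflection property of $L^2$ recorded above. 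Together with the result for $L^2$, this yields the corollary.

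The main obstacle is precisely the third equivalence: $t_!$ does not strictly commute with $p^*$ and with sheafification, but only up to the isomorphism of \Cref{lem3.1}, which introduces an extra sheafification on the Joyal side. This is exactly why the auxiliary fact that $L^2$ preserves and reflects local Joyal equivalences is needed---to absorb that sheafification and close the chain. A minor point to verify is that \Cref{thm3.9} is being invoked over the Boolean site $\mathscr{B}$; this is legitimate since \Cref{thm3.9} holds over an arbitrary Grothendieck site, in particular over $\mathscr{B}$. Every remaining step is a direct citation of \Cref{thm3.9}, \Cref{lem3.1}, or the corresponding Joyal statements in \cite{Nick}.
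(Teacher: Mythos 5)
Your proposal is correct and follows essentially the same route as the paper: the paper's one-line proof likewise transfers the statement across the Quillen equivalence of \Cref{thm3.9} (using that $t_!$ preserves and reflects weak equivalences, all objects being cofibrant) and closes the loop with the commutation isomorphism of \Cref{lem3.1} (equations \ref{eq 2}/\ref{eq 3}) together with the corresponding Joyal-side facts from \cite{Nick}. You merely spell out the details the paper leaves implicit, such as the direct local-Reedy argument for $L^2$ and the observation that \Cref{thm3.9} applies over the Boolean site $\mathscr{B}$.
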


\begin{proof}
Follows from the fact that $t_{!}$ preserves and reflects local equivalences, \ref{eq 1} and \ref{eq 3}. 
\end{proof}

\begin{theorem}\label{thm3.13}
The category $s^{2}\textbf{Sh}(\mathscr{C})$, along with the class of local complete Segal equivalences, monomorphisms and Segal-injective fibrations, forms a left proper model structure.
Let $i$ denote the inclusion of bi-simplicial sheaves into bi-simplicial presheaves. There is a Quillen equivalence

\begin{equation*}
L^{2}: s^{2}\textbf{Pre} (\mathscr{C}) \leftrightarrows  s^{2} \textbf{Sh}(\mathscr{C}) : i
\end{equation*}

\end{theorem}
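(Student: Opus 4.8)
The plan is to build the model structure on $s^{2}\textbf{Sh}(\mathscr{C})$ by transferring the local complete Segal structure along the adjunction $L^{2} \dashv i$, and then to deduce the Quillen equivalence directly from \Cref{cor3.12}. I would declare a map $f$ of bisimplicial sheaves to be a cofibration if it is a monomorphism and a weak equivalence if $i(f)$ is a local complete Segal equivalence, with fibrations defined by the right lifting property against trivial cofibrations. Because $L^{2}i \cong \mathrm{id}$ and, by \Cref{cor3.12}, $L^{2}$ preserves and reflects local complete Segal equivalences, the classes so defined coincide with those induced by $L^{2}$, and closure under $2$-out-of-$3$, retracts and the usual stability properties is inherited from the presheaf structure. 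Note that every object is cofibrant, since $\emptyset \to X$ is a monomorphism, which will simplify the equivalence argument below.

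The heart of the matter is the factorization axiom, which I would produce from the small object argument once a set of generating trivial cofibrations is available. For this I would establish a bounded cofibration lemma for bisimplicial sheaves, and the crucial input is the exactness of $L^{2}$: it preserves finite limits (hence monomorphisms and intersections) as well as, by \Cref{cor3.12}, weak equivalences. Concretely, given a trivial cofibration $A \to B$ of sheaves and an $\alpha$-bounded subsheaf $C \subseteq B$, the map $i(A) \to i(B)$ is a presheaf trivial cofibration, so the bounded cofibration lemma of \cite[Chapter 7]{local} applied to the family $\mathcal{F}$ of \Cref{def2.8} yields an $\alpha$-bounded presheaf $D'$ with $i(C) \subseteq D' \subseteq i(B)$ and $D' \cap i(A) \to D'$ a trivial cofibration. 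Setting $D := L^{2}(D')$ and using exactness, $D$ is an $\alpha$-bounded subsheaf of $B$ containing $C$ with $D \cap A = L^{2}(D' \cap i(A)) \to D$ a trivial cofibration. Here $\alpha$ must be chosen large enough that sheafification preserves $\alpha$-boundedness, and this cardinality control is the one genuinely delicate point. The $\alpha$-bounded cofibrations and trivial cofibrations then generate the respective classes, and the small object argument delivers both factorizations.

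The lifting axioms follow in the usual way: one half holds by the definition of fibration, and for the other half a retract argument identifies the trivial fibrations with the fibrations that are weak equivalences. Left properness I would transfer from the presheaf structure, which is left proper as an $\mathcal{F}$-localization of the left proper local Reedy structure. Given a pushout of sheaves along a cofibration $X \to Z$ with $X \to Y$ a weak equivalence, I would form the corresponding pushout $P$ of $i(X) \to i(Z)$ along $i(X) \to i(Y)$ in presheaves; left properness there makes $i(Z) \to P$ a local complete Segal equivalence, and applying $L^{2}$, which carries $P$ to the sheaf pushout and preserves weak equivalences, gives the conclusion.

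Finally, the Quillen equivalence is essentially formal once \Cref{cor3.12} is invoked. Since $L^{2}$ preserves monomorphisms and local complete Segal equivalences, it preserves cofibrations and trivial cofibrations, so $(L^{2}, i)$ is a Quillen adjunction. By construction $i$ preserves and reflects weak equivalences, so in particular it reflects them between fibrant objects. For the derived unit, since $i$ preserves weak equivalences it reduces to showing that the unit $\eta_{X} : X \to i L^{2}(X)$ is a weak equivalence for every $X$; but $L^{2}(\eta_{X})$ is an isomorphism by idempotency of sheafification, and $L^{2}$ reflects local complete Segal equivalences, so $\eta_{X}$ is one. Hence $(L^{2}, i)$ is a Quillen equivalence. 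I expect the bounded cofibration lemma, and specifically the cardinality control of sheafification, to be the main obstacle, with every other step reducing cleanly to the presheaf case through the exactness and reflection properties of $L^{2}$.
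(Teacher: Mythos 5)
Your proposal is correct and is essentially the paper's argument: both define the sheaf-level structure so that weak equivalences and cofibrations are created by the inclusion $i$, inherit CM1--CM4 from the presheaf-level local complete Segal structure, prove the factorization axiom CM5 by a cardinality argument followed by the small object argument, and obtain the Quillen equivalence from \Cref{cor3.12} together with the fact that the unit $X \rightarrow iL^{2}(X)$ is a weak equivalence while the counit is an isomorphism (the paper observes the unit is a local Reedy, hence local complete Segal, equivalence; you instead note that $L^{2}$ inverts it and reflects equivalences---both work). The only technical divergence is in CM5: you re-derive a bounded cofibration lemma for bisimplicial sheaves from the presheaf one of \cite[Chapter 7]{local}, using exactness of $L^{2}$ and a cardinal large enough that sheafification preserves boundedness, whereas the paper transfers the fibration-detection statement directly---by \cite[Lemma 7.4]{local} Segal-injective fibrations are detected by lifting against $\alpha$-bounded trivial cofibrations, one enlarges $\alpha$ to a $\beta$ absorbing $L^{2}$, and then runs a small object argument of size $2^{\beta}$ as in \cite[Lemma 5.7]{local}. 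These come down to the same cardinality bookkeeping; your route is more self-contained, the paper's is shorter because it pushes the Zorn-type argument into the citation. Two small points: you should make explicit that your RLP-defined fibrations coincide with the Segal-injective fibrations named in the statement (for maps of sheaves, lifting against a presheaf trivial cofibration $j$ is equivalent by adjunction to lifting against $L^{2}(j)$, which is a sheaf trivial cofibration since $L^{2}$ preserves monomorphisms and, by \Cref{cor3.12}, weak equivalences); and your explicit left-properness argument actually covers a claim that the paper's own proof asserts but never addresses, though in both settings one could simply invoke the standard fact that a model structure in which every object is cofibrant is left proper.
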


\begin{proof}

The associated sheaf functor preserves and reflects local complete Segal equivalences and also preserves cofibrations. Hence, the inclusion functor preserves Segal-injective fibrations. Thus, the functors form a Quillen pair. The unit map of the adjunction $X \rightarrow L^{2}(X)$ is a local Reedy, and hence complete Segal equivalence, and the counit map is the identity. Thus, if we prove the first statement, we have the second.

 Axiom CM1 follows from completeness and cocompleteness of the sheaf category. Axioms CM2-CM4 follow from the corresponding statements for local complete Segal model structure on $s\textbf{Pre}(\mathscr{C})$.
By \cite[Lemma 7.4]{local}, there exists a regular cardinal $\alpha$ so that a map is a fibration in the complete Segal model structure if and only if it has the right lifting property with respect to $\alpha$-bounded trivial cofibrations. Choose a regular cardinal $\beta$ so that $L^{2}(f)$ is $\beta$ bounded for each $\alpha$-bounded $f$. Then a sheaf map $f$ is a Segal-injective fibration if and only if it has the right lifting property with respect to all $\beta$-bounded trivial cofibration. Doing a small object argument of size $2^{\beta}$, as in \cite[Lemma 5.7]{local}, gives CM5. 
\end{proof}

\begin{theorem}\label{thm3.14}
There is a Quillen equivalence 
\begin{equation*}
L^{2}t_{!}: s^{2} \textbf{Sh} (\mathscr{C}) \leftrightarrows  s \textbf{Sh}(\mathscr{C}) : t^{!}
\end{equation*}
\end{theorem}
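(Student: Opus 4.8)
The plan is to realize the adjunction $(L^{2}t_{!}, t^{!})$ on sheaves as a composite of Quillen equivalences already in hand, and then conclude via the characterization of Quillen equivalences through the induced equivalence of homotopy categories. First I would record that the stated pair really is an adjunction. Writing $i$ for the inclusion $s^{2}\textbf{Sh}(\mathscr{C}) \hookrightarrow s^{2}\textbf{Pre}(\mathscr{C})$, the functor $L^{2}t_{!}$ is by definition $L^{2} \circ t_{!} \circ i$. Its right adjoint is $t^{!}$ restricted to sheaves, which makes sense because $t^{!}$ carries sheaves to sheaves: by \Cref{exam1.16} we have $t^{!}(Y)_{m,*} = k^{!}\textbf{hom}(\Delta^{m}, Y)$, and both the cotensor $\textbf{hom}(\Delta^{m}, -)$ and $k^{!}$ are assembled from finite limits (note that $\Delta^{m} \times \Delta^{n}$ and $B\pi(\Delta^{n})$ are finite simplicial sets), which preserve the sheaf condition. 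The adjunction isomorphism then follows by chaining $L^{2} \dashv i$ and $t_{!} \dashv t^{!}$ and using that the inclusion of sheaves is full:
\[
hom(L^{2}t_{!}iX, Y) \cong hom(t_{!}iX, iY) \cong hom(iX, t^{!}iY) \cong hom(X, t^{!}Y).
\]

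Next I would verify that $(L^{2}t_{!}, t^{!})$ is a Quillen pair by checking that $L^{2}t_{!}i$ preserves cofibrations and trivial cofibrations. The inclusion $i$ preserves monomorphisms trivially, and it preserves local complete Segal equivalences: if $f$ is a weak equivalence of sheaves then $L^{2}if \cong f$ is a weak equivalence, so $if$ is a weak equivalence of presheaves because $L^{2}$ reflects them by \Cref{cor3.12}. Hence $i$ sends cofibrations and trivial cofibrations of the sheaf-level local complete Segal structure to cofibrations and trivial cofibrations of the presheaf-level structure. Postcomposing with the left Quillen functors $t_{!}$ (\Cref{thm3.9}) and $L^{2}$ (for the simplicial sheaf local Joyal structure, the analogue of \Cref{thm3.13} established in \cite{Nick}) shows that $L^{2}t_{!}i$ is left Quillen.

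For the equivalence I would argue on homotopy categories. In every model structure in play each object is cofibrant, so the total left derived functors of $t_{!}$ and $L^{2}$ coincide with the underived functors; moreover $L^{2}$ preserves all weak equivalences (\Cref{cor3.12}) and $i$ preserves all weak equivalences by the argument above, so each of these descends directly to homotopy categories, and in the case of $i$ this descended functor computes $\mathbb{R}i$. Consequently the left derived functor of $L^{2}t_{!}i$ factors as the composite
\[
\mathrm{Ho}(s^{2}\textbf{Sh}(\mathscr{C})) \xrightarrow{\mathbb{R}i} \mathrm{Ho}(s^{2}\textbf{Pre}(\mathscr{C})) \xrightarrow{\mathbb{L}t_{!}} \mathrm{Ho}(s\textbf{Pre}(\mathscr{C})) \xrightarrow{\mathbb{L}L^{2}} \mathrm{Ho}(s\textbf{Sh}(\mathscr{C})).
\]
Here $\mathbb{R}i$ is an equivalence by \Cref{thm3.13}, $\mathbb{L}t_{!}$ is an equivalence by \Cref{thm3.9}, and $\mathbb{L}L^{2}$ is an equivalence by the simplicial sheaf analogue of \Cref{thm3.13}. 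A composite of equivalences of categories is an equivalence, so the left derived functor of $L^{2}t_{!}$ is an equivalence of homotopy categories, which is precisely the condition for the Quillen pair $(L^{2}t_{!}, t^{!})$ to be a Quillen equivalence.

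I expect the main obstacle to be the bookkeeping around the inclusion $i$, which appears as a right adjoint rather than a left adjoint: one must confirm that, because $i$ preserves all weak equivalences, its derived functor needs no fibrant replacement and genuinely realizes the equivalence $\mathbb{R}i$ coming from \Cref{thm3.13}, so that the three-fold composite above actually computes $\mathbb{L}(L^{2}t_{!})$ rather than some lax comparison. The remaining care lies in pinning down the sheaf-level local Joyal model structure on $s\textbf{Sh}(\mathscr{C})$ together with its Quillen equivalence $L^{2} \dashv i$ with presheaves, which parallels \Cref{thm3.13} but for the Joyal localization in place of the complete Segal one.
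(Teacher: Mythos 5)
Your proposal is correct and takes essentially the same route as the paper: the paper's one-line proof (``immediate from \Cref{thm3.9}, and the fact that $t^{!}$ commutes with sheafification'') is precisely the reduction you carry out, with the presheaf--sheaf comparisons (\Cref{thm3.13} and its local Joyal analogue from \cite{Nick}) serving as the implicit bridge. Your expansion---checking that $t^{!}$ preserves sheaves so the adjunction makes sense, and then composing the three derived equivalences to verify the Quillen-equivalence criterion---is a faithful formalization of what the paper leaves unstated.
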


\begin{proof}
Immediate from \ref{thm3.9}, and the fact that $t^{!}$ commutes with sheafification by equation \ref{eq 1}.  
\end{proof}

\section{Descent Results}\label{descent}

\begin{definition}\label{def4.1} 
One says that a simplicial presheaf (respectively bi-simplicial presheaf, respectively simplicial presheaf) $X$ \textbf{satisfies descent} for the injective (respectively local complete Segal, local Joyal) model structure if and only if $X \rightarrow \mathcal{L}_{inj}(X)$ (respectively $X \rightarrow \mathcal{L}_{CSeg}(X)$, $ X \rightarrow \mathcal{L}_{Joyal}(X)$) is a sectionwise weak equivalence (respectively sectionwise complete Segal equivalence, sectionwise Joyal equivalence). 
\end{definition}

\begin{lemma}\label{lem4.2}
Let $S$ be a simplicial set. $(-)^{S}$ preserves quasi-injective fibrations. 
\end{lemma}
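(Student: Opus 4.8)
The plan is to verify the defining right lifting property of quasi-injective fibrations directly, transposing across the tensor--cotensor adjunction. Since the cofibrations of the local Joyal model structure are exactly the monomorphisms, a map is a quasi-injective fibration precisely when it has the right lifting property with respect to every local Joyal trivial cofibration. So let $f : X \rightarrow Y$ be a quasi-injective fibration; it suffices to show that $f^{S}$ has the right lifting property with respect to an arbitrary local Joyal trivial cofibration $j : A \rightarrow B$.

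First I would invoke the adjunction $hom(A \times S, X) \cong hom(A, X^{S})$, natural in the simplicial presheaf $A$ and in $X$, where $X^{S}(U) = \textbf{hom}(S, X(U))$ and $S$ denotes the constant presheaf. Under this bijection, a lifting problem of $j$ against $f^{S}$ transposes to a lifting problem of $j \times \mathrm{id}_{S} : A \times S \rightarrow B \times S$ against $f$, compatibly with the commutativity and lifting conditions. Hence $f^{S}$ has the right lifting property with respect to $j$ if and only if $f$ has the right lifting property with respect to $j \times \mathrm{id}_{S}$.

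It remains to show that $j \times \mathrm{id}_{S}$ is again a local Joyal trivial cofibration. It is a monomorphism, since $j$ is a monomorphism and product with the constant presheaf $S$ preserves monomorphisms levelwise, so it is a cofibration. It is a local Joyal equivalence by the first statement of \Cref{lem3.3}, applied with the local Joyal equivalence $j : A \rightarrow B$ and $C = S$ (using the identity cofibration $\mathrm{id}_{S}$ in the slot of \Cref{lem3.3}), which gives that $A \times S \rightarrow B \times S$ is a local Joyal equivalence. Therefore $j \times \mathrm{id}_{S}$ is a local Joyal trivial cofibration, and because $f$ is a quasi-injective fibration it lifts against it; transposing back yields the desired lift of $j$ against $f^{S}$. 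As $j$ was an arbitrary local Joyal trivial cofibration, $f^{S}$ is a quasi-injective fibration.

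The argument presents no serious obstacle; the only points requiring care are pinning down the precise form of the adjunction, so that the cotensor $X^{S}$ is genuinely right adjoint to product with the constant presheaf $S$, and confirming that \Cref{lem3.3} applies with $C = S$ (every object being cofibrant here) rather than needing a nontrivial cofibration in that position.
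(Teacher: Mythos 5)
Your proof is correct and is essentially the paper's own argument: the paper's proof is the one-line observation that the lemma ``follows from \Cref{lem3.3} since $(-)^{S}$ is right adjoint to $- \times S$.'' You have simply made explicit the transposition of lifting problems across that adjunction and the verification that $j \times \mathrm{id}_{S}$ is a local Joyal trivial cofibration, which is exactly what the paper leaves implicit.
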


\begin{proof}
Follows from \ref{lem3.3} since $(-)^{S}$ is right adjoint to $- \times S$. 
\end{proof}

\begin{lemma}\label{lem4.3}
Let $X$ be a fibrant object in the local Reedy model structure on $s^{2}\textbf{Pre}(\mathscr{C})$ (c.f. \ref{exam2.3}). Then $X_{n*}$ is a fibrant object in the injective model structure.
\end{lemma}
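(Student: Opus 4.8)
The plan is to verify directly that $X_{n*}$ has the right lifting property against every trivial cofibration of simplicial presheaves in the injective model structure, which is exactly the criterion for injective fibrancy. So I fix an arbitrary injective trivial cofibration $j : A \to B$ in $s\textbf{Pre}(\mathscr{C})$ together with a commutative square having $A \to X_{n*}$ along the top and $j$ down the left side, and I must produce a diagonal filler $B \to X_{n*}$.

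The crux is to transpose this lifting problem into one about $X$ in bisimplicial presheaves. Writing $\Delta^n \tilde{\times} A$ for the bisimplicial presheaf with $(\Delta^n \tilde{\times} A)_{m,k} = \Delta^n_m \times A_k$ (that is, $\Delta^n$ placed in the horizontal direction and $A$ in the vertical direction, as in the generating trivial cofibrations of \Cref{exam3.4}), the identification $X_{n*} \cong \textbf{hom}(F(n), X)$ of \Cref{rmk1.5}, combined with the tensor–hom adjunction in the horizontal variable, shows that $A \mapsto \Delta^n \tilde{\times} A$ is left adjoint to $X \mapsto X_{n*}$. Consequently a map $A \to X_{n*}$ is the same datum as a map $\Delta^n \tilde{\times} A \to X$, and the given square corresponds to the problem of extending $\Delta^n \tilde{\times} A \to X$ along $\Delta^n \tilde{\times} j : \Delta^n \tilde{\times} A \to \Delta^n \tilde{\times} B$.

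It then suffices to show that $\Delta^n \tilde{\times} j$ is a trivial cofibration for the local Reedy model structure of \Cref{rmk2.5}, since $X$ is local Reedy fibrant and hence lifts against all such maps. Because local Reedy cofibrations and weak equivalences are detected in each horizontal degree $m$, I compute $(\Delta^n \tilde{\times} A)_{m*} = \coprod_{\Delta^n_m} A$ and likewise for $B$, so that $(\Delta^n \tilde{\times} j)_{m*}$ is the finite coproduct $\coprod_{\Delta^n_m} j$ of copies of $j$ indexed by the finite set $\Delta^n_m$. A finite coproduct of monomorphisms is a monomorphism and a finite coproduct of local weak equivalences is a local weak equivalence, so $(\Delta^n \tilde{\times} j)_{m*}$ is an injective trivial cofibration for every $m$; thus $\Delta^n \tilde{\times} j$ is a local Reedy trivial cofibration. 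The required filler exists by fibrancy of $X$, and transposing back yields the lift $B \to X_{n*}$.

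I do not anticipate a serious obstacle: the only point demanding care is making the adjunction $\bigl(A \mapsto \Delta^n \tilde{\times} A\bigr) \dashv \bigl(X \mapsto X_{n*}\bigr)$ and the transposition of lifting squares precise, which follows from \Cref{rmk1.5}. Everything else is a levelwise check straight from the definition of the local Reedy structure.
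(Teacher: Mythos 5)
Your proof is correct, and it takes a genuinely more hands-on route than the one in the paper. The paper disposes of the lemma in two lines using the fibred-site formalism of Section 2: under the identification $s^{2}\textbf{Pre}(\mathscr{C}) \cong s\textbf{Pre}(\mathscr{C}/\Delta^{op})$, the functor $X \mapsto X_{n*}$ is the direct image $(s_{n})_{*}$ along the site morphism $s_{n}: \mathscr{C} \cong \mathscr{C}/\ast \rightarrow \mathscr{C}/\Delta^{op}$ given by inclusion of the $n$th vertex, and \cite[Corollary 5.24]{local} asserts that such direct images are right Quillen for the injective model structures, hence preserve fibrant objects. Your argument unwinds exactly what that citation conceals: the left adjoint of $(-)_{n*}$ is $\Delta^{n}\tilde{\times}(-)$ (this is precisely the inverse image functor of $s_{n}$, as one sees by computing the left Kan extension along $s_{n}$), and you verify by hand, via the horizontal-levelwise identification $(\Delta^{n}\tilde{\times} j)_{m*} = \coprod_{\Delta^{n}_{m}} j$, that this left adjoint carries injective trivial cofibrations to local Reedy trivial cofibrations, which is the Quillen-type condition that makes your transposition of lifting problems work. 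So the two proofs rest on the same underlying adjunction; yours is elementary and self-contained, while the paper's is shorter but leans on the general theory of site morphisms already set up for \Cref{thm2.4}. One small point worth tightening: you assert that a finite coproduct of local weak equivalences is a local weak equivalence. This is true (all objects are cofibrant in the injective model structure, and the coproduct functor is left Quillen for the product model structure), but in your situation there is a cleaner justification: the maps you are coproducting are injective \emph{trivial cofibrations}, and any class of maps defined by a left lifting property --- here, against injective fibrations --- is closed under coproducts, so $\coprod_{\Delta^{n}_{m}} j$ is itself an injective trivial cofibration with no further argument needed.
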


\begin{proof}
Consider the site morphism

\begin{equation*}
s_{n}: \mathscr{C} \cong \mathscr{C} / * \xrightarrow{n} \mathscr{C} / \Delta^{op}
\end{equation*}
where the latter map is inclusion of the nth vertex. By \cite[Corollary 5.24]{local}, the functor 
\begin{equation*}
(s_{n})_{*}: s\textbf{Pre}(\mathscr{C}/\Delta^{op}) \rightarrow s\textbf{Pre}(\mathscr{C})
\end{equation*}
is a right adjoint of a Quillen adjunction, and hence preserves fibrant objects. But $(s_{n})_{*}(X) = X_{n*}$. 
\end{proof}

\begin{lemma}\label{lem4.4}
If X is a presheaf of complete Segal spaces, then its local Reedy fibrant replacement (i.e. injective fibrant replacement under the identification of \ref{exam2.3}) $\mathcal{L}_{inj}(X)$ is Segal-injective fibrant. In particular, $X$ satisfies descent for the injective model structure if and only if it satisfies descent for the local complete Segal model structure. 
\end{lemma}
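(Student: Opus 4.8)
The plan is to prove the first assertion—that $\mathcal{L}_{inj}(X)$ is Segal-injective fibrant—and then to extract the descent equivalence from it as a formal consequence. Throughout, write $\underline{\textbf{hom}}(K, Z)$ for the simplicial presheaf $U \mapsto \textbf{hom}(K, Z(U))$ of bisimplicial mapping spaces. By construction $\mathcal{L}_{inj}(X)$ is local Reedy fibrant, equivalently injective fibrant under the identification of \Cref{exam2.3}. Recall that the Segal-injective fibrant objects are exactly the $\mathcal{F}$-injective ones (\cite[Corollary 7.12]{local}) and that, by the enriched localization theory underlying \Cref{thm2.6} and \Cref{def2.8}, an injective fibrant $Z$ is $\mathcal{F}$-injective precisely when $\underline{\textbf{hom}}(F(n), Z) \to \underline{\textbf{hom}}(G(n), Z)$ and $\underline{\textbf{hom}}(I, Z) \to \underline{\textbf{hom}}(F(0), Z)$ are local weak equivalences. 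Since a sectionwise weak equivalence is in particular a local weak equivalence, it therefore suffices to show that each section $\mathcal{L}_{inj}(X)(U)$ is a complete Segal space; equivalently, that injective fibrant replacement carries the presheaf of complete Segal spaces $X$ to another presheaf of complete Segal spaces (compare the necessary direction in \Cref{rmk2.9}).

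This last point is the heart of the matter, and the step I expect to be the main obstacle: injective fibrant replacement is not a sectionwise construction, so the complete Segal space structure on the sections of $X$ cannot be transported to $\mathcal{L}_{inj}(X)$ directly. I would handle it by Boolean localization. The bisimplicial sets $F(n), G(n), F(0), I$ are finite, so \Cref{lem1.17} lets $p^{*}L^{2}$ be commuted past the four mapping-space functors $\underline{\textbf{hom}}(F(n), -)$, $\underline{\textbf{hom}}(G(n), -)$, $\underline{\textbf{hom}}(I, -)$, $\underline{\textbf{hom}}(F(0), -)$. Because $X$ is a presheaf of complete Segal spaces, the maps $\underline{\textbf{hom}}(F(n), X) \to \underline{\textbf{hom}}(G(n), X)$ and $\underline{\textbf{hom}}(I, X) \to \underline{\textbf{hom}}(F(0), X)$ are sectionwise weak equivalences by \Cref{exam1.8}. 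The natural map $X \to \mathcal{L}_{inj}(X)$ is a local Reedy equivalence, so $p^{*}L^{2}$ carries it to a sectionwise Reedy equivalence; since cotensoring by a finite bisimplicial set preserves sectionwise Reedy equivalences (after sectionwise Reedy fibrant replacement where necessary), applying the mapping-space functors and using that $p^{*}L^{2}$ both commutes with them and reflects local weak equivalences transfers the sectionwise complete Segal structure across $X \to \mathcal{L}_{inj}(X)$. One concludes that each $\mathcal{L}_{inj}(X)(U)$ is $S$-local, hence a complete Segal space. (Alternatively one phrases this as showing directly that $p^{*}L^{2}\mathcal{L}_{inj}(X)$ is sectionwise a complete Segal space and invokes \Cref{cor3.12}.)

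Granting the first assertion, the descent equivalence follows formally. Since $X \to \mathcal{L}_{inj}(X)$ is a local Reedy equivalence it is a local complete Segal equivalence, and its target is now known to be Segal-injective fibrant; thus $\mathcal{L}_{inj}(X)$ is a fibrant replacement of $X$ in the local complete Segal model structure. Lifting $X \to \mathcal{L}_{inj}(X)$ through the trivial cofibration $X \to \mathcal{L}_{CSeg}(X)$ produces a commuting factorization $X \xrightarrow{a} \mathcal{L}_{CSeg}(X) \xrightarrow{c} \mathcal{L}_{inj}(X)$ in which, by two-out-of-three, $c$ is a local complete Segal equivalence between Segal-injective fibrant objects, hence a sectionwise weak equivalence by \Cref{lem2.7}. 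By \Cref{rmk2.9} both $X$ and $\mathcal{L}_{inj}(X)$ are presheaves of complete Segal spaces. Now $X$ satisfies descent for the injective model structure iff $c \circ a$ is a sectionwise Reedy equivalence, while $X$ satisfies descent for the local complete Segal model structure iff $a$ is a sectionwise complete Segal equivalence. If $c \circ a$ is a sectionwise Reedy equivalence then it is a sectionwise complete Segal equivalence, and since $c$ is one, two-out-of-three gives that $a$ is a sectionwise complete Segal equivalence. Conversely, if $a$ is a sectionwise complete Segal equivalence then so is $c \circ a$; being a complete Segal equivalence between presheaves of complete Segal spaces, it is sectionwise a complete Segal equivalence between complete Segal spaces, hence a sectionwise Reedy equivalence. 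This yields the stated equivalence of descent conditions.
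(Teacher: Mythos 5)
Your proposal is correct in substance and shares the paper's overall skeleton --- both arguments reduce Segal-injective fibrancy of the injective fibrant $\mathcal{L}_{inj}(X)$ to showing that the locality maps $\mathcal{L}_{inj}(X)^{F(n)} \rightarrow \mathcal{L}_{inj}(X)^{G(n)}$ and $\mathcal{L}_{inj}(X)^{I} \rightarrow \mathcal{L}_{inj}(X)^{F(0)}$ are local weak equivalences, and both obtain this by a two-out-of-three transfer of the sectionwise locality of $X$ across the local Reedy equivalence $X \rightarrow \mathcal{L}_{inj}(X)$ --- but the engine driving the transfer is different. The paper stays over the site $\mathscr{C}$: it observes that the cotensor $(-)^{A}$ preserves local trivial fibrations, hence (via the functorial factorization of \cite[pg. 93]{local}) preserves local weak equivalences of presheaves of Kan complexes, so the horizontal maps $X^{K} \rightarrow \mathcal{L}_{inj}(X)^{K}$ are local weak equivalences directly. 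You instead push everything to the Boolean localization with \Cref{lem1.17}, where the local Reedy equivalence and the locality maps of $X$ become sectionwise, run the two-out-of-three argument sectionwise there, and pull the conclusion back by reflection of local weak equivalences along $p^{*}L^{2}$. This is a legitimate alternative (indeed it is the standard way such cotensor-preservation lemmas are proved), at the cost of needing $p^{*}L^{2}$ to preserve sectionwise Reedy fibrancy and sectionwise $S$-locality, which your hedge ``after sectionwise Reedy fibrant replacement where necessary'' glosses over; in fact no replacement is needed, since $X$ and $\mathcal{L}_{inj}(X)$ are already sectionwise Reedy fibrant. One genuine overstatement: your Boolean argument, via reflection, yields only that the locality maps of $\mathcal{L}_{inj}(X)$ are \emph{local} weak equivalences over $\mathscr{C}$, not that each section $\mathcal{L}_{inj}(X)(U)$ is a complete Segal space; upgrading to the sectionwise statement would require noting additionally that these mapping objects are injective fibrant (so that \Cref{lem2.7} applies). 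This does not damage the proof, because the local statement is exactly what your own stated criterion for $\mathcal{F}$-injectivity requires --- but the final sentence of your second paragraph should be weakened accordingly, and the parenthetical appeal to \Cref{cor3.12} does not help here, since fibrancy is not reflected by $p^{*}L^{2}$. Finally, your third paragraph supplies a careful deduction of the descent equivalence (lift through $X \rightarrow \mathcal{L}_{CSeg}(X)$, apply \Cref{lem2.7} and \Cref{rmk2.9}, compare sectionwise Reedy and sectionwise complete Segal equivalences between presheaves of complete Segal spaces); the paper leaves this ``in particular'' entirely implicit, so this part of your write-up is a genuine addition.
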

\begin{proof}
Consider the presheaf maps 
\begin{equation*}
\xymatrix{
X^{G(n)} \ar[r] \ar[d] & \mathcal{L}_{inj}(X)^{G(n)} \ar[d] \\
X^{F(n)} \ar[r] & \mathcal{L}_{inj}(X)^{F(n)}
}
\end{equation*}

\begin{equation*}
\xymatrix{
X^{I} \ar[r] \ar[d] & \mathcal{L}_{inj}(X)^{I} \ar[d] \\
X^{F(0)} \ar[r] & \mathcal{L}_{inj}(X)^{F(0)}
}
\end{equation*}
To show that $\mathcal{L}_{inj}(X)$ is Segal-injective fibrant, it suffices to show that the right vertical maps in the above diagram are local weak equivalences. 
The left vertical maps are sectionwise Reedy equivalences. The maps $X \rightarrow \mathcal{L}_{inj}(X)$ can be identified with a local weak equivalence of presheaves of Kan complexes. Since $(-)^{A}$ preserves local trivial fibrations, it preserves local weak equivalences of presheaves of Kan complexes by the functorial factorization of \cite[pg. 93]{local}. Thus, the horizontal maps in the above diagram are all local Reedy equivalences. Thus, by 2 out of 3, the right vertical maps are local weak equivalences, as required. 
\end{proof}

\begin{lemma}\label{lem4.5}
Let $X$ and $Y$ be presheaves of quasi-categories. A map $f: X \rightarrow Y$ is a local Joyal equivalence if and only if for all $n \in \mathbb{N}$
\begin{equation*}
J(X^{\Delta^{n}}) \rightarrow J(Y^{\Delta^{n}})
\end{equation*}
is a local weak equivalence.
\end{lemma}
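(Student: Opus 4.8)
The plan is to reduce the stated condition to the single assertion that $t^{!}(f)$ is a local Reedy equivalence, and then to transport this across the Quillen equivalence of \Cref{thm3.9}. The crucial ingredient is equation \ref{eq 1} of \Cref{exam1.16}, which gives $t^{!}(f)_{n*} = k^{!}(f^{\Delta^{n}})$; together with \Cref{rmk2.5}, this says that $t^{!}(f)$ is a local Reedy equivalence precisely when each $k^{!}(f^{\Delta^{n}})$ is a local weak equivalence. Thus the whole problem reduces to comparing $k^{!}(f^{\Delta^{n}})$ with $J(f^{\Delta^{n}})$ and then identifying the resulting condition with local Joyal equivalence of $f$.

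First I would observe that since quasi-categories are closed under $(-)^{\Delta^{n}}$, both $X^{\Delta^{n}}$ and $Y^{\Delta^{n}}$ are presheaves of quasi-categories. Hence by \Cref{lem1.11} the canonical comparison $k^{!}(X^{\Delta^{n}}) \rightarrow J(X^{\Delta^{n}})$ is a sectionwise trivial fibration, and likewise for $Y$; in particular both are local weak equivalences. These comparisons are natural in maps of quasi-categories, so one obtains a commutative square
$$
\xymatrix{
k^{!}(X^{\Delta^{n}}) \ar[r] \ar[d]_{k^{!}(f^{\Delta^{n}})} & J(X^{\Delta^{n}}) \ar[d]^{J(f^{\Delta^{n}})} \\
k^{!}(Y^{\Delta^{n}}) \ar[r] & J(Y^{\Delta^{n}})
}
$$
whose horizontal maps are local weak equivalences. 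By the two-out-of-three property, $J(f^{\Delta^{n}})$ is a local weak equivalence if and only if $k^{!}(f^{\Delta^{n}}) = t^{!}(f)_{n*}$ is. Combining this with \Cref{rmk2.5}, the stated condition (for all $n$) is equivalent to $t^{!}(f)$ being a local Reedy equivalence.

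It then remains to identify ``$t^{!}(f)$ is a local Reedy equivalence'' with ``$f$ is a local Joyal equivalence'' for presheaves of quasi-categories. The forward implication is exactly \Cref{lem3.8}. For the converse, a local Reedy equivalence is in particular a local complete Segal equivalence, since the latter model structure is a left Bousfield localization of the former (\Cref{def2.8}); hence \Cref{lem3.10}, which asserts that $t^{!}$ reflects local Joyal equivalences of presheaves of quasi-categories, yields that $f$ is a local Joyal equivalence. Chaining the three equivalences gives the lemma.

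I expect the main obstacle to be bookkeeping rather than genuine difficulty: one must verify that the comparison $k^{!}(Z) \rightarrow J(Z)$ of \Cref{lem1.11} is natural in maps of quasi-categories (so that the square above commutes) and may be applied sectionwise to $X^{\Delta^{n}}$ and $Y^{\Delta^{n}}$, and one must confirm that a local Reedy equivalence genuinely qualifies as the input required by \Cref{lem3.10}. Once these points are settled, the argument is a purely formal two-out-of-three and localization chase.
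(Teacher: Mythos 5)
Your proposal is correct and takes essentially the same route as the paper's own proof: the paper likewise uses the sectionwise trivial fibration $k^{!}(X^{\Delta^{n}}) \rightarrow J(X^{\Delta^{n}})$ of \Cref{lem1.11} together with the identification $t^{!}(f)_{n*} = k^{!}(f^{\Delta^{n}})$ to show the stated condition is equivalent to $t^{!}(f)$ being a local Reedy equivalence, and then concludes from \Cref{lem3.8,lem3.10}. You merely make explicit the two-out-of-three square and the step that a local Reedy equivalence is a local complete Segal equivalence, both of which the paper leaves implicit.
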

\begin{proof}
If $X$ is a presheaf of quasi-categories, then so is each $X^{\Delta^{n}}$. Also, there is a sectionwise weak equivalence 
\begin{equation*}
k^{!}(X^{\Delta^{n}}) \rightarrow J(X^{\Delta^{n}})
\end{equation*}
Thus, the condition is equivalent to saying that $t^{!}(f)$ is a local Reedy equivalence. The result follows from \ref{lem3.8} and \ref{lem3.10}.

\end{proof}

\begin{lemma}\label{lem4.6}
Let $X$ be a presheaf of quasi-categories. Then $X$ satisfies descent with respect to the local Joyal model structure if and only if $t^{!}(X)$ satisfies descent with respect to the local complete Segal model structure.
\end{lemma}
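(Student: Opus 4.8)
The plan is to route both descent conditions through the single bisimplicial presheaf $t^{!}(\mathcal{L}_{Joyal}(X))$, which I will argue serves simultaneously as a model for the local complete Segal fibrant replacement of $t^{!}(X)$ and as the $t^{!}$-image of the local Joyal fibrant replacement of $X$. Throughout I use that both $X$ and $\mathcal{L}_{Joyal}(X)$ are presheaves of quasi-categories (the latter because quasi-injective fibrant objects are such), and that $t^{!}$ is computed sectionwise, so that $t^{!}(X)(U) = t^{!}(X(U))$.

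First I would record that $t^{!}(\mathcal{L}_{Joyal}(X))$ is Segal-injective fibrant: since $\mathcal{L}_{Joyal}(X)$ is quasi-injective fibrant and $t^{!}$ is the right adjoint of the Quillen equivalence of \Cref{thm3.9}, it preserves fibrant objects. Next, the fibrant replacement map $X \to \mathcal{L}_{Joyal}(X)$ is a local Joyal equivalence of presheaves of quasi-categories, so by \Cref{lem3.8} the map $t^{!}(X) \to t^{!}(\mathcal{L}_{Joyal}(X))$ is a local Reedy equivalence, and hence a local complete Segal equivalence since the latter structure is a localization of the former. Thus $t^{!}(X) \to t^{!}(\mathcal{L}_{Joyal}(X))$ is a local complete Segal equivalence into a fibrant object, and lifting against the trivial cofibration $t^{!}(X) \to \mathcal{L}_{CSeg}(t^{!}(X))$ produces a comparison map $\mathcal{L}_{CSeg}(t^{!}(X)) \to t^{!}(\mathcal{L}_{Joyal}(X))$. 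By the two-out-of-three property this comparison is a local complete Segal equivalence between two Segal-injective fibrant objects; applying \Cref{lem2.7} over the site $\mathscr{C}/\Delta^{op}$ (under the identification of \Cref{exam2.3}, with the local complete Segal structure realised as the $\mathcal{F}$-local structure of \Cref{def2.8}) it is a sectionwise Reedy, and therefore a sectionwise complete Segal, equivalence.

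With this comparison in hand the equivalence becomes a formal chain. By \Cref{def4.1}, $X$ satisfies local Joyal descent exactly when $X \to \mathcal{L}_{Joyal}(X)$ is a sectionwise Joyal equivalence. As this is a map of presheaves of quasi-categories and $t^{!}$ in each section is the right adjoint of the sectionwise Quillen equivalence of \Cref{thm1.15}, which both preserves and reflects weak equivalences between fibrant objects, this holds if and only if $t^{!}(X) \to t^{!}(\mathcal{L}_{Joyal}(X))$ is a sectionwise complete Segal equivalence. Composing with the sectionwise comparison map above and using two-out-of-three, this is equivalent to $t^{!}(X) \to \mathcal{L}_{CSeg}(t^{!}(X))$ being a sectionwise complete Segal equivalence, i.e. to $t^{!}(X)$ satisfying local complete Segal descent.

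I expect the main obstacle to be the middle step: establishing that $t^{!}(\mathcal{L}_{Joyal}(X))$ genuinely is a complete Segal fibrant replacement of $t^{!}(X)$, and that the induced comparison of fibrant replacements is sectionwise rather than merely local. The first half combines \Cref{lem3.8} with preservation of fibrancy by the right adjoint, while the second rests on the rigidity of weak equivalences between fibrant objects provided by \Cref{lem2.7}. Some care is needed here, since \Cref{lem2.7} is phrased for a general $\mathcal{F}$-local structure on simplicial presheaves and must be transported across \Cref{exam2.3} to the bisimplicial setting, together with the observation that a sectionwise Reedy equivalence is in particular a sectionwise complete Segal equivalence.
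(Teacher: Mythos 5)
Your proof is correct and follows essentially the same route as the paper's: both identify $t^{!}(\mathcal{L}_{Joyal}(X))$ as a Segal-injective fibrant model of $t^{!}(X)$ (via \Cref{lem3.8} and right-Quillen-ness from \Cref{thm3.9}) and then transfer the descent condition using the fact that $t^{!}$ preserves and reflects sectionwise equivalences of presheaves of quasi-categories. The only difference is that you make explicit a step the paper leaves implicit — constructing the comparison map $\mathcal{L}_{CSeg}(t^{!}(X)) \to t^{!}(\mathcal{L}_{Joyal}(X))$ and invoking \Cref{lem2.7} to see it is sectionwise — which is a worthwhile clarification but not a different argument.
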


\begin{proof}
The map $t^{!}(X) \rightarrow t^{!} \mathcal{L}_{Joyal}(X)$ is a local complete Segal equivalence, and $t^{!}\mathcal{L}_{Joyal}(X)$ is fibrant for the local complete Segal model structure. In particular, $t^{!} \mathcal{L}_{Joyal}(X)$ is a fibrant model of $t^{!}(X)$ in the local complete Segal model structure. The result follows from the fact that $t^{!}$ preserves and reflects sectionwise equivalence of presheaves of quasi-categories. 
\end{proof}

\begin{theorem}\label{thm4.7}
Let $X$ be a presheaf of quasi-categories. Then $X$ satisfies descent in the local Joyal model structure if and only if each $J(X^{\Delta^{n}})$ satisfies descent with respect to the injective model structure.  
\end{theorem}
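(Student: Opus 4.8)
The plan is to chain together the descent comparisons already available, so as to pass from local Joyal descent of $X$ to injective descent of the vertical levels of $t^!(X)$, and then to identify those levels with the presheaves $J(X^{\Delta^n})$.

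First I would invoke \Cref{lem4.6}: since $X$ is a presheaf of quasi-categories, $X$ satisfies descent in the local Joyal model structure if and only if $t^!(X)$ satisfies descent in the local complete Segal model structure. Because $X$ is a presheaf of quasi-categories, the right adjoint $t^!$ of the Quillen equivalence of \Cref{thm1.15} carries each section $X(U)$ to a complete Segal space, so $t^!(X)$ is a presheaf of complete Segal spaces. Hence \Cref{lem4.4} applies: $t^!(X)$ satisfies descent in the local complete Segal structure if and only if it satisfies descent in the injective (local Reedy) structure.

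The key step is to detect injective descent of the bisimplicial presheaf $t^!(X)$ levelwise. Under the identification of \Cref{exam2.3}, the statement that the local Reedy fibrant replacement map $t^!(X) \to \mathcal{L}_{inj}(t^!(X))$ is a sectionwise weak equivalence is exactly the statement that each vertical map $t^!(X)_{n*} \to \mathcal{L}_{inj}(t^!(X))_{n*}$ is a sectionwise weak equivalence. By \Cref{lem4.3} every $\mathcal{L}_{inj}(t^!(X))_{n*}$ is injective fibrant, and by \Cref{rmk2.5} the fibrant replacement map is levelwise a local weak equivalence; thus $\mathcal{L}_{inj}(t^!(X))_{n*}$ is an injective fibrant model of $t^!(X)_{n*}$. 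Consequently $t^!(X)$ satisfies injective descent if and only if each $t^!(X)_{n*}$ satisfies injective descent.

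Finally, by equation \ref{eq 1} we have $t^!(X)_{n*} = k^!(X^{\Delta^n})$, and \Cref{lem1.11} provides a sectionwise weak equivalence $k^!(X^{\Delta^n}) \to J(X^{\Delta^n})$, since $X^{\Delta^n}$ is a presheaf of quasi-categories. Descent in the injective model structure is invariant under sectionwise weak equivalence: for sectionwise equivalent objects the induced map of injective fibrant replacements is a local weak equivalence of fibrant objects, hence sectionwise (cf. \Cref{lem2.7}), and a two-out-of-three argument in the naturality square for fibrant replacement then transfers the descent condition. Therefore $t^!(X)_{n*}$ satisfies descent if and only if $J(X^{\Delta^n})$ does, and combining the three reductions gives the theorem. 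I expect the middle step to be the main obstacle: one must use the \Cref{exam2.3} identification together with \Cref{lem4.3} to be sure that injective descent of the bisimplicial object is genuinely detected on each vertical level and that the fibrant replacement does not entangle different levels.
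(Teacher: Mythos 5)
Your proposal is correct and follows essentially the same route as the paper's proof: both arguments chain \Cref{lem4.6}, \Cref{lem4.4}, the levelwise detection of injective descent via \Cref{lem4.3} and the identification of \Cref{exam2.3}, and the identification $t^{!}(X)_{n*} = k^{!}(X^{\Delta^{n}}) \simeq J(X^{\Delta^{n}})$ coming from \ref{eq 1} and \Cref{lem1.11}. The only cosmetic difference is that you run the chain as a string of equivalences starting from $X$ (using $\mathcal{L}_{inj}(t^{!}(X))$ as the intermediate fibrant model), whereas the paper starts from the $J(X^{\Delta^{n}})$ side (using $\mathcal{L}_{CSeg}(t^{!}(X))$) and declares the converse similar.
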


\begin{proof}

If each $J(X^{\Delta^{n}})$ satisfies descent, then each $k^{!}(X^{\Delta^{n}})$ satisfies descent, because of the sectionwise weak equivalence $k^{!} \rightarrow J$. By \ref{lem4.3}, for $n \in \mathbb{N}$, $k^{!}(X^{\Delta^{n}}) = t^{!}(X)_{n*} \rightarrow \mathcal{L}_{CSeg} (t^{!}(X))_{n*}$ is an injective fibrant replacement (and a sectionwise weak equivalence). Therefore, $t^{!}(X)$ satisfies descent for the injective model structure. Conclude using \ref{lem4.4} and \ref{lem4.6}.

The proof of the converse is similar. 

\end{proof}

\begin{lemma}\label{lem4.8}
If $C$ is a category, then $JB(C) \cong B(Iso(C))$.
\end{lemma}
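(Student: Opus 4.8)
The plan is to exhibit both $JB(C)$ and $B(Iso(C))$ as sub-simplicial sets of the quasi-category $B(C)$ and to show that they coincide, using that $J$ assigns to a quasi-category its maximal sub-Kan complex (its core), as already invoked in \Cref{lem1.11}. Two standard facts about the nerve will be used throughout: that an $n$-simplex of $B(C)$ is the same thing as a functor $F \colon \textbf{n} \to C$ (full faithfulness of the nerve), and that a $1$-simplex of $B(C)$ is a quasi-isomorphism precisely when the corresponding morphism is an isomorphism of $C$, since the path category of $B(C)$ is canonically $C$ itself.

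First I would treat the inclusion $B(Iso(C)) \subseteq JB(C)$. The subcategory $Iso(C)$ is a groupoid, so its nerve $B(Iso(C))$ is a Kan complex, and the inclusion of categories $Iso(C) \subseteq C$ induces a monomorphism $B(Iso(C)) \hookrightarrow B(C)$. Thus $B(Iso(C))$ is a sub-Kan complex of $B(C)$, and since $JB(C)$ is the maximal such subobject, we obtain $B(Iso(C)) \subseteq JB(C)$.

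Next I would prove the reverse inclusion $JB(C) \subseteq B(Iso(C))$. Given an $n$-simplex of $JB(C)$, corresponding to a functor $F \colon \textbf{n} \to C$, each of its edges is a $1$-simplex of the Kan complex $JB(C)$ and is therefore a quasi-isomorphism; by the identification of the path category above, every $F(i \le j)$ is then an isomorphism of $C$. In particular the generating morphisms $F(i \le i+1)$ are isomorphisms, so $F$ factors through $Iso(C)$ and the simplex lies in $B(Iso(C))$. Combining the two inclusions gives $JB(C) = B(Iso(C))$ as sub-simplicial sets of $B(C)$, hence $JB(C) \cong B(Iso(C))$, naturally in $C$.

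The bookkeeping is routine; the only real content to pin down is the two standard facts about quasi-categories invoked above, namely that $J$ of a quasi-category is its maximal sub-Kan complex and that every edge of a Kan complex is a quasi-isomorphism (equivalently, that the fundamental category $\pi$ of a Kan complex is a groupoid). These are due to Joyal and can be cited from \cite{Joyal1}. I expect the main (and only minor) obstacle to be making precise the claim that a $1$-simplex of $B(C)$ is invertible if and only if the corresponding arrow of $C$ is an isomorphism, which follows from the natural identification of the path category of $B(C)$ with $C$.
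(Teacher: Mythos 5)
Your proof is correct, but it reaches the conclusion by a different route than the paper. The paper's proof is a one-sentence unwinding of the construction of $J$: since $J$ of a quasi-category consists by construction of those simplices all of whose edges become invertible in the path category, and since $PB(C) \cong C$, the $n$-simplices of $JB(C)$ are precisely the strings of invertible arrows of $C$, which are exactly the $n$-simplices of $B(Iso(C))$. You instead take the maximal-sub-Kan-complex characterization of $J$ as the starting point and establish the two inclusions separately, using that the nerve of a groupoid is a Kan complex (for $B(Iso(C)) \subseteq JB(C)$) and that every edge of a Kan complex is invertible in its fundamental category (for the reverse). This makes your argument robust to which of Joyal's equivalent descriptions of $J$ one adopts, at the price of invoking two additional standard facts that the paper's definition-level argument does not need. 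One small point in your reverse inclusion deserves attention: an edge of the Kan complex $JB(C)$ is a priori a quasi-isomorphism only in $JB(C)$, i.e.\ invertible in the path category of $JB(C)$; to conclude that the corresponding arrow is an isomorphism of $C \cong PB(C)$ you should apply the functor on path categories induced by the inclusion $JB(C) \subseteq B(C)$, which preserves isomorphisms --- or simply quote the construction of $J$ directly, as the paper does, under which edges of simplices of $JB(C)$ are by definition quasi-isomorphisms of $B(C)$.
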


\begin{proof}
By construction, the n-simplices of $JB(C)$ are precisely the strings $a_{1} \rightarrow \cdots \rightarrow a_{n}$ of invertible arrows in $PB(C) \cong C$. 
\end{proof}

\begin{corollary}\label{cor4.9}
Let $C$ be a presheaf of categories. Then $B(C)$ satisfies descent for the local Joyal model structure if and only if for each $n \in \mathbb{N}$, $Iso(C)^{[\textbf{n}]}$ is a stack.
\end{corollary}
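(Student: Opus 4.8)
The plan is to specialize \Cref{thm4.7} to $X = B(C)$ and then decategorify the resulting injective-descent condition into a statement about stacks. First I would note that $B(C)$ is a presheaf of quasi-categories, since the nerve of any category is a quasi-category; hence \Cref{thm4.7} applies and says that $B(C)$ satisfies descent for the local Joyal model structure if and only if each $J\big(B(C)^{\Delta^{n}}\big)$ satisfies descent for the injective model structure. So the whole problem is to understand these presheaves $J\big(B(C)^{\Delta^{n}}\big)$ concretely.

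The key computation is to identify the function complex $B(C)^{\Delta^{n}}$. Sectionwise one has $B(C)^{\Delta^{n}}(U) = \mathbf{hom}(\Delta^{n}, B(C(U)))$, and since the nerve is fully faithful, preserves products, and satisfies $\Delta^{n}\times\Delta^{m} \cong B([\mathbf{n}]\times[\mathbf{m}])$, its $m$-simplices are $hom_{\mathbf{Cat}}([\mathbf{n}]\times[\mathbf{m}], C(U)) \cong hom_{\mathbf{Cat}}([\mathbf{m}], \mathrm{Fun}([\mathbf{n}], C(U)))$. Thus $B(C)^{\Delta^{n}} \cong B(C^{[\mathbf{n}]})$ naturally in $U$, where $C^{[\mathbf{n}]}$ is the presheaf of functor categories $U \mapsto \mathrm{Fun}([\mathbf{n}], C(U))$. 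Applying \Cref{lem4.8} sectionwise then gives $J\big(B(C)^{\Delta^{n}}\big) \cong B(Iso(C^{[\mathbf{n}]}))$, the nerve of the maximal subgroupoid of the $n$-th functor category; this is the presheaf of groupoids denoted $Iso(C)^{[\mathbf{n}]}$ in the statement (the core of the functor category of $[\mathbf{n}]$-indexed strings in $C$ and natural isomorphisms between them).

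It remains to translate \emph{injective descent for the nerve of a presheaf of groupoids} into \emph{the stack condition}. For this I would invoke the standard dictionary of Hollander and Jardine: a map of presheaves of groupoids $G \to H$ is a local weak equivalence of nerves precisely when it induces isomorphisms on the sheaves $\pi_{0}$ and $\pi_{1}$, and a presheaf of groupoids $G$ is a stack precisely when the canonical map $G \to \tilde{G}$ to its stackification is a sectionwise equivalence. Since $B\tilde{G}$ is an injective-fibrant model of $BG$, the nerve $BG$ satisfies descent exactly when $G \to \tilde{G}$ is a sectionwise equivalence, i.e. exactly when $G$ is a stack. Combining this with the preceding identification, $B(C)$ satisfies local Joyal descent if and only if each $Iso(C)^{[\mathbf{n}]}$ is a stack.

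The main obstacle is this last step: the equivalence between injective descent for the nerve and the stack property is the only genuinely homotopy-theoretic input, and it must be cited or re-derived carefully (everything earlier is formal manipulation of nerves and function complexes). A secondary point to verify is the naturality of $B(C)^{\Delta^{n}} \cong B(C^{[\mathbf{n}]})$ and the fact that both $J$ and $(-)^{\Delta^{n}}$ may be computed sectionwise, so that \Cref{lem4.8} can legitimately be applied presheaf-wise to yield the claimed isomorphism of simplicial presheaves.
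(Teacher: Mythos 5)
Your proposal is correct and follows the same route as the paper's own (much terser) proof: apply \Cref{thm4.7} to $X = B(C)$, use the natural isomorphism $B(C)^{\Delta^{n}} \cong B(C^{[\textbf{n}]})$ together with \Cref{lem4.8} to identify $J\bigl(B(C)^{\Delta^{n}}\bigr)$ with $B(Iso(C^{[\textbf{n}]}))$, and then invoke the standard equivalence between injective descent for the nerve of a presheaf of groupoids and the stack condition. The only difference is one of exposition: you make explicit the nerve computation and the Hollander--Jardine stack dictionary, which the paper leaves implicit.
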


\begin{proof}
This follows from the preceding two results and the natural isomorphism $B(C)^{\Delta^{n}} = B(C)^{B([\textbf{n}])} \cong B(C^{[\textbf{n}]})$.
\end{proof}

\begin{theorem}\label{thm4.10}
Let $X$ be a presheaf of quasi-categories. Then one has a bijection
$[*, J(X)] = [*, X]_{q}$. Here, $[\, , \, ]_{q}$ denotes maps in the local Joyal homotopy category and $[\, , \,]$ denotes maps in the ordinary homotopy category on simplicial presheaves. 
 \end{theorem}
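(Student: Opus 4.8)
The plan is to realize the core functor $J$ as the total right derived functor of a Quillen adjunction between the injective and local Joyal model structures, so that the asserted bijection becomes a direct instance of the induced adjunction on homotopy categories. The key observation is that, sectionwise, \Cref{lem1.11} exhibits $k^{!}(X) \to J(X)$ as a trivial fibration for every quasi-category $X$, so $k^{!}$ is a model for the core; the task is to promote this to a statement about presheaves and homotopy categories.

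First I would upgrade the (objectwise) adjunction $k_{!} \dashv k^{!}$ to a Quillen adjunction
$$
k_{!} : s\textbf{Pre}(\mathscr{C})_{inj} \leftrightarrows s\textbf{Pre}(\mathscr{C})_{Joyal} : k^{!}
$$
between the injective model structure and the local Joyal model structure. Since the cofibrations of both model structures are exactly the monomorphisms, and $k_{!}$ preserves monomorphisms by \Cref{lem1.10} applied sectionwise, $k_{!}$ preserves cofibrations; and since $k_{!}$ carries local weak equivalences to local Joyal equivalences by \Cref{lem3.2}, it carries injective trivial cofibrations to local Joyal trivial cofibrations. By the standard criterion this makes $k_{!}$ left Quillen. (Here we use that every object, in particular $*$, is cofibrant in both structures.)

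Next I would evaluate the derived adjunction on the source object $*$. Because $*$ is cofibrant and $k_{!}(*) = B\pi(\Delta^{0}) = *$, and because $\mathcal{L}_{Joyal}(X)$ is a local Joyal fibrant replacement of $X$, the derived adjunction furnishes natural bijections
$$
[*, X]_{q} = [*, \mathcal{L}_{Joyal}(X)]_{q} = [k_{!}(*), \mathcal{L}_{Joyal}(X)]_{q} \cong [*, k^{!}(\mathcal{L}_{Joyal}(X))].
$$
It then remains to identify $k^{!}(\mathcal{L}_{Joyal}(X))$ with $J(X)$ in the ordinary homotopy category. As $\mathcal{L}_{Joyal}(X)$ is a presheaf of quasi-categories, \Cref{lem1.11} gives a sectionwise, hence local, trivial fibration $k^{!}(\mathcal{L}_{Joyal}(X)) \to J(\mathcal{L}_{Joyal}(X))$; and as $X \to \mathcal{L}_{Joyal}(X)$ is a local Joyal equivalence of presheaves of quasi-categories, the case $n = 0$ of \Cref{lem4.5} shows that $J(X) \to J(\mathcal{L}_{Joyal}(X))$ is a local weak equivalence. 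Combining these two local weak equivalences, $k^{!}(\mathcal{L}_{Joyal}(X))$ and $J(X)$ are isomorphic in the ordinary homotopy category, whence $[*, k^{!}(\mathcal{L}_{Joyal}(X))] = [*, J(X)]$ and the theorem follows.

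I expect the main point to be conceptual rather than computational: recognizing that $\mathbb{R}k^{!}$ models the core functor $J$ on presheaves of quasi-categories. Once the Quillen adjunction is in place this identification is the heart of the matter, and it rests essentially on \Cref{lem4.5}, which guarantees that $J$ is homotopy invariant for local Joyal equivalences. A secondary technical point to check is that the objectwise $k_{!} \dashv k^{!}$ is genuinely a Quillen adjunction for these two (in the Joyal case, non-simplicial) model structures; the only inputs required are \Cref{lem1.10} and \Cref{lem3.2}.
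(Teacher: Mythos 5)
Your proof is correct, but it follows a genuinely different route from the paper's. The paper never sets up a Quillen adjunction between the injective and local Joyal model structures: it argues directly with homotopy classes, using that the constant presheaf $I = B\pi(\Delta^{1})$ is an interval object for the local Joyal structure, that maps and $I$-homotopies into $\mathcal{L}_{Joyal}(X)$ factor through $J\mathcal{L}_{Joyal}(X)$ by \cite[Corollary 1.6]{Joyal1}, and that $J\mathcal{L}_{Joyal}(X)$ satisfies injective descent (via \Cref{thm4.7}), so that $I$-homotopy classes can be traded for $\Delta^{1}$-homotopy classes and hence for $[*, J(X)]$. You instead promote $k_{!} \dashv k^{!}$ to a Quillen adjunction from the injective to the local Joyal structure (legitimate: cofibrations are monomorphisms on both sides, \Cref{lem1.10} applied sectionwise handles cofibrations, and \Cref{lem3.2} handles trivial cofibrations), apply the derived adjunction to the cofibrant object $*$ (noting $k_{!}(*) = B\pi(\Delta^{0}) = *$) and the fibrant object $\mathcal{L}_{Joyal}(X)$, and then identify $k^{!}\mathcal{L}_{Joyal}(X)$ with $J(X)$ in the injective homotopy category through the zig-zag of local weak equivalences $k^{!}\mathcal{L}_{Joyal}(X) \rightarrow J\mathcal{L}_{Joyal}(X) \leftarrow J(X)$, supplied respectively by \Cref{lem1.11} applied sectionwise (fibrant objects of the local Joyal structure are presheaves of quasi-categories) and by the $n=0$ case of \Cref{lem4.5}. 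Every step checks out, and nothing you invoke depends on \Cref{thm4.10} itself, so there is no circularity. What your approach buys is a stronger and more conceptual statement: since every simplicial presheaf is cofibrant, the identical argument gives a natural bijection $[K, J(X)] \cong [k_{!}(K), X]_{q}$ for arbitrary $K$, exhibiting $J$ (on presheaves of quasi-categories) as the total right derived functor $\mathbb{R}k^{!}$ of the new Quillen pair; the theorem is the case $K = *$. What the paper's approach buys is economy and fit: it requires no new Quillen adjunction, only the descent result \Cref{thm4.7} proved immediately beforehand together with elementary manipulations of homotopy classes, which is why the argument sits naturally in Section 4.
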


\begin{proof}
The constant simplicial presheaf $I = B \pi (\Delta^{1})$ is a interval object for the local Joyal model structure. Furthermore, every map $I \rightarrow X$ factors through $J(X)$ by \cite[Corollary 1.6]{Joyal1}. Since $\mathcal{L}_{Joyal}(X)$ satisfies descent, we have
\begin{equation*}
[*, X]_{q}  \cong [*, \mathcal{L}_{Joyal}(X)] \cong \pi_{I}(*, \mathcal{L}_{Joyal}(X)) \cong \pi_{I}(*, J\mathcal{L}_{Joyal}(X)) 
\end{equation*}
where $\pi_{I}(A, B)$ denotes the $I$-homotopy classes of maps. The presheaf $J\mathcal{L}_{Joyal}(X)$ satisfies descent with respect to the injective model structure. The constant simplicial presheaf map $\Delta^{1} \rightarrow I$ is a trivial cofibration in the injective model structure so we have
\begin{equation*}
\pi_{I}(*, J\mathcal{L}_{Joyal}(X))  \cong \pi_{\Delta^{1}}(*, J\mathcal{L}_{Joyal}(X)) \cong [*, J(X)] 
\end{equation*}
as required.
\end{proof}

\begin{example}\label{exam4.11}
If $A$ is a presheaf of categories, one has an identification $[*, BA]_{q} = [*, B(Iso(A))]$. In particular, \cite[Corollary 9.15]{local} implies that $[*, BA]_{q}$ is a non-abelian $H^{1}$ invariant. 
\end{example}
\bibliographystyle{amsplain}
\bibliography{database22}

\providecommand{\bysame}{\leavevmode\hbox to3em{\hrulefill}\thinspace}
\providecommand{\MR}{\relax\ifhmode\unskip\space\fi MR }
\providecommand{\MRhref}[2]{%
  \href{http://www.ams.org/mathscinet-getitem?mr=#1}{#2}
}
\providecommand{\href}[2]{#2}
\begin{thebibliography}{10}

\bibitem{GJ2}
P.~G. Goerss and J.~F. Jardine, \emph{Simplicial homotopy theory}, Modern
  Birkh\"auser Classics, Birkh\"auser Verlag, Basel, 2009, Reprint of the 1999
  edition. \MR{2840650}

\bibitem{Hirschorn}
Phillip~S. Hirschorn, \emph{Model categories and their localizations},
  Mathematical Surveys and Monographs, American Mathematical Society,
  Providence R.I., 2003.

\bibitem{fibred-sites}
J.~F. Jardine, \emph{Fibred sites and stack cohomology}, Math Z. (2006),
  811--836.

\bibitem{local}
\bysame, \emph{Local homotopy theory}, Springer monographs in mathematics,
  Springer-Verlag, New York, 2015.

\bibitem{Joyal1}
A.~Joyal, \emph{Quasi-categories and {K}an complexes}, JPAA \textbf{175}
  (2002), 207--222.

\bibitem{JT1}
Andre Joyal and Myles Tierney, \emph{Quasi-categories vs. {S}egal spaces},
  Categories in Algebra, Geometry and Mathematical Physics, Contemporary
  Mathematics, vol. 431, American Mathematical Society, 2007, pp.~277--326.

\bibitem{Lurie}
Jacob Lurie, \emph{Higher topos theory}, Annals of Mathematics Studies,
  Princeton University Press, Princeton and Oxford, 2009.

\bibitem{Nick3}
N.~J. Meadows, \emph{Cocycles in local higher category theory}, Preprint,
  \url{https://arxiv.org/abs/1802.06838}, 2018, submitted to JPAA.

\bibitem{Nick}
Nicholas~J. Meadows, \emph{The local {J}oyal model structure}, TAC \textbf{31}
  (2016), no.~24, 690--711.

\bibitem{Rezk}
Charles Rezk, \emph{A model for the homotopy theory of homotopy theories},
  Transactions of the AMS \textbf{353} (2001).

\bibitem{Simpson-Descent}
Carlos~T. Simpson, \emph{Descent}, Alexandre {G}rothendieck: a mathematical
  portrait, Int. Press, Somerville, MA, 2014, pp.~83--141. \MR{3287695}

\end{thebibliography}

\end{document}